\numberwithin{equation}{section}
\theoremstyle{plain}
\newtheorem{Th}{Theorem}[section]
\newtheorem{Lemma}[Th]{Lemma}
\newtheorem{Cor}[Th]{Corollary}
\newtheorem{Prop}[Th]{Proposition}
 \theoremstyle{definition}
\newtheorem{Def}[Th]{Definition}
\newtheorem*{Def*}{Definition}
\newtheorem{Rem}[Th]{Remark}
\newtheorem*{Rem*}{Remark}
\newtheorem{?}[Th]{Problem}
\newcommand{\E}{\mathbb{E} }
\renewcommand{\P}{\mathbb{P}}
\newcommand{\R}{\mathbb{R}}
\newcommand{\N}{\mathbb{N}}
\renewcommand{\d}{\mathrm{d}}
\newcommand{\la}{\langle}\newcommand{\ra}{\rangle}
\newcommand{\eps}{\epsilon}
\newcommand{\bF}{\overline{F}}
\renewcommand{\div}{\nabla \cdot}
\renewcommand{\H}{\mathsf{H}}
\newcommand{\tx}{\tilde{x}}
\newcommand{\bY}{\overline{Y}}
\newcommand{\ii}{\mathbf{i}}
\newcommand{\jj}{\mathbf{j}}
\renewcommand{\S}{\mathbf{S}}
\newcommand{\sym}{\mathsf{sym}}
\newcommand{\SM}{\S^K_{+,  M}}
\newcommand{\cK}{\mathcal{K}}
\newcommand{\bcK}{\overline{\cK}}
\newcommand{\cL}{\mathcal{L}}
\newcommand{\cD}{\mathcal{D}}
\newcommand{\tphi}{\widetilde{\phi}}
\newcommand{\bFN}{\bF_N}
\newcommand{\tN}{t_N}
\newcommand{\hN}{h_N}
\newcommand{\cS}{\mathcal{S}}
\newcommand{\tr}{\mathsf{tr}}
\newcommand{\nn}{\mathbf{n}}
\newcommand{\cP}{\mathcal{P}}
\newcommand{\cA}{\mathcal{A}}
\newcommand{\cC}{\mathcal{C}}
\newcommand{\itr}{\mathsf{int}\,}
\newcommand{\cl}{\mathsf{cl}\,}
\newcommand{\bd}{\mathsf{bd}\,}
\newcommand{\conv}{\mathsf{conv}\,}
\newcommand{\rank}{\mathsf{rank}}
\newcommand{\zero}{\mathbf{0}}
\newcommand{\diag}{\mathsf{diag}}
\begin{document}

\author[Hong-Bin Chen]{Hong-Bin Chen}
\address[Hong-Bin Chen]{Courant Institute of Mathematical Sciences, New York University, New York, New York, USA}
\email{hbchen@cims.nyu.edu}

\author[Jiaming Xia]{Jiaming Xia}
\address[Jiaming Xia]{Department of Mathematics, University of Pennsylvania, Philadelphia, Pennsylvania, USA}
\email{xiajiam@sas.upenn.edu}

\keywords{inference problem, Hamilton-Jacobi equation, tensor}
\subjclass[2010]{82B44, 82D30}

\title{Hamilton--Jacobi equations for inference of matrix tensor products}

\begin{abstract}
We study the high-dimensional limit of the free energy associated with the inference problem of finite-rank matrix tensor products. In general, we bound the limit from above by the unique solution to a certain Hamilton--Jacobi equation. Under additional assumptions on the nonlinearity in the equation which is determined explicitly by the model, we identify the limit with the solution. Two notions of solutions, weak solutions and viscosity solutions, are considered, each of which has its own advantages and requires different treatments. For concreteness, we apply our results to a model with i.i.d.\ entries and symmetric interactions. In particular, for the first order and even order tensor products, we identify the limit and obtain estimates on convergence rates; for other odd orders, upper bounds are obtained.
\end{abstract}
\maketitle

\section{Introduction}
Tensor factorizations or tensor decompositions play important roles in numerous applications.  In this work, we study the inference problem of estimating tensor products of matrices. Let us first describe the model we are concerned with. Fix $K\in\N$ and let $P^X_N$ be the law of $X\in \R^{N\times K}$, where $N\in\N$ will be sent to $\infty$. 
For a fixed $L\in\N$, we observe
\begin{align}\label{eq:observ}
    Y= \sqrt{\frac{2t}{N^{p-1}}}X^{\otimes p}A +W \ \  \in \R^{N^p\times L}.
\end{align}
where $t\geq0$ is interpreted as the signal-to-noise ratio;  $\otimes$ is the Kronecker product (hence $X^{\otimes p}\in\R^{N^p\times K^p}$); $A\in\R^{K^p\times L}$ is a deterministic matrix; and $W\in \R^{N^p\times L}$ consists of independent standard Gaussian entries.

\smallskip

The inference task is to recover the information of $X$ based on the observation of $Y$. Hence, we investigate the law of $X$ conditioned on observing $Y$. Bayes' rule gives that, for any bounded measurable $g:\R^{N\times K}\to\R$, we have
\begin{align*}
    \E\big[g(X)\big|Y\big] = \frac{\int_{\R^{N\times K}}g(x)e^{H^\circ_N(t,x)}P^X_N(\d x)}{\int_{\R^{N\times K}}e^{H^\circ_N(t,x)}P^X_N(\d x)}.
\end{align*}
Here the Hamiltonian associated with this model is given by
\begin{align}\label{eq:H^circ}
    H^\circ_N(t,x) = \sqrt{\frac{2t}{N^{p-1}}}(x^{\otimes p}A) \cdot Y - \frac{t}{N^{p-1}}|x^{\otimes p}A|^2.
\end{align}
Throughout this paper, the dot product between two tensors, matrices or vectors of the same size is the entry-wise inner product. We denote by $|\cdot|$ the associated norm.
The goal is to understand the high-dimensional limit as $N\to\infty$ of the free energy
\begin{align*}
    \E F^\circ _N(t)= \frac{1}{N}\E\log \int_{\R^{N\times K}} e^{H^\circ_N(t,x)}P^X_N(\d x).
\end{align*}

\bigskip

We briefly discuss the generality of the model \eqref{eq:observ} and its relation to other models involving the inference of matrix products. Among the ones widely studied are the models concerning the second order products. The inference problem of nonsymmetric matrices (or the spiked Wishart model) is given by $Y = \sqrt{\frac{2t}{N}}X_1X_2^\intercal+W$. Works investigating this model include \cite{miolane2017fundamental,barbier2017layered,barbier2019adaptive,kadmon2018statistical,luneau2020high,chen2020hamilton}. When $X_1=X_2$, this becomes the inference problem of symmetric matrices (or the spiked Wigner model), which is studied in \cite{lelarge2019fundamental,dia2016mutual,mourrat2018hamilton,mourrat2019hamilton}. A generalization of these spiked matrix models can be seen in the study of community detection problems and the stochastic block models. In certain settings, the community detection problem is asymptotically equivalent to $Y= \sqrt{\frac{2t}{N}}XBX^\intercal +W$ where $B$ is deterministic and models the community interactions (see \cite{reeves2019geometryIEEE,reeves2019geometry}). More generally, the community detection with several correlated networks is asymptotically equivalent to the multiview spiked matrix model $Y_l= \sqrt{\frac{2t}{N}}XB_lX^\intercal +W_l$ for $l=1,2,\dots,L$ where each $B_l$ reflects one network (see \cite{mayya2019mutualIEEE,mayya2019mutual}). All of these second order models can be represented in the form of $Y=\sqrt{\frac{2t}{N}}X^{\otimes 2}\sqrt{S}+W$ where $S$ is a positive semidefinite matrix. This model is studied in \cite{reeves2020information}, and its equivalence to the models above is discussed in more details therein. Hence, the models so far mentioned can be seen as special cases of \eqref{eq:observ} for $p=2$. In Appendix~\ref{section:apdx_nonsym}, we will demonstrate the representation of the nonsymmetric matrix inference problem into the form of \eqref{eq:observ}. Higher order cases ($p\geq 2$) include $Y=\sqrt{\frac{2t}{N^{p-1}}}X^{\otimes p}+W$ with vector $X \in \R^N$ in \cite{barbier2019adaptive,mourrat2018hamilton}, and $Y=\sqrt{\frac{2t}{N^{p-1}}}\sum_{k=1}^rX_k^{\otimes p}+W$ with each vector $X_k \in \R^N$ in \cite{lesieur2017statistical}. The model \eqref{eq:special} studied in \cite{luneau2019mutual} and considered in Section~\ref{section:special} as a special case also belongs to this class. Again, they can be viewed as special cases of \eqref{eq:observ}.

\smallskip

Recently, the powerful method of adaptive interpolations was introduced in \cite{barbier2019adaptive}. This technique and its improvements have been employed in works including \cite{barbier2019optimal,luneau2020high,reeves2020information}. 
In this work, we follow the approach via Hamilton--Jacobi equations set forth in \cite{mourrat2018hamilton,mourrat2019hamilton,mourrat2019parisi,mourrat2020extending,mourrat2020nonconvex,mourrat2020free}. Let $F_N(t,h)$ be the free energy corresponding to an enriched version of the Hamiltonian \eqref{eq:H^circ}. Here $h$ is an additional variable and the original free energy satisfies $F^\circ_N(t)=F_N(t,0)$. We seek to compare the limit of $\E F_N(t,h)$ as $N\to \infty$ with the solution of the following Hamilton--Jacobi equation
\begin{align*}
    \big(\partial_t f -\H(\nabla f)\big)(t,h)=0.
\end{align*}
Here the nonlinearity $\H$ is given by a simple formula \eqref{eq:H} in terms of the interaction matrix $A$ in \eqref{eq:observ}. To make sense of solutions of this equation and the convergence, two notions have been explored. The notion of viscosity solutions of Hamilton--Jacobi equations was initially adopted to study convergence of free energies in \cite{mourrat2018hamilton} and later the notion of weak solutions was taken in \cite{mourrat2019hamilton}. Viscosity solutions are in general heavier to handle. Bounds from two sides require different treatments, and often one side is much easier than the other and requires weaker assumptions. The convergence happens in the local $L^\infty_tL^\infty_h$ topology while it takes considerable effort to obtain convergence rates. On the other hand, weak solutions are simpler and it is easier to obtain estimates on convergence rates, although the convergence takes place in local $L^\infty_tL^1_h$. It can be upgraded to estimates in $L^\infty_tL^\infty_h$ by giving up some powers (see Remark~\ref{Rem:infinity_norm}). A more detailed comparison of these two notions of solutions can be found in \cite[Section 2]{mourrat2019hamilton}.

\smallskip

We utilize both notions in this work. For any interaction matrix $A$ (equivalently, for any $\H$ of the form \eqref{eq:H}), we obtain an upper bound on the limit of the free energy in Theorem~\ref{thm:lower} via viscosity solutions. This theorem also gives the corresponding lower bound under an additional assumption that $\H$ is convex. Employing weak solutions as in Theorem~\ref{thm:general_cvg_bF}, we obtain convergence and estimates on convergence rates under an assumption on $\H$ which is weaker than convexity. 

\smallskip

We emphasize that, different from the usual approach in statistical mechanics, the existence of a variational formula for the limit of free energies is not \textit{a priori} needed in our approach. Instead, the existence of solutions to the Hamilton--Jacobi equation is sufficient. In the weak solution approach, we prove the existence in a straightforward manner by verifying that the free energies form a Cauchy sequence. For viscosity solutions, there are classical tools to ensure existence. Here, we prove that the Hopf formula is a viscosity solution as a useful fact (see Remark~\ref{Rem:var_formula}), and simply use this to furnish the existence for convenience.

\smallskip

The rest of the paper is organized as follows. We describe the setting and state main results in Section~\ref{section:setting_main}. We apply these results to a special case where $X$ has i.i.d.\ entries and the interaction is symmetric in Section~\ref{section:special}. In Section~\ref{section:approx_HJ}, we show that the free energy satisfies an approximate Hamilton--Jacobi equation and collect some basic results of the derivatives of the free energy. Section~\ref{section:weak_sol} gives the precise definition of weak solutions and the uniqueness of solutions. In Section~\ref{section:cvg_weak}, we show the convergence of the free energy to a weak solution, and finish the proof of Theorem~\ref{thm:general_cvg_bF}. The definition of viscosity solutions and the corresponding well-posedness results are in Section~\ref{section:vis_sol}. The ensuing Section~\ref{section:cvg_vis_sol} studies the convergence of the free energy to the viscosity solution and proves Theorem~\ref{thm:lower}. A special version of the Fenchel--Moreau biconjugation theorem on the set of positive semidefinite matrices is needed to analyze the Hopf formula. It is stated and proved in Appendix~\ref{section:biconjugation}.

\subsection*{Acknowledgement} We warmly thank Jean--Christophe Mourrat for many helpful comments and discussions.

\section{Setting and Main results}\label{section:setting_main}

\subsection{Setting}
 We assume that the random matrix $X\in\R^{N\times K}$ in \eqref{eq:observ} satisfies
\begin{align}\label{eq:support}
    |X|\leq \sqrt{NK}.
\end{align}
For convenience, we use the shorthand notation
\begin{align}\label{eq:tx}
    \tx = x^{\otimes p}A,\quad\forall  x\in\R^{N\times K}.
\end{align}
We enrich the Hamiltonian \eqref{eq:H^circ} by introducing
\begin{align}\label{eq:H_N_enriched}
\begin{split}
    H_N(t,h,x) &= \sqrt{\frac{2t}{N^{p-1}}}\tx \cdot Y - \frac{t}{N^{p-1}}|\tx|^2 \\
    &\quad+\sqrt{2h}\cdot (x^\intercal \bY) - h\cdot (x^\intercal x).
\end{split}
\end{align}
Here $\overline{Y}=X\sqrt{2h}+Z$, where $h\in\S^K_+$, the set of $K\times K$ (symmetric) positive semi-definite matrices, and entries of $Z\in\R^{N\times K}$ are independent standard Gaussian variables. This Hamiltonian $H_N$ is associated with the law of $X$ conditioned on observing both $Y$ and $\overline{Y}$.
The corresponding free energy is given by 
\begin{align}\label{eq:def_F_N}
    F_N(t,h) =\frac{1}{N} \log \int_{\R^{N\times K}} e^{H_N(t,h,x)}P^X_N(\d x).
\end{align}
Let $\bF_N(t,h) = \E F_N(t,h)$ be its expectation.

\smallskip

Set $\R_+=[0,\infty)$. We consider the Hamilton--Jacobi equation
\begin{align}\label{eq:HJ_eqn}
    \partial_t f - \H(\nabla f) =0, \quad \text{in } \R_+ \times \S^K_+
\end{align}
where $\H:\S^K_+\to \R$ is given by
\begin{align}\label{eq:H}
    \H(q) = \big(AA^\intercal\big)\cdot q^{\otimes p},\quad \forall q\in\S^K_+.
\end{align}

\subsection{Main results}
To state the results, we need more notation. Let us introduce
\begin{align}\label{eq:def_SM}
    \SM = \big\{ h \in \S^K_+:\ |h|\leq M\big\}.
\end{align}
We also denote the set of $K\times K$ symmetric matrices by $\S^K$, and the set of $K\times K$ symmetric positive definite matrices by $\S^K_{++}$.
For $N\in \N$ and $M>0$, we define
\begin{align}\label{eq:def_K}
    \cK_{M,N} = \bigg(\E \sup_{(t,h)\in [0,M]\times \SM}\big| F_N - \bF_N|^2\bigg)^\frac{1}{2},
\end{align}
and for any function $\psi:\S^K_+\to \R$,
\begin{align}\label{eq:def_L}
    \cL_{\psi,M,N}=\sup_{h\in \SM}\big|\bF_N(0,h ) -\psi(h)\big|.
\end{align}
The quantity $\cK_{M,N}$ measures the concentration of $F_N$. Many tools are available to estimate this. In view of \eqref{eq:H_N_enriched} and \eqref{eq:def_F_N}, we can recast $\bF_N(0,h)$ as the free energy corresponding to a decoupled system (inference of $X$ based on the observation of $\overline{Y}$ with $\overline{Y}$ in \eqref{eq:H_N_enriched}). Hence, $\cL_{\psi,M,N}$ is also a relatively simple object to analyze.

\smallskip

Throughout, the gradient $\nabla$ is taken in the space variable $h\in \S^K_+$ (sometimes written as $x\in\S^K_+$). To avoid confusion when multiple $\nabla$ are present, we specifically denote the differential of $\H$ by $\cD\H$. We identify $\S^K$ with $\R^{K(K+1)/2}$ in an isometric way (see \eqref{eq:orthonormal_basis}) and endow it with the Lebesgue measure. Let $\cA$ be the set of real-valued nondecreasing, Lipschitz and convex functions on $\S^K_+$. Here a function $u:\S^K_+\rightarrow \R$ is said to be nondecreasing provided 
\begin{align}\label{eq:nondecreasing}
    u(a)\geq u(b),\quad\text{ if } a-b\in\S^K_+. 
\end{align}
We define
\begin{align}\label{eq:A_H}
    \cA_\H=\Big\{\phi\in\cA:\  \nabla\cdot \big(\cD\H(\nabla\phi)\big)\geq 0 \Big\},
\end{align}
where the inequality is understood in the sense of distribution, namely $\int \cD\H(\nabla\phi)\cdot \nabla \eta \leq 0$, for all nonnegative smooth function $\eta$ compactly supported on $\S^K_{++}$.

\smallskip

Before stating the theorems, we comment that the assumptions imposed in them are threefold. The first part is on the concentration, namely, the quantity $\cK_{M,N}$. The second part is on $\bF_N(0,\cdot)$ or $\cL_{\psi,M,N}$, which is about the convergence of the free energy in the aforementioned decoupled system. The third part is on $\H$ (equivalently on $A$ due to \eqref{eq:H}) or, further, on $\cA_\H$. 

\begin{Th}\label{thm:general_cvg_bF}
Let $p\in\N$. Suppose
\begin{itemize}
    \item $\sup_{M\geq 1, N\in\N}(\cK_{M,N}/M^\beta)<\infty$ for some $\beta>0$, and $\lim_{N\to\infty} \cK_{M,N}=0$ for each $M\geq 1$;
    
    \item there is a function $\psi:\S^K_+\to \R$ such that $\lim_{N\to\infty}\cL_{\psi,M,N}=0$ for each $M\geq 1$;

\item $\cA_\H$ is convex and $\bF_N(t,\cdot)\in\cA_\H$ for all $t\geq 0$ and $N\in\N$.
\end{itemize}

Then there is a unique weak solution $f$ to \eqref{eq:HJ_eqn} with $f(0,\cdot)=\psi$, and there is a constant $C>0$ such that the following holds for all $M\geq 1$ and all $N\in\N$:
\begin{align}\label{eq:cvg_rate}
    \sup_{t\in[0,M]}\int_{\S^K_{+,M}}\big|\bF_N(t,h)-f(t,h)\big|\d h \leq CM^\alpha\Big(\cL_{\psi,CM,N}+N^{-\frac{1}{14}}+\big(\cK_{CM,N}/M^\beta\big)^\frac{2}{7}\Big),
\end{align}
where $\alpha = \frac{K(K+1)}{2}+\frac{\beta\vee1}{2}+1$.

\end{Th}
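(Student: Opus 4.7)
First I would invoke from Section~\ref{section:approx_HJ} the approximate equation
\begin{equation*}
\partial_t \bF_N(t,h) = \H(\nabla \bF_N(t,h)) + r_N(t,h),
\end{equation*}
where the residual $r_N$, when tested against a smooth function supported in $\S^K_{+,M}$, is controlled by a combination of $\cK_{M,N}$ (via the standard variance-to-error conversion through Gaussian integration by parts) and an $N^{-1/2}$-type error coming from the a priori Lipschitz bound on $F_N$ in the variable $h$. In parallel I would use the comparison/uniqueness machinery for weak solutions of \eqref{eq:HJ_eqn} to be developed in Section~\ref{section:weak_sol}. The hypotheses that $\cA_\H$ is convex and that $\bF_N(t,\cdot)\in\cA_\H$ play the role usually served by convexity of $\H$: they yield, for any weak solution $v$, an $L^1$-type contraction of the form
\begin{equation*}
\sup_{t\in[0,M]}\int_{\S^K_{+,M}} |\bF_N(t,h)-v(t,h)|\,\d h \,\leq\, C\int_{\S^K_{+,CM}} |\bF_N(0,h)-v(0,h)|\,\d h \,+\, (\text{eq.\ residual}),
\end{equation*}
where the slight enlargement $M\mapsto CM$ accounts for finite speed of propagation and comes from the Lipschitz bound on $\cD\H$ on the relevant range of gradients.

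Applying this contraction to the pair $(\bF_N,\bF_{N'})$ and letting $N,N'\to\infty$ shows that $\{\bF_N\}$ is Cauchy in $L^\infty_tL^1_h$: the initial-data difference is controlled by $\cL_{\psi,CM,N}+\cL_{\psi,CM,N'}$, and the two equation residuals vanish by the first hypothesis. The Cauchy limit $f$ is then verified to be the unique weak solution with $f(0,\cdot)=\psi$; uniqueness is another immediate consequence of the same contraction, and the initial condition is inherited from $\cL_{\psi,M,N}\to 0$.

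To obtain the quantitative rate \eqref{eq:cvg_rate}, I would apply the contraction estimate directly between $\bF_N$ and $f$ and insert an additional regularization in $h$ at a scale $\eps>0$ to be optimized. Three kinds of terms then compete: a mollification error proportional to $\eps\cdot M^\alpha$ (the Lipschitz constant of $\bF_N$ in $h$ times the Lebesgue measure of $\S^K_{+,M}$), an equation-error contribution of the schematic form $M^\alpha\bigl((\cK_{CM,N}/M^\beta)+N^{-c}\bigr)/\eps^{r}$ produced by pairing the weak formulation with derivatives of the mollifier, and the initial-data contribution $M^\alpha\cL_{\psi,CM,N}$. Balancing the first two contributions and optimizing over $\eps$ produces the fractional exponents $\tfrac{2}{7}$ and $\tfrac{1}{14}$ in \eqref{eq:cvg_rate}; the prefactor $M^\alpha$ with $\alpha=\tfrac{K(K+1)}{2}+\tfrac{\beta\vee 1}{2}+1$ records the dimension $\tfrac{K(K+1)}{2}$ of $\S^K$, the growth in $M$ of the Lipschitz constant of $\bF_N$ (of order $M^{\beta\vee 1}$), and the enlargement factor $CM/M$ used for finite propagation.

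The principal obstacle is the contraction estimate itself. Since $\H$ is not assumed convex, one cannot invoke a standard Hopf-formula duality argument and must instead exploit the structural inequality $\nabla\cdot\cD\H(\nabla\phi)\geq 0$ defining $\cA_\H$ together with the convexity of that class. Concretely, the convexity of $\cA_\H$ enters through interpolating test functions between $\bF_N$ and $v$, while the divergence condition is what converts the cross term $\H(\nabla\bF_N)-\H(\nabla v)$, after integration by parts against the interpolated function, into a sign-definite quantity that can absorb $\partial_t(\bF_N-v)$. Setting up this one-sided bound rigorously and then propagating the exponents cleanly through the subsequent mollification optimization are the two most delicate pieces of book-keeping in the entire argument.
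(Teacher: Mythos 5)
Your proposal follows the same high-level architecture as the paper's proof: an $L^1$-type Gronwall/contraction estimate for the difference of two functions in $\cA_\H$ (Lemma~\ref{lemma:weak_comp}), with the hypotheses on $\cA_\H$ entering exactly as you describe — the interpolated gradient $s\nabla f + (1-s)\nabla g$ must land back in $\cA_\H$ (hence the convexity of $\cA_\H$), and the divergence condition gives the sign-definite term. The Cauchy-sequence argument for existence (Proposition~\ref{prop:existence}) and the quantitative bound obtained by optimizing a small parameter (Proposition~\ref{prop:cvg_assume_existence}) are also what the paper does. The finite-propagation enlargement $M\mapsto CM$ via $R=\sup|\cD\H|$ is likewise correct.

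Where the sketch diverges from, and understates the difficulty of, the actual argument is in the error-balancing step. The small parameter $\eta$ in the paper is not a mollification scale applied to $\bF_N$ or $f$; it is a distance-from-the-boundary cutoff defining the shrinking domain $D_t = \S^K_{+,R(T-t)}\cap(\eta I + \S^K_+)$. This cutoff is forced by the structure of Proposition~\ref{Prop:approx_HJ}: the approximate-equation residual is bounded by terms involving $|h^{-1}|$ and $\kappa(h)$, which blow up at $\partial\S^K_+$, so the Gronwall estimate degrades like $\eta^{-3/4}$ as you approach the boundary. Mollifying $\bF_N$ in $h$ does not cure that singularity. The competing ``linear in the small parameter'' term then comes from neglecting the thin slab $\S^K_{+,R(T-t)}\setminus D_t$, whose measure is $O(T^{\gamma-1}\eta)$, not from a Lipschitz mollification error. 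Consequently, your schematic residual form $\bigl(\cK/M^\beta + N^{-c}\bigr)/\eps^r$ cannot produce the stated exponents; the paper's residual (after applying Cauchy--Schwarz to the $L^2$ bound of Proposition~\ref{Prop:approx_HJ} and the integration-by-parts estimate \eqref{eq:derivative_concentration}) carries a square root, i.e.\ $\eta^{-3/4}\bigl(N^{-1/8} + (\cK_{CM,N}/M^\beta)^{1/2}\bigr)$, and balancing this against $\eta$ is what yields $N^{-1/14}$ and $(\cK/M^\beta)^{2/7}$. Finally, the $\tfrac{\beta\vee 1}{2}$ contribution to $\alpha$ comes from the normalization $\cK_{CM,N}/M^\beta$ (the hypothesis controls growth of $\cK_{M,N}$ in $M$, not of $\|\bF_N\|_{\mathrm{Lip}}$, which by \eqref{eq:1st_der_bF} is bounded uniformly in $M$, $N$, $t$). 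These are not cosmetic differences: without the boundary cutoff and the explicit half-power from Cauchy--Schwarz the bookkeeping does not close.
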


\begin{Th}\label{thm:lower}
Let $p\in\N$. Suppose that there is $\psi:\S^K_+\to\R$ such that $\bF_N(0,\cdot)$ converges to $\psi$ pointwise, and that for each $M>0$ we have
\begin{align}\label{eq:cK_decay_beta}
    \lim_{N\rightarrow \infty}\cK_{M,N}=0.
\end{align}
Then, for any $\H$ of the form \eqref{eq:H}, there is a unique Lipschitz viscosity solution $f$ to \eqref{eq:HJ_eqn} with $f(0,\cdot)=\psi$, and 
\begin{align*}
    \limsup_{N\to\infty}\bF_N(t,h) \leq f(t,h),\quad \forall (t,h)\in \R_+\times \S^K_+.
\end{align*}
If, in addition, $\H$ is convex, then a corresponding lower bound holds and thus
\begin{align*}
    \lim_{N\to\infty}\bF_N(t,h) = f(t,h),\quad \forall (t,h)\in \R_+\times \S^K_+.
\end{align*}

\end{Th}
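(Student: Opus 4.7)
The plan is to combine the well-posedness of viscosity solutions for \eqref{eq:HJ_eqn} (Section~\ref{section:vis_sol}) with the approximate Hamilton--Jacobi equation satisfied by $\bF_N$ (Section~\ref{section:approx_HJ}). Existence and uniqueness of the Lipschitz viscosity solution $f$ with initial data $\psi$ is an immediate consequence of the general well-posedness theory; existence is furnished concretely by the Hopf formula, which provides a variational representation of $f$ via Fenchel conjugation on $\S^K_+$ (Appendix~\ref{section:biconjugation}).

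For the lower bound under the convexity hypothesis on $\H$, the plan is to show that $\bF_N$ is an approximate viscosity supersolution of \eqref{eq:HJ_eqn}. The approximate HJ equation of Section~\ref{section:approx_HJ} reads, schematically, $\partial_t F_N - \H(\nabla F_N) = o_N(1)$; taking expectations and applying Jensen's inequality to the convex $\H$ yields
\begin{align*}
\partial_t \bF_N(t,h) \,=\, \E\big[\H(\nabla F_N(t,h))\big] + o_N(1) \,\geq\, \H\big(\nabla \bF_N(t,h)\big) + o_N(1),
\end{align*}
where Nishimori's identity $\E\,\nabla F_N = \nabla \bF_N$ is used. The half-relaxed lower limit $f_*(t,h) := \liminf_{(s,k)\to(t,h),\,N\to\infty}\bF_N(s,k)$, which is well-defined using local Lipschitz bounds and the concentration $\cK_{M,N}\to 0$, then inherits the viscosity supersolution property, and the comparison principle together with $f_*(0,\cdot)\geq \psi$ gives $f_*\geq f$, hence $\liminf_N \bF_N(t,h)\geq f(t,h)$ pointwise.

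For the upper bound, which must hold for general (possibly non-convex) $\H$, Jensen's inequality is unavailable, so the argument proceeds through the Hopf variational representation. The Hopf formula exhibits $f$ as the envelope of affine classical solutions of \eqref{eq:HJ_eqn} parameterized by trial matrices $q \in \S^K_+$. For each trial $q$ and each $(t,h)$, one connects $\bF_N(t,h)$ to the decoupled free energy $\bF_N(0,\,h + tq)$ via a Guerra-type interpolation path; the approximate HJ equation controls the derivative along this path up to a vanishing error, once the overlap is replaced by $\nabla \bF_N$ using the concentration hypothesis. Passing to the extremum in $q$ via the Fenchel--Moreau biconjugation on $\S^K_+$ (Appendix~\ref{section:biconjugation}) and invoking $\bF_N(0,\cdot) \to \psi$ pointwise yields $\limsup_N \bF_N(t,h) \leq f(t,h)$ pointwise.

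The main obstacle is the upper bound without convexity of $\H$: one cannot propagate a subsolution property for $\bF_N$ via Jensen, so the argument must be routed through the variational Hopf representation. The Fenchel--Moreau biconjugation on $\S^K_+$ is the essential tool that makes this representation usable on the semi-definite cone, and the convexity and monotonicity of $\bF_N$ in $h$ (i.e.\ $\bF_N \in \cA$) are critical for controlling the Guerra interpolation remainders against the concentration estimate $\cK_{M,N}\to 0$.
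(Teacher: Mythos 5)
Your outline of the existence/uniqueness part and of the lower bound under convexity is consistent with the paper: uniqueness comes from the comparison principle (Proposition~\ref{prop:comp_principle}), existence from the Hopf formula (Proposition~\ref{Prop:hopf_viscosity}), and the lower bound follows by showing limits of $\bF_N$ are supersolutions, where convexity of $\H$ enters through Jensen. This much is sound.

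The upper bound, however, is where you diverge, and there is a genuine gap. You claim that without convexity ``Jensen's inequality is unavailable'' so one must abandon the viscosity-solution argument and route through a Guerra interpolation matched to the Hopf formula. This is a misdiagnosis. The paper proves the upper bound by showing that every subsequential locally uniform limit of $(\bF_N)$ is a viscosity \emph{subsolution}, and this argument never uses Jensen and never needs $\H$ convex. The mechanism is different: at a local maximum $(t_N,h_N)$ of $\bF_N-\tphi$ one has two-sided information, $\nabla(\bF_N-\tphi)=0$ and the space Hessian of $\bF_N-\tphi$ nonpositive. Combined with $\Delta\bF_N\geq 0$ from \eqref{eq:2nd_der_bF}, this controls $\Delta\bF_N$ from above locally, and after a local spatial average (the mollified $G_N$ of \eqref{eq:G_N}) it yields both the $L^2$ estimate $\fint_{h'_N+D_N}|\nabla\bF_N-\nabla G_N|^2\lesssim\delta_N^{\gamma+1}$ and the concentration estimate \eqref{eq:grad_F_N_concent}. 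The comparison $\big|\fint\H(\nabla\bF_N)-\H(\nabla G_N)\big|\lesssim\delta_N^{1/2}$ then follows from local Lipschitzness of $\H$ alone, not convexity. The asymmetry between the two directions is not Jensen versus no-Jensen; it is that at a local \emph{minimum} the Hessian sign is unfavorable, so the $L^2$ gradient control degenerates, and convexity plus a spatial Jensen is what rescues the one-sided inequality there.

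The Guerra interpolation you propose as a substitute does not obviously close. Two concrete obstacles. First, the error terms in Proposition~\ref{Prop:approx_HJ} involve $\kappa(h)N^{-1/4}(\Delta\bF_N+|h^{-1}|)^{1/4}$ and $\E|\nabla F_N-\nabla\bF_N|^2$, neither of which is pointwise small; they only become small after integrating against smooth test data over a spatial neighborhood (which is exactly what the $G_N$ mollification and the integration-by-parts step \eqref{eq:IBP_step} provide). Integrating along a single one-dimensional characteristic $s\mapsto(s,h+(t-s)q)$ provides no such averaging, so the error along the interpolation path is not controlled by the hypotheses you invoke. Second, even granting the path estimate, the resulting identity $\bF_N(t,h)=\bF_N(0,h+tq)-\int_0^t\big(\H(\nabla\bF_N)-q\cdot\nabla\bF_N\big)\,\d s+\text{err}$ contains the nonlinear combination $\H(\nabla\bF_N)-q\cdot\nabla\bF_N$; without convexity of $\H$ there is no supporting-hyperplane inequality to turn this into the Hopf value $q\cdot h-\psi^*(q)+t\H(q)$, so the ``passing to the extremum via Fenchel--Moreau biconjugation'' step does not follow. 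You would in effect need a one-sided comparison that convexity was supposed to supply, which contradicts your stated goal of avoiding it. The correct route is the paper's subsolution argument, which works for any $\H$ of the form \eqref{eq:H}.
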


The proofs of Theorem~\ref{thm:general_cvg_bF} and Theorem~\ref{thm:lower} are in Section~\ref{section:cvg_weak} and Section~\ref{section:vis_sol}, respectively.

\begin{Rem}[Conditions on $\cA_\H$]
When $\phi$ is smooth, we can compute that $\nabla \cdot \big(\cD \H(\nabla \phi)\big)= \cD^2\H(\nabla \phi)\cdot \nabla^2\phi$ where $\cD^2\H$ is the Hessian of $\H$. Lemma~\ref{lemma:H_monotone_special} will show that if $\H$ is convex, then the conditions on $\cA_\H$ in Theorem~\ref{thm:general_cvg_bF}, namely, the convexity of $\cA_\H$ and $\bF_N(t,\cdot)\in \cA_\H$, are satisfied. 

Note that when $p\leq 2$, $\cD^2\H$ is constant and in this case $\cA_\H$ is always convex. Hence, the only condition to check is that $\bF_N(t,\cdot)\in\cA_\H$. In Appendix~\ref{section:apdx_nonsym}, we demonstrate a special model of \eqref{eq:observ} with $p=2$ where this condition is satisfied but $\H$ is not convex. This model is equivalent to the nonsymmetric matrix inference problem considered in \cite{miolane2017fundamental,barbier2017layered,barbier2019adaptive,kadmon2018statistical,luneau2020high,chen2020hamilton}.

It seems that the conditions on $\cA_\H$ are not satisfied by the model \eqref{eq:special} for odd $p\geq 1$. The explicit expression of $\cD^2\H$ in this model is computed in \eqref{eq:Hessian_H_special}. We believe that this issue is closely related to a similar difficulty in the adaptive interpolation approach to the same model with odd $p$, which is discussed in \cite[Section 7]{luneau2019mutual}.
\end{Rem}

\begin{Rem}[Local uniform convergence]\label{Rem:infinity_norm}
The local $L^\infty_tL^1_x$ convergence in Theorem~\ref{thm:general_cvg_bF} can be upgraded to local $L^\infty_tL^\infty_x$. Let $\xi$ be a smooth function supported on $-\S^K_{+,1}$, and satisfy $0\leq \xi\leq 1$ and $\int \xi>0$. For $\eps\in(0,1)$, let $\xi_\eps(x) = \eps^{-K(K+1)/2}\xi(\eps^{-1}x)$. Then, for every Lipschitz $g:\S^K_+\to\R$, we have
\begin{align*}
    \|g\|_{L^\infty(\S^K_{+,M})}\leq \|g*\xi_\eps\|_{L^\infty(\S^K_{+,M})}+ \|g-g*\xi_\eps\|_{L^\infty(\S^K_{+,M})}\\
    \leq C\eps^{-K(K+1)/2}\|g\|_{L^1(\S^K_{+,M+1})}+C\eps\|g\|_{\mathrm{Lip}}.
\end{align*}
By \eqref{eq:1st_der_bF}, we know $\bF_N(t,\cdot)$ is Lipschitz uniformly in $N$ and $t$, and thus $f(t,\cdot)$ is also Lipschitz. Replace $g$ in the above by $\bF_N(t,\cdot)-f(t,\cdot)$, apply Theorem~\ref{thm:general_cvg_bF} and optimize the above display over $\eps$ to see convergence in local $L^\infty_tL^\infty_x$.
\end{Rem}

\begin{Rem}[Variational formulae]\label{Rem:var_formula}
Under the assumptions on $\psi$ in the two theorems, we can show that $\psi$ is Lipschitz, convex and nondecreasing in the sense that $\nabla\psi\in\S^K_+$. By the pointwise convergence $\bF_N(0,\cdot)\to\psi$ and \eqref{eq:1st_der_bF}, \eqref{eq:1st_der_bF_lower}, \eqref{eq:2nd_der_bF}, and the pointwise convergence $\bF_N(0,\cdot)\to\psi$, we can see that $\psi$ is Lipschitz in the two theorems above. Proposition~\ref{Prop:hopf_viscosity} will show that $f$ in Theorem~\ref{thm:lower} can be represented by the following variational formula
\begin{align}\label{eq:f_Hopf_0}
    f(t,x)=\sup_{z\in \S^K_+}\inf_{y\in \S^K_+}\big\{ z\cdot (x-y)+\psi(y)+t\H(z)\big\},\quad \forall (t,x)\in\R_+\times \S^K_+.
\end{align}

When $\H$ is convex, comparing Theorem~\ref{thm:general_cvg_bF} with Theorem~\ref{thm:lower} in view of Remark~\ref{Rem:infinity_norm}, we can see that the unique weak solution $f$ coincides with the viscosity solution pointwise, and thus also admits the representation \eqref{eq:f_Hopf_0}. For general $\H$, we believe weak solutions are still of the form \eqref{eq:f_Hopf_0}. The relatively difficult part is to verify that \eqref{eq:f_Hopf_0} satisfies \eqref{item:ws_2} of Definition~\ref{Def:weak_sol}. 

\end{Rem}

\begin{Rem}[Possibility for weaker assumptions on $\H$]
Let us point out key inequalities, where the assumptions on $\H$ are used. If these inequalities still hold in certain models, then our results should still be valid there.

The conditions on $\cA_\H$ in Theorem~\ref{thm:general_cvg_bF} are used to obtain the inequality \eqref{eq:div_b_eps_geq_0} in the proof of Lemma~\ref{lemma:weak_comp}, which is further used to prove the uniqueness of weak solutions (Proposition~\ref{Prop:uniq_HJ}), and the convergence to the unique weak solution (Proposition~\ref{prop:cvg_assume_existence} and Proposition~\ref{prop:existence}). In fact, uniqueness and convergence are still valid if the right-hand side of \eqref{eq:div_b_eps_geq_0} is replaced by a negative constant depending locally on the temporal and spacial variables. However, the convergence rate can be much worse (logarithmic in $N$), because the absolute value of this constant will appear in the exponential factor of Gronwall's lemma.

The convexity assumption in the second assertion of Theorem~\ref{thm:lower} is only used to apply Jensen's inequality to derive \eqref{eq:supsol_G_N_lower} in the proof of that the limit of $\bF_N$ is a viscosity supersolution. 
\end{Rem}

\subsection{Special case}\label{section:special}

We apply Theorem~\ref{thm:general_cvg_bF} and Theorem~\ref{thm:lower} to an i.i.d.\ case. Let $\cP$ be a probability distribution in $\R^K$ supported on $\{z\in\R^K:|z|\leq \sqrt{K}\}$. For each $N\in\N$, let the row vectors of $X$, namely $X_{1,\cdot}\ ,X_{2,\cdot}\ ,\dots,\ X_{N,\cdot}$, be i.i.d.\ with law $\cP$. Set $L=1$ and consider
$A \in \R^{K^p\times 1}$ given by
\begin{align*}
    A_\jj = 
    \begin{cases}
    1,&\quad \text{if $j_1=j_2=\dots = j_p$,}\\
    0, &\quad \text{otherwise.}
    \end{cases}
\end{align*}
Here, we used the multi-index notation
\begin{gather}\label{eq:multi-index}
    \jj = (j_1,j_2,\dots,j_p)  \in \{1,\cdots, K\}^p.
\end{gather}
Explicitly, \eqref{eq:observ} now becomes
\begin{align}\label{eq:special}
    Y_\ii = \sqrt{\frac{2t}{N^{p-1}}} \sum_{j=1}^K\prod_{n=1}^pX_{i_n, j}+W_\ii,\quad \ii \in \{1,\cdots, N\}^p,
\end{align}
and \eqref{eq:H} becomes
\begin{align}\label{eq:H_special}
    \H(q) = \sum_{j,j'=1}^K\big(q_{j,j'}\big)^p,\quad q \in \S^K_+.
\end{align}

Using \eqref{eq:def_F_N} and the fact that rows of $X$ are i.i.d., we can see $\bF_N(0,\cdot) = \bF_1(0,\cdot)$,
for all $N\in\N$. Setting $\psi = \bF_1(0,\cdot)$, we clearly have $\cL_{\psi,M,N}=0$ for all $M$ and $N$. Estimate on $\cK_{M,N}$ is given in Lemma~\ref{lemma:concentration}. When $p=1$ or $p$ is even, Lemma~\ref{lemma:H_monotone_special} shows that the assumptions on $\cA_\H$ in Theorem~\ref{thm:general_cvg_bF} are satisfied. Applying the main results, we have the following corollary.

\begin{Cor}
In the special case described above, let $f$ be given by \eqref{eq:f_Hopf_0} with $\psi = \bF_1(0,\cdot)$. Then for all $p\in\N$, we have
\begin{align*}
    \limsup_{N\to\infty}\bF_N(t,h)\leq f(t,h),\quad \forall(t,h)\in \R_+\times \S^K_+.
\end{align*}
If $p$ is even or $p=1$, then there is $C>0$ such that, for all $M\geq 1$ and $N\in\N$,
\begin{align*}
    \sup_{t\in[0,M]}\int_{\S^K_{+,M}}\big|\bF_N(t,h)-f(t,h)\big|\d h \leq C M^{\frac{K(K+1)+3}{2}}N^{-\frac{1}{14}}.
\end{align*}
\end{Cor}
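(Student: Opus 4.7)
The plan is to deduce both assertions as direct applications of Theorem~\ref{thm:lower} and Theorem~\ref{thm:general_cvg_bF}, once the hypotheses of those theorems have been verified in this specialized model. Throughout, set $\psi=\bF_1(0,\cdot)$. Because the rows of $X$ are i.i.d.\ under $P^X_N$, at $t=0$ the Hamiltonian \eqref{eq:H_N_enriched} decouples into a sum over rows, giving $\bF_N(0,\cdot)=\bF_1(0,\cdot)=\psi$ for every $N\in\N$. Hence $\cL_{\psi,M,N}=0$ for all $M,N$, and the pointwise convergence $\bF_N(0,\cdot)\to\psi$ required by Theorem~\ref{thm:lower} is immediate. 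The remaining hypothesis of Theorem~\ref{thm:lower}, namely $\lim_{N\to\infty}\cK_{M,N}=0$, is supplied by Lemma~\ref{lemma:concentration}. Theorem~\ref{thm:lower} then produces a unique Lipschitz viscosity solution $f$ of \eqref{eq:HJ_eqn} with initial condition $\psi$ and yields $\limsup_{N\to\infty}\bF_N(t,h)\leq f(t,h)$; Proposition~\ref{Prop:hopf_viscosity} (cf.\ Remark~\ref{Rem:var_formula}) identifies this $f$ with the Hopf formula \eqref{eq:f_Hopf_0}, proving the first assertion.

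For the second assertion, restricted to $p=1$ or $p$ even, the plan is to invoke Theorem~\ref{thm:general_cvg_bF}. The first bullet of that theorem follows from Lemma~\ref{lemma:concentration}, which supplies some $\beta\in(0,1]$ and a constant $C$ with $\cK_{M,N}\leq CM^{\beta}N^{-1/4}$; the second bullet holds with $\cL_{\psi,M,N}=0$ as noted above. The third bullet is where the parity restriction enters: inspecting \eqref{eq:H_special}, for $p=1$ the map $\H$ is linear and for even $p$ it is a sum of convex functions $q_{j,j'}\mapsto q_{j,j'}^p$; in both cases $\H$ is convex, and so the condition on $\cA_\H$ and the membership $\bF_N(t,\cdot)\in\cA_\H$ hold by Lemma~\ref{lemma:H_monotone_special}.

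It then only remains to insert the quantitative estimates into \eqref{eq:cvg_rate}. The exponent simplifies to
\begin{equation*}
\alpha=\frac{K(K+1)}{2}+\frac{\beta\vee 1}{2}+1=\frac{K(K+1)+3}{2},
\end{equation*}
since $\beta\leq 1$, and the bound $\cK_{M,N}\leq CM^\beta N^{-1/4}$ gives $(\cK_{CM,N}/M^\beta)^{2/7}\leq C' N^{-1/14}$. Combining these with $\cL_{\psi,CM,N}=0$ reduces the right-hand side of \eqref{eq:cvg_rate} to a constant multiple of $M^{(K(K+1)+3)/2}N^{-1/14}$, which is exactly the asserted rate.

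The proof is essentially a verification exercise: the only genuinely non-routine point is confirming that $\cA_\H$ is convex and contains every $\bF_N(t,\cdot)$, but this is already encapsulated in Lemma~\ref{lemma:H_monotone_special} once convexity of $\H$ is observed. All other quantities ($\cL_{\psi,M,N}$, the initial data, the concentration, the Hopf representation) are either trivial here or furnished directly by earlier results, so no additional technical obstacle should arise.
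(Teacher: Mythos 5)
Your proof is correct and takes essentially the same route as the paper: verify $\cL_{\psi,M,N}=0$ from the i.i.d.\ decoupling at $t=0$, supply the concentration bound via Lemma~\ref{lemma:concentration}, invoke Lemma~\ref{lemma:H_monotone_special} for the $\cA_\H$ hypotheses when $p=1$ or $p$ even, and then apply Theorems~\ref{thm:lower} and~\ref{thm:general_cvg_bF} together with Proposition~\ref{Prop:hopf_viscosity}. The only cosmetic point is that Lemma~\ref{lemma:concentration} actually gives $\beta=1$ (not merely some $\beta\in(0,1]$), but this does not affect the final exponent since $\beta\vee1=1$ either way.
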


This model \eqref{eq:special} has also been investigated in \cite{luneau2019mutual} and similar convergence results for even orders were established. Although the convergence for odd orders remains open, we are able to obtain an upper bound for the limit of free energy.

\section{Approximate Hamilton--Jacobi equations}\label{section:approx_HJ}

The goal of this section is to show that $\bF_N$ satisfies an approximate Hamilton--Jacobi equation, as summarized in Proposition~\ref{Prop:approx_HJ} below. There is a considerable overlap between results in this section and \cite[Section 3]{mourrat2019hamilton}, which follows the approach of \cite{barbier2019overlap}. To simplify our presentation, whenever similar arguments are available in \cite[Section 3]{mourrat2019hamilton}, we shall only demonstrate key steps and refer to \cite[Section 3]{mourrat2019hamilton} for more detailed computations.

\begin{Prop}[Approximate Hamilton--Jacobi equations]\label{Prop:approx_HJ}
There exists $C>0$ such that for every $N\geq 1$ and uniformly over $\R_+\times \S^K_+$,
\begin{equation*}
\big|\partial_t\bF_N-\H(\nabla\bF_N)\big|^2\leq C\kappa(h)N^{-\frac{1}{4}}\big(\Delta \bF_N+|h^{-1}|\big)^{\frac{1}{4}}+C\E\big|\nabla F_N-\nabla \bF_N\big|^2.
\end{equation*}
Here $\kappa$ is the condition number of $h\in \S^K_+$ given by
\begin{equation}\label{eq:def_kappa(h)}
\kappa(h):=
\left\{
\begin{array}{ll}
|h||h^{-1}|,    &\quad\text{if }h\in \S^K_{++},  \\
+\infty     &\quad\text{otherwise}.
\end{array}
\right.
\end{equation}
\end{Prop}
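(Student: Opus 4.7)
The strategy is the overlap-concentration approach of \cite[Section 3]{mourrat2019hamilton} (itself following \cite{barbier2019overlap}), adapted to the matrix-tensor setting of \eqref{eq:H_N_enriched}. I would organize the proof in three steps.

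\emph{Step 1 (Derivative identities).} Compute $\partial_t \bF_N$ and $\nabla \bF_N$ by Gaussian integration by parts in $W$ (resp.\ $Z$), combined with the Nishimori identity, which lets the true signal $X$ be treated as an additional replica from the Gibbs measure $\la \cdot \ra$ associated with $H_N$. The algebraic identity $(x^{\otimes p})^\intercal X^{\otimes p} = (x^\intercal X)^{\otimes p}$ together with the definition \eqref{eq:H} of $\H$ yields
\begin{align*}
\frac{1}{N^p}(x^{\otimes p}A)\cdot(X^{\otimes p}A) = (AA^\intercal)\cdot Q^{\otimes p} = \H(Q),
\end{align*}
where $Q := N^{-1} x^\intercal X \in \S^K$ is the matrix overlap. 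Hence, modulo correction terms of order $|h^{-1}|$ coming from differentiating the matrix square root inside $\bY = X\sqrt{2h}+Z$,
\begin{align*}
\partial_t \bF_N = \E\la \H(Q)\ra, \qquad \nabla \bF_N = \E\la Q\ra.
\end{align*}

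\emph{Step 2 (Reduction to variances of $Q$).} Since $\H$ is a homogeneous polynomial of degree $p$ with bounded Hessian on the compact set $\{|Q|\leq K\}$ forced by \eqref{eq:support}, a Taylor expansion of $\H(Q)-\H(\E\la Q\ra)$ gives
\begin{align*}
\big|\partial_t\bF_N - \H(\nabla\bF_N)\big| \leq C\,\E\la|Q-\la Q\ra|^2\ra + C\,\E|\la Q\ra - \E\la Q\ra|^2 + \text{corrections}.
\end{align*}
The quenched (disorder) variance in the second term matches $C\,\E|\nabla F_N-\nabla\bF_N|^2$ after identifying $\nabla F_N$ with $\la Q\ra$ up to the same $|h^{-1}|$ correction; this is the second summand in the desired bound.

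\emph{Step 3 (Thermal variance via $\Delta\bF_N$ and smoothing).} A further Gaussian integration by parts relates $\Delta\bF_N$ to $\E\la|Q|^2\ra-\E|\la Q\ra|^2$ plus boundary corrections of size $|h^{-1}|$, so $\Delta\bF_N + C|h^{-1}|$ controls the thermal variance in an averaged sense. A pointwise-in-$(t,h)$ bound is, however, not available from the Laplacian alone, so the standard remedy is to convolve $\bF_N$ in the $h$-variable with a Gaussian kernel at scale $\delta$, bound the smoothed functional by a Jensen-type inequality $\delta\cdot(\text{variance})\lesssim$ oscillation of $\nabla\bF_N$, and optimize in $\delta$. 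This smoothing trade-off produces both the exponent $1/4$ and the gain $N^{-1/4}$ that appear in the statement.

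The main obstacle is Step~3: implementing the smoothing argument on $\S^K_+$ while carefully tracking all the $\sqrt{h}$-factors that arise from $\bY = X\sqrt{2h}+Z$, and checking that they combine into the single prefactor $\kappa(h) = |h||h^{-1}|$ rather than a worse power of $|h^{-1}|$. The degeneracy as $h$ approaches $\bd \S^K_+$ is precisely the reason the estimate cannot be uniform on $\S^K_+$ and must carry the condition-number weight $\kappa(h)$.
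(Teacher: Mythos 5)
Your proposal follows essentially the same route as the paper: compute the first derivatives of $\bF_N$ via Gaussian integration by parts and the Nishimori identity, recognize that $\partial_t\bF_N - \H(\nabla\bF_N)$ is a difference between $\E\la\H(Q)\ra$ and $\H(\E\la Q\ra)$ with $Q=N^{-1}x^\intercal x'$, reduce this to overlap variances, and then invoke the Laplacian-plus-smoothing overlap-concentration machinery (the paper imports this last step directly as \cite[(3.18)]{mourrat2019hamilton} rather than reproving it). However, two points in your write-up do not match how the argument actually goes.

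First, in Step~1 you say the identities $\partial_t\bF_N = \E\la\H(Q)\ra$ and $\nabla\bF_N=\E\la Q\ra$ hold ``modulo correction terms of order $|h^{-1}|$ coming from differentiating the matrix square root.'' This is not so: those identities are exact. The $D_{\sqrt{h}}$-terms that appear when you differentiate $F_N$ in $h$ are eliminated by the Gaussian integration by parts in $Z$ (see \eqref{eq:nablaFN} leading to \eqref{eq:grad_bFN}); the $|h^{-1}|$-weighted quantities enter only later, through the bounds on $\nabla F_N$ (not $\nabla\bF_N$), on $D^2_{\sqrt{h}}$, and in the overlap-concentration inequality itself. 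Locating the $|h^{-1}|$ corrections at the identity level misplaces where the degeneracy near $\bd\,\S^K_+$ comes in.

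Second, your Step~2 bounds $|\partial_t\bF_N - \H(\nabla\bF_N)|$ (first power) by a variance via a second-order Taylor expansion of $\H$. The statement you need controls the \emph{square} of this quantity by a variance, and squaring your Taylor bound yields the wrong form (variance squared). The paper instead uses a first-order Lipschitz bound on $\H$ — exploiting that $|Q|\leq K$ so $|a^{\otimes p}-b^{\otimes p}|\leq C N^{p-1}|a-b|$ on the support — followed by Jensen's inequality to get $|\partial_t\bF_N - \H(\nabla\bF_N)|^2\leq C N^{-2}\,\E\la|x^\intercal x'-\E\la x^\intercal x'\ra|^2\ra$ directly in the right power. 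Your Taylor route can be salvaged only by then using boundedness of $Q$ to downgrade $\text{var}^2\lesssim \text{var}$, which is an extra step you would need to flag; the Lipschitz-then-Jensen route is cleaner and is what the paper does.
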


\subsection{Proof of Proposition~\ref{Prop:approx_HJ}}

We start by proving the following identity
\begin{align}\label{eq:identity_bF_N}
    \partial_t\bF_N - \H\big(\nabla \bF_N\big)=\frac{1}{N^p}\bigg(\E\big\la\H(x^\intercal x') \big\ra - \H\big(\E\la x^\intercal x' \ra\big)\bigg).
\end{align}
\begin{proof}[Proof of \eqref{eq:identity_bF_N}]
Let us first compute $\partial_t\bF_N$ and $\nabla\bFN$.
Indeed, from \eqref{eq:def_F_N}, we can compute
\begin{equation}\label{eq:dtFN}
    \partial_t F_N(t,h)=\frac{1}{N}\bigg\la \frac{2}{N^{p-1}}\tilde x\cdot \tilde X+\sqrt{\frac{1}{2N^{p-1}t}}\tilde x \cdot W-\frac{|\tilde x|^2}{N^{p-1}}\bigg\ra,
\end{equation}
and, for $a\in \S^K$,
\begin{equation}\label{eq:nablaFN}
    a\cdot \nabla F_N(t,h)=
    \frac{1}{N}\Big\la 2a\cdot (x^\intercal X)+\sqrt{2}D_{\sqrt{h}}(a)\cdot (x^\intercal Z)-a\cdot (x^\intercal x)\Big\ra.
\end{equation}
Here $D_{\sqrt{h}}$ is the differential of the square-root function at $h\in \S^K_{++}$. More precisely, for $h\in \S^K_{++}$ and $a\in \S_K$, we have
\begin{equation*}
    D_{\sqrt{h}}(a)=\lim_{\eps\rightarrow 0}\Big(\sqrt{h+\eps a}-\sqrt{h}\Big).
\end{equation*}

\smallskip

Using the Gaussian integration by parts (c.f.\ \cite[Lemma 3.3]{mourrat2019hamilton}) and the Nishimori identity (c.f.\ \cite[Section 3.1]{mourrat2019hamilton}), we can get from \eqref{eq:dtFN} that
\begin{align}\label{eq:dtbFN}
    \partial_t\bF_N = \frac{1}{N^p}\E \la \tx \cdot \tx'\ra .
\end{align}
Here $x'$ is an independent copy (or replica) of $x$ with respect to the Gibbs measure $\la \cdot \ra$.

\smallskip

To compute $\nabla\bF_N$, we refer to the derivation of \cite[(3.17)]{mourrat2019hamilton}. The object $\bar x$ therein is $X$ in our notation, and our $F_N(t,h)$ corresponds to $F_N(t,2h)$ there. Hence \cite[(3.17)]{mourrat2019hamilton} is equivalent to $\nabla \bF_N = \frac{1}{N}\E\la x^\intercal X\ra$. A further application of the Nishimori identity yields
\begin{align}\label{eq:grad_bFN}
    \nabla\bF_N = \frac{1}{N}\E\la x^\intercal x'\ra.
\end{align}
By \eqref{eq:tx} and \eqref{eq:H}, we have $\tx\cdot \tx' = \H(x^\intercal x')$. This along with \eqref{eq:dtbFN}, \eqref{eq:grad_bFN} and \eqref{eq:H} implies \eqref{eq:identity_bF_N}.
\end{proof}

\medskip

Now, to prove Proposition~\ref{Prop:approx_HJ}, we only need to estimate the right hand side of \eqref{eq:identity_bF_N}. Using \eqref{eq:H} and \eqref{eq:support}, we get
\begin{align*}
    \Big|\E\big\la\H(x^\intercal x')\big\ra-\H\big(\E\la x^\intercal x'\ra\big)\Big|&\leq C\E\Big\la\Big|\big(x^\intercal x'\big)^{\otimes p}-\big(\E\la x^\intercal x'\ra\big)^{\otimes p}\Big|\Big\ra\\
    &\leq CN^{p-1}\E\big\la\big|x^\intercal x'-\E\la x^\intercal x'\ra\big|\ra,
\end{align*}
Jensen's inequality gives
\begin{align*}
    \Big|\E\big\la\H(x^\intercal x')\big\ra-\H\big(\E\la x^\intercal x'\ra\big)\Big|^2&\leq
    CN^{2p-2}\E\Big\la \big|x^\intercal x'-\E\la x^\intercal x'\ra\big|^2\Big\ra.
\end{align*}
We need the following estimate
\begin{equation*}
    \frac{1}{N^2}\E\Big\la \big|x^\intercal x'-\E\la x^\intercal x'\ra\big|^2\Big\ra\leq C\kappa(h)N^{-\frac{1}{4}}\big(\Delta \bF_N+|h^{-1}|\big)^{\frac{1}{4}}+C\E\big|\nabla F_N-\nabla \bF_N\big|^2.
\end{equation*}
This is exactly \cite[(3.18)]{mourrat2019hamilton}, and we shall omit the derivation here. The above two displays and \eqref{eq:identity_bF_N} gives the desired result.

\subsection{Estimates of derivatives}
We finish this section by collecting useful results in Lemma~\ref{lemma:basic_est} and \eqref{lemma:psd}.
Recall $A\in\R^{K^p\times L}$ and $W\in \R^{N^p\times L}$. We define
\begin{align}\label{eq:||w||}
    \|WA^\intercal\|=\sup_{y_1,\, y_2,\, \dots,\, y_p\in \mathbb{S}^{NK-1}}\Big\{ (WA^\intercal )\cdot (y_1\otimes y_2\otimes \cdots \otimes y_p) \Big\}
\end{align}
where $\mathbb{S}^{NK-1}$ denotes the unit sphere in $\R^{NK}$.
\begin{Lemma}\label{lemma:basic_est}
There exists a constant $C>0$ such that the following estimates hold uniformly over $\R_+\times \S^K_+$ for every $N\in\N$:
\begin{gather}
    |\partial_t\bF_N|+|\nabla\bF_N|\leq C,\label{eq:1st_der_bF}\\
    |\partial_t F_N|\leq C\bigg(1+\frac{\|WA^\intercal\|}{\sqrt{Nt}}\bigg),\quad \text{and}\quad |\nabla F_N|\leq C\bigg(1+\frac{|Z||h^{-1}|^{\frac{1}{2}}}{\sqrt{N}}\bigg).\label{eq:1st_der_F}
\end{gather}
Everywhere in $\R_+\times \S_+^K$, we have
\begin{gather}
    \partial_t\bF_N\geq 0,\qquad \nabla \bF_N\in \S^K_+,\label{eq:1st_der_bF_lower}\\
    \partial_t^2\bF_N \geq 0. \label{eq:2nd_dt_bF}
\end{gather}
Moreover, for every $a\in \S^K$, we have
\begin{gather}
    a\cdot \nabla(a\cdot \nabla\bF_N)\geq 0,\label{eq:2nd_der_bF}\\ 
    \quad a\cdot \nabla(a\cdot \nabla F_N)\geq -\frac{C|a|^2|Z||h^{-1}|^{\frac{3}{2}}}{\sqrt{N}}.\label{eq:2nd_der_F}
\end{gather}
\end{Lemma}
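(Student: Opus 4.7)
The plan is to read every estimate off the raw formulas \eqref{eq:dtFN}--\eqref{eq:nablaFN} together with their Nishimori-simplified versions \eqref{eq:dtbFN}--\eqref{eq:grad_bFN} already derived while proving \eqref{eq:identity_bF_N}. The standing tools are the support bound \eqref{eq:support} (so $|\tx|\le CN^{p/2}$ and $|x^\intercal x|\le CN$ on the support of $P^X_N$), Gaussian integration by parts in $W$ and $Z$, the Nishimori identity, and the first- and second-order differential bounds $|D_{\sqrt{h}}(a)|\le C|a||h^{-1}|^{1/2}$ and $|D^{(2)}_{\sqrt{h}}(a,a)|\le C|a|^2|h^{-1}|^{3/2}$ for the matrix square root on $\S^K_{++}$.

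The first-order estimates are then immediate. Inserting \eqref{eq:support} into \eqref{eq:dtbFN}--\eqref{eq:grad_bFN} yields \eqref{eq:1st_der_bF}. The signs in \eqref{eq:1st_der_bF_lower} come from replica independence: $\langle\tx\cdot\tx'\rangle=|\langle\tx\rangle|^2\ge 0$ and $\langle x^\intercal x'\rangle=\langle x\rangle^\intercal\langle x\rangle\in\S^K_+$ (a squared norm and a Gram matrix, respectively). For \eqref{eq:1st_der_F} I would bound each term appearing in \eqref{eq:dtFN} and \eqref{eq:nablaFN}: the signal-signal and self-interaction terms are $O(1)$ by \eqref{eq:support}; the $W$-term in $\partial_t F_N$ is controlled by rewriting $\tx\cdot W=x^{\otimes p}\cdot(WA^\intercal)$ and invoking the definition \eqref{eq:||w||} with unit vectors proportional to the flattening of $x$, paying a factor $|x|\le\sqrt{NK}$ per input slot; and the $Z$-term in $\nabla F_N$ is controlled by $|D_{\sqrt{h}}(a)|\le C|a||h^{-1}|^{1/2}$ together with $|x^\intercal Z|\le\sqrt{NK}|Z|$.

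For the Hessians \eqref{eq:2nd_dt_bF} and \eqref{eq:2nd_der_bF}, I would start from the general log-partition identity
\[
\partial_i\partial_j F_N=\tfrac{1}{N}\big[\langle\partial_i\partial_j H_N\rangle+\mathrm{Cov}_{\mathrm{Gibbs}}(\partial_iH_N,\partial_jH_N)\big]
\]
and take the expectation. In the Bayes-optimal setting, a further Gaussian integration by parts in the relevant noise variable ($W$ for $t$, $Z$ for $h$) combined with the Nishimori identity forces the ``bare Hessian'' term $\E\langle\partial_i\partial_j H_N\rangle$ to cancel with selected pieces of the covariance, leaving a manifestly PSD expression for $\partial_i\partial_j\bF_N$: a squared posterior mean in the $t$ direction (giving $\partial_t^2\bF_N\ge 0$, which is equivalent to the classical monotonicity of MMSE in SNR) and an expected covariance matrix of the overlap $x^\intercal x'$ in the $h$ direction (giving \eqref{eq:2nd_der_bF} upon evaluation on $a$). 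The pointwise inequality \eqref{eq:2nd_der_F} requires no Nishimori: the Gibbs variance piece of $a\cdot\nabla(a\cdot\nabla F_N)$ is automatically non-negative, and since the linear-in-$h$ part of $H_N$ has vanishing second derivative, only the contribution $\langle D^{(2)}_{\sqrt{2h}}(a,a)\cdot(x^\intercal Z)\rangle$ from the $\sqrt{2h}\cdot(x^\intercal Z)$ term remains; the matrix square-root bound together with $|x^\intercal Z|\le\sqrt{NK}|Z|$ then yields \eqref{eq:2nd_der_F} after dividing by $N$.

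The main obstacle is the Nishimori book-keeping behind \eqref{eq:2nd_der_bF}: because $\bY=X\sqrt{2h}+Z$ itself depends on $h$, one must simultaneously track the direct $h$-dependence of $H_N$, the $h$-dependence entering through $\bY$, and verify that exactly the right cancellations occur to recast the full Hessian of $\bF_N$ as a PSD Gibbs covariance. Once this is done, every remaining inequality reduces to a direct bound along the lines sketched above.
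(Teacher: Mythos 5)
Your plan coincides with the paper's proof: first-order bounds and sign constraints are read off \eqref{eq:dtFN}--\eqref{eq:grad_bFN} using \eqref{eq:support} and replica independence; the second derivatives of $\bF_N$ reduce, after Gaussian integration by parts, Nishimori and replica symmetrization, to manifestly non-negative sum-of-squares expressions; and \eqref{eq:2nd_der_F} is obtained as a Gibbs variance plus the $D^2_{\sqrt{h}}$ correction controlled by $|D^2_{\sqrt{h}}(a,a)|\le C|a|^2|h^{-1}|^{3/2}$. The paper differentiates the already-Nishimorified formulas \eqref{eq:dtbFN}, \eqref{eq:grad_bFN} once more rather than starting from the raw log-partition Hessian identity, but this difference is cosmetic and the resulting computations agree.
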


\begin{proof}[Proof of \eqref{eq:1st_der_bF}]
It follows easily from \eqref{eq:support}, \eqref{eq:dtbFN} and \eqref{eq:grad_bFN}.
\end{proof}

\begin{proof}[Proof of \eqref{eq:1st_der_F}]
In view of \eqref{eq:||w||}, we have
\begin{align*}
    \Big|\big(x^{\otimes p}A\big)\cdot W \Big|=\Big|\big(WA^\intercal\big)\cdot \big(x^{\otimes p}\big)\Big|\leq \|WA^\intercal \||x|^p.
\end{align*}
In addition, it can be seen from \eqref{eq:tx} that $|\tx|\leq C|x|^p$.
Using these, (\ref{eq:dtFN}) and (\ref{eq:support}), we have
\begin{align*}
    \big|\partial_t F_N(t,h)\big|&\leq \bigg\la\frac{2}{N^p}|\tilde x||\tilde X| +\sqrt{\frac{1}{2tN^{p+1}}}\Big|\big(x^{\otimes p}A\big)\cdot W \Big|+\frac{1}{N^p} |\tilde x|^2\bigg\ra\\
    &\leq C+\frac{C\|WA^\intercal\|}{\sqrt{Nt}}+C = C\bigg(1+\frac{\|WA^\intercal\|}{\sqrt{Nt}}\bigg).
\end{align*}
For the second estimate in \eqref{eq:1st_der_F}, we need the following estimate
\begin{equation}\label{eq:Dh(a)}
    |D_{\sqrt{h}}(a)|\leq C|a||h^{-1}|^{\frac{1}{2}}.
\end{equation}
Its proof can be seen from the derivation of \cite[(3.7)]{mourrat2019hamilton}. Insert $a=\frac{\nabla F_N}{|\nabla F_N|}\in \S^K$ into \eqref{eq:nablaFN} and then use \eqref{eq:Dh(a)} to see
\begin{align*}
    |\nabla F_N|&\leq \bigg\la \frac{2}{N}\big|x^\intercal X\big| +\frac{C|h^{-1}|^{\frac{1}{2}}}{N}\big|x^\intercal Z\big|+\frac{1}{N} |x^\intercal x|\bigg\ra\leq C\bigg(1+\frac{|Z||h^{-1}|^{\frac{1}{2}}}{\sqrt{N}}\bigg).
\end{align*}
\end{proof}

\begin{proof}[Proof of \eqref{eq:2nd_dt_bF}]

Recall \eqref{eq:dtbFN}. Using \eqref{eq:H_N_enriched}, we differentiate $\partial_t\bF_N$ one more time in $t$ to see
\begin{align*}
    N^p\partial^2_t\bF_N=\E\bigg\la (\tx\cdot\tx')\bigg(\frac{2}{N^{p-1}}\big(\tx+\tx'-2\tx''\big)\cdot \tilde X-\frac{1}{N^{p-1}}\big(|\tx|^2+|\tx'|^2-2|\tx''|^2\big)\\+\frac{1}{\sqrt{2N^{p-1}t}}\big(\tx+\tx'-2\tx''\big)\cdot W\bigg) \bigg\ra.
\end{align*}
Using the symmetry between replicas, the Nishimori identity and the Gaussian integration by parts, we can compute
\begin{align*}
    N^{2p-1}\partial^2_t\bF_N&= 2\E\big\la (\tx\cdot \tx')\big(\tx\cdot\tx' -2\tx\cdot \tx''+ \tx''\cdot \tx'''\big) \big\ra\\
    &=2\E\sum_{\ii,\jj,k,l}\Big(\la \tx_{\ii,k}\tx_{\jj,l}\ra^2 - 2 \la \tx_{\ii,k}\tx_{\jj,l}\ra\la \tx_{\ii,k}\ra\la\tx_{\jj,l}\ra+\la \tx_{\ii,k}\ra^2\la\tx_{\jj,l}\ra^2\Big)\geq 0.
\end{align*}
This gives \eqref{eq:2nd_dt_bF}.
\end{proof}

\begin{proof}[Proof of \eqref{eq:1st_der_bF_lower}]
By the independence of the replica $x'$ from $x$, we can rewrite \eqref{eq:dtbFN} as $\partial_t\bF_N=\frac{1}{N^p}\E(\la \tx\ra \cdot \la\tx\ra)$ and rewrite \eqref{eq:grad_bFN} as $\nabla \bF_N = \frac{1}{N}\E\la x \ra^\intercal \la x \ra$.
Then, \eqref{eq:1st_der_bF_lower} clearly follows.
\end{proof}

\begin{proof}[Proof of \eqref{eq:2nd_der_bF}]
For $a \in \S^K$, we can compute
\begin{align*}
    Na\cdot \nabla(a\cdot \nabla\bF_N) = \E\big\la\big(a\cdot x^\intercal  x'\big)^2\big\ra-2\E\big\la\big(a\cdot x^\intercal  x'\big)\big(a\cdot x^\intercal x''\big)\big\ra+\E\big\la a\cdot x^\intercal x'\big\ra^2.
\end{align*}
The details of this computation can be seen from the derivation of  \cite[(3.27)]{mourrat2019hamilton}. 
Expand the right hand side of the above display to get
\begin{align*}
    \E\sum_{i,j,k,m,n,l}a_{ij}a_{mn}\Big\la x_{ki}x'_{kj}x_{lm}x'_{ln}-2x_{ki}x'_{kj}x_{lm}x''_{ln}+x_{ki}x'_{kj}x''_{lm}x'''_{ln}\Big\ra
\end{align*}
where $x'$, $x''$, $x'''$ are replicas of $x$ with respect to the measure $\la \cdot \ra$. Then, \eqref{eq:2nd_der_bF} follows if we can show the above is nonnegative. Use the independence and write $\hat x = x - \la x \ra$ to see that the above display is equal to
\begin{align*}
    &\E\sum_{i,j,k,m,n,l}a_{ij}a_{mn}\Big(\la x_{ki}x_{lm}\ra\la x_{kj}x_{ln}\ra-2\la x_{ki}x_{lm}\ra\la x_{kj}\ra\la x_{ln}\ra+\la x_{ki}\ra\la x_{lm}\ra\la x_{kj}\ra\la x_{ln}\ra\Big)\\
    =\ &\E\sum_{i,j,k,m,n,l}a_{ij}a_{mn}\Big(\la x_{ki}x_{lm}\ra\la \hat x_{kj}\hat x_{ln}\ra-\la\hat x_{ki}\hat x_{lm}\ra \la x_{kj}\ra\la x_{ln}\ra\Big).
\end{align*}
Notice that since $a\in \S^K$, we can replace $i$ and $m$ by $j$ and $n$, respectively, in the second term inside the last pair of parentheses. So the above becomes
\begin{align*}
     &\E\sum_{i,j,k,m,n,l}a_{ij}a_{mn}\Big(\la x_{ki}x_{lm}\ra\la \hat x_{kj}\hat x_{ln}\ra-\la\hat x_{ki}\hat x_{lm}\ra \la x_{ki}\ra\la x_{lm}\ra\Big)\\
    =\ &\E\sum_{i,j,k,m,n,l}a_{ij}a_{mn}\la\hat x_{ki}\hat x_{lm} \ra\la \hat x_{kj}\hat x_{ln}\ra
    =\E\sum_{i,j,k,m,n,l}a_{ij}a_{mn}\big\la (\hat x^\intercal \hat x')_{ij}(\hat x^\intercal \hat x')_{mn}\big\ra\\
    =\ &\E\la (a\cdot \hat x^\intercal \hat x')^2\ra\geq 0.
\end{align*}
\end{proof}

\begin{proof}[Proof of \eqref{eq:2nd_der_F}]
By \eqref{eq:nablaFN}, we can compute
\begin{align}
    a\cdot &\nabla(a\cdot \nabla F_N(t,h))\nonumber\\
    &=\frac{1}{N}\Big(\Big\la\big(H_N'(a,h,x)\big)^2\Big\ra-\big\la H_N'(a,h,x)\big\ra^2\Big)+\frac{1}{N}\Big\la \sqrt{2}D^2_{\sqrt{h}}(a,a)\cdot x^\intercal Z\Big\ra,\label{eq:aaFN}
\end{align}
where
\begin{equation*}
    H_N'(a,h,x)=\sqrt{2}D_{\sqrt{h}}(a)\cdot x^\intercal Z+2a\cdot x^\intercal X-a\cdot x^\intercal x,
\end{equation*}
and, for every $h\in \S^K_{++}$ and $a,b\in \S^K$, 
\begin{equation*}
    D^2_{\sqrt{h}}(a,b)=\lim_{\eps\rightarrow 0}\eps^{-1}\big(D_{\sqrt{h+\eps b}}(a)-D_{\sqrt{h}}(a)\big).
\end{equation*}
Recognizing a variance term in \eqref{eq:aaFN} and using \eqref{eq:support}, we have
\begin{align*}
    a\cdot \nabla(a\cdot \nabla F_N&(t,h))\geq -C\Big|D^2_{\sqrt{h}}(a,a)\Big|\frac{|Z|}{\sqrt{N}}.
\end{align*}
The display \cite[(3.38)]{mourrat2019hamilton} states 
\begin{equation*}
    \big|D^2_{\sqrt{h}}(a,a)\big|\leq C|a|^2|h^{-1}|^{\frac{3}{2}}.
\end{equation*}
Combining this with the previous display, we obtain \eqref{eq:2nd_der_F}. 
\end{proof}

Lastly, we state an elementary lemma characterizing $\S^K_+$.
\begin{Lemma}\label{lemma:psd}
Let $a\in\S^K$, Then, $a\in\S^K_+$ if and only if $a\cdot b \geq0 $ for every $b\in\S^K_+$.
\end{Lemma}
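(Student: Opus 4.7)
The plan is to verify the two directions separately, exploiting the identification of the entry-wise inner product on $\S^K$ with the trace pairing: for $a,b \in \S^K$ we have $a \cdot b = \sum_{i,j} a_{ij} b_{ij} = \tr(ab)$ since $b = b^\intercal$. This reformulation is what makes the self-duality of $\S^K_+$ transparent.

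For the forward direction, suppose $a \in \S^K_+$. Since $a$ is symmetric and positive semidefinite, the spectral theorem lets me write $a = \sum_{i=1}^K \lambda_i v_i v_i^\intercal$ with $\lambda_i \geq 0$ and $v_i \in \R^K$. For any $b \in \S^K_+$, I would then compute
\begin{align*}
a \cdot b = \tr(ab) = \sum_{i=1}^K \lambda_i \tr(v_i v_i^\intercal b) = \sum_{i=1}^K \lambda_i \, v_i^\intercal b \, v_i \geq 0,
\end{align*}
where the last inequality uses $\lambda_i \geq 0$ together with $v_i^\intercal b \, v_i \geq 0$, which follows from $b \in \S^K_+$.

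For the reverse direction, suppose $a \cdot b \geq 0$ for every $b \in \S^K_+$. I would test this against the rank-one PSD matrices: for any $v \in \R^K$, the matrix $b := v v^\intercal$ belongs to $\S^K_+$, so
\begin{align*}
0 \leq a \cdot (v v^\intercal) = \tr(a v v^\intercal) = v^\intercal a v.
\end{align*}
Since $v$ is arbitrary and $a \in \S^K$ by hypothesis, this is exactly the definition of $a \in \S^K_+$.

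There is no real obstacle here; the only thing to be careful about is the implicit identification between the entry-wise inner product and the trace pairing, which is used without comment elsewhere in the paper. Once that is in place, both directions reduce to a one-line computation, with the spectral decomposition doing the work for ``$\Rightarrow$'' and testing on rank-one matrices doing the work for ``$\Leftarrow$''.
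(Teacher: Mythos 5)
Your proof is correct, and both directions work, but you take slightly different elementary routes than the paper. For the forward direction the paper writes $a\cdot b=\tr(\sqrt{a}\sqrt{b}\sqrt{b}\sqrt{a})\geq 0$ (nonnegativity of the trace of the PSD matrix $(\sqrt{b}\sqrt{a})^\intercal(\sqrt{b}\sqrt{a})$), whereas you use the spectral decomposition $a=\sum_i\lambda_i v_iv_i^\intercal$ and reduce to $\sum_i \lambda_i v_i^\intercal b\,v_i\geq 0$; both are one-line computations. For the reverse direction the paper first diagonalizes $a$ in an orthonormal basis and then tests with PSD matrices to show the diagonal entries (eigenvalues) are nonnegative, whereas you test directly with the rank-one PSD matrices $b=vv^\intercal$ to get $v^\intercal a v\geq 0$ for all $v$. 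Your version of the reverse direction is arguably cleaner, as it avoids the change of basis and lands directly on the definition of positive semidefiniteness; the paper's forward-direction trick, on the other hand, avoids invoking the spectral theorem explicitly. Both sets of arguments are standard instances of the self-duality of the PSD cone.
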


\begin{proof}
If $a \in \S^K_+$, then for any $b \in\S^K_+$ we have $a\cdot b =\tr(\sqrt{a}\sqrt{b}\sqrt{b}\sqrt{a})\geq 0$. For the other direction, by choosing an orthonormal basis, we may assume $a$ is diagonal. Testing by $b\in\S^K_+$, we can show that all diagonal entries in $a$ are nonnegative and thus $a\in\S^K_+$.
\end{proof}

\section{Weak solutions of Hamilton--Jacobi equations}\label{section:weak_sol}
In this section, we study the Hamilton--Jacobi equation \eqref{eq:HJ_eqn} through the perspective of weak solutions. Precise definitions of weak solutions will be stated and uniqueness of solutions is given in Proposition~\ref{Prop:uniq_HJ}. 

\smallskip

We identify $\S^K$ isometrically with $\R^{K(K+1)/2}$ via the orthonormal basis $\{e^{ij}\}_{1\leq i\leq j\leq K}$ given by, for $m,n \in \{1,2,\dots, K\}$,
\begin{align}\label{eq:orthonormal_basis}
    (e^{ij})_{mn}= \bigg(\mathds{1}_{i=j}+\frac{\sqrt{2}}{2}\mathds{1}_{i\neq j}\bigg)\mathds{1}_{\{m,n\}=\{i,j\}}.
\end{align}
Here $\mathds{1}$ stands for the indicator function. Naturally, we endow $\S^K$ with the Lebesgue measure on $\R^{K(K+1)/2}$. 
Recall the definition of $\cA_\H$ in \eqref{eq:A_H}.

\begin{Def}\label{Def:weak_sol}
A function $f:\R_+\times \S^K_+\to \R$ is a weak solution to \eqref{eq:HJ_eqn} if 
\begin{enumerate}
    \item $f$ is Lipschitz and satisfies \eqref{eq:HJ_eqn} almost everywhere;
    \item \label{item:ws_2} $f(t,\cdot)\in \cA_\H$, for all $t\geq 0$.
\end{enumerate}
\end{Def}

\begin{Prop}[Uniqueness]\label{Prop:uniq_HJ}
Under the assumption that $\cA_\H$ is convex, there is at most one weak solution to \eqref{eq:HJ_eqn}.
\end{Prop}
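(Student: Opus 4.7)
The plan is to prove uniqueness by an $L^2$-type energy method, reducing to a comparison lemma (which the excerpt anticipates as Lemma~\ref{lemma:weak_comp}). Suppose $f_1$ and $f_2$ are two weak solutions of \eqref{eq:HJ_eqn} sharing the same initial data, and write $w = f_1 - f_2$. For $s \in [0,1]$ set $f_s = (1-s) f_2 + s f_1$. By Rademacher's theorem together with Definition~\ref{Def:weak_sol}(1), both $f_1$ and $f_2$ are differentiable almost everywhere and satisfy the equation there, so $w$ obeys the linear transport equation
\begin{equation*}
\partial_t w = \H(\nabla f_1) - \H(\nabla f_2) = b\cdot \nabla w,\qquad b(t,h) := \int_0^1 \cD\H\bigl(\nabla f_s(t,h)\bigr)\, ds,
\end{equation*}
almost everywhere in $\R_+\times \S^K_{++}$.

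The crucial step is to show that the effective drift $b$ has nonnegative divergence in the sense of distributions on $\S^K_{++}$. This is where the convexity of $\cA_\H$ enters: since $f_1(t,\cdot), f_2(t,\cdot) \in \cA_\H$ and $\cA_\H$ is convex, each interpolant $f_s(t,\cdot)$ also lies in $\cA_\H$, so by definition $\nabla \cdot \cD\H(\nabla f_s(t,\cdot)) \geq 0$ in $\cD'(\S^K_{++})$. Integrating in $s \in [0,1]$ yields $\nabla \cdot b(t,\cdot) \geq 0$ distributionally; this is the anticipated inequality \eqref{eq:div_b_eps_geq_0}.

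With $\nabla \cdot b \geq 0$ in hand, I would run the standard energy argument. Because $b$ is only a measurable vector field, first mollify it by a smooth kernel $\xi_\eps$ supported in $-\S^K_{+,1}$ as in Remark~\ref{Rem:infinity_norm}, producing a smooth field $b_\eps$ on a slightly interior region that preserves the divergence inequality and the uniform bound $|b| \leq C$ (finite since $\H$ is polynomial in $q$ with $\nabla f_i$ uniformly bounded). For a smooth cutoff $\chi$ compactly supported in $\S^K_{++}$, multiply the transport equation by $2w\chi$ and integrate by parts:
\begin{equation*}
\frac{d}{dt}\int \chi w^2 \,\d h = -\int \chi w^2\, (\nabla\cdot b_\eps) \,\d h + \int w^2\, b_\eps \cdot \nabla \chi \,\d h + o_\eps(1).
\end{equation*}
The first summand is nonpositive. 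Choosing a one-parameter family of cutoffs that grow with $t$ at rate exceeding $\|b\|_\infty$ (finite-speed propagation) kills the second boundary term in the limit. Sending $\eps \to 0$, then applying Gronwall with the initial condition $w(0,\cdot)\equiv 0$, forces $\int \chi w^2 \equiv 0$; exhausting $\S^K_{++}$ by such cutoffs gives $w \equiv 0$ on the interior, and Lipschitz continuity extends this to the boundary.

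The main obstacle is reconciling low regularity with the boundary of $\S^K_+$: the divergence inequality only holds in the interior, and $\cD\H(\nabla f_s)$ has no pointwise meaning without mollification. The Remark~\ref{Rem:infinity_norm} trick of using a mollifier supported in $-\S^K_{+,1}$ is tailored to shift evaluation points inward while respecting the cone structure, and the nondecreasing property of weak solutions (built into $\cA_\H$) ensures that this shift is compatible with the energy estimate. Once the mollified divergence inequality $\nabla\cdot b_\eps \geq 0$ is justified on a suitable interior region, the rest of the argument is the standard transport-equation uniqueness via Gronwall.
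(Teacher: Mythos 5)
Your strategy is essentially the paper's: both arguments write $\partial_t w = b\cdot\nabla w$ a.e. with the same drift $b = \int_0^1 \cD\H(s\nabla f_1 + (1-s)\nabla f_2)\,ds$, obtain $\nabla\cdot b\geq 0$ distributionally from convexity of $\cA_\H$ (since each interpolant $f_s(t,\cdot)$ lies in $\cA_\H$), mollify with a kernel supported in $-\S^K_{+,1}$ to get a pointwise inequality, and then run an energy/Gronwall argument on a domain that shrinks at the finite propagation speed $R = \|b\|_\infty$. Your $L^2$ weight $w^2$ is just the paper's $\phi(w)$ for a particular smooth $\phi\geq 0$, and using $w(0,\cdot)=0$ in place of a $\phi$ that vanishes on $[-M\delta,M\delta]$ is an equivalent closing move.

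The gap is in the treatment of the boundary term near $\partial \S^K_+$. Your cutoff $\chi$ is compactly supported in $\S^K_{++}$, so it has \emph{two} decaying regions: the far field (large $|h|$, handled by the shrinking-in-$t$ cone, as you note) and the region near the cone boundary $\partial\S^K_+$, which the finite-speed-of-propagation argument does \emph{not} control. In the integration-by-parts term $-\int w^2\,\nabla\chi\cdot b_\eps$, near $\partial\S^K_+$ the vector $\nabla\chi$ points into $\S^K_{++}$ (i.e., $\nabla\chi\in\S^K_+$ up to lower-order corrections), so the needed sign $\nabla\chi\cdot b_\eps\geq 0$ requires knowing $b_\eps\in\S^K_+$. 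This is precisely the paper's Lemma~\ref{lemma:DH_psd}, which shows $\cD\H(q)\in\S^K_+$ for all $q$ (a structural fact about $\H=(AA^\intercal)\cdot q^{\otimes p}$, not a consequence of weak solutions being nondecreasing). In the paper's proof this appears as the estimate $\mathbf{n}\cdot b_\eps\leq 0$ on $\partial D_t\setminus\Gamma_t$, where $-\mathbf{n}\in\S^K_+$. Your appeal to ``the nondecreasing property of weak solutions (built into $\cA_\H$)'' gives $\nabla f_s\in\S^K_+$, i.e., that $\cD\H$ is \emph{evaluated} at a point in $\S^K_+$, but not that the \emph{value} of $\cD\H$ lies in $\S^K_+$; that extra step is essential and should be stated and proved.
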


\subsection{Proof of Proposition~\ref{Prop:uniq_HJ}}
The idea of proof is classical and can be seen in \cite{douglis1965solutions,kruzhkov1966generalized,kruzhkov1967generalized}. See also \cite{benton_hamilton-jacobi_1977} and \cite[Section 3.3.3]{evans2010partial}. The following lemma will also be used later. Recall the definitions of $\S^K_{+,M}$ in \eqref{eq:def_SM}.

\begin{Lemma}\label{lemma:weak_comp}
Assume that $\cA_\H$ is convex. For $M>0, T\geq 1, \eta\in(0,1)$, define
\begin{align}
    D_t& = \S^K_{+,R(T-t)}\cap (\eta I+\S^K_{+}),\quad\forall t\in[0,T] \label{eq:D_t_uniqueness}
\end{align}
with $R = \sup\big\{|\cD\H(p)|:p\in\S^K_{+,M}\big\} $.
Let $\phi:\R\to\R_+$ be any smooth function.
Then, the following holds for all choices of $M, T, \eta, \phi$, and for
every pair $f,g\in \cA_\H$ satisfying $\|f\|_{\mathrm{Lip}}, \|g\|_{\mathrm{Lip}}\leq M$:
\begin{align*}
    \frac{\d}{\d t}J(t) \leq \int_{D_t} \Big(\phi'(f-g)|r|\Big)(t,h)\d h ,\quad\forall t\in [0,T]
\end{align*}
where 
\begin{gather*}
    J(t) = \int_{D_t}\phi(f-g)(t,h)\d h,\\
    r = \Big(\partial_t f - \H(\nabla f)\Big) - \Big(\partial_t g - \H(\nabla g)\Big).
\end{gather*}
\end{Lemma}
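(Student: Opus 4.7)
The plan is to carry out a classical $L^1$-contraction (Kruzhkov-type) argument. I would first linearize the nonlinearity by introducing the averaged drift
\[
\mathbf{b}(t,h):=\int_0^1 \cD\H\bigl(s\nabla f(t,h)+(1-s)\nabla g(t,h)\bigr)\,\d s,
\]
so that almost everywhere $\H(\nabla f)-\H(\nabla g)=\mathbf{b}\cdot\nabla(f-g)$, and hence, by the chain rule applied to $\partial_t(f-g)=r+\H(\nabla f)-\H(\nabla g)$,
\[
\partial_t\phi(f-g)=\phi'(f-g)\,r+\mathbf{b}\cdot\nabla\phi(f-g).
\]
Since $f,g\in\cA_\H\subseteq\cA$ are $M$-Lipschitz and nondecreasing, their gradients take values in $\S^K_{+,M}$ almost everywhere, so $|\mathbf{b}|\leq R$ by the definition of $R$. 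The convexity assumption on $\cA_\H$ implies $sf+(1-s)g\in\cA_\H$ for every $s\in[0,1]$, hence $\nabla\cdot \cD\H(s\nabla f+(1-s)\nabla g)\geq 0$ in the distributional sense on $\S^K_{++}$; averaging over $s$ yields the crucial bound $\nabla\cdot\mathbf{b}\geq 0$ that will be invoked as \eqref{eq:div_b_eps_geq_0}.

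Next I would differentiate $J(t)$ using Reynolds' transport theorem. Only the outer sphere $\{|h|=R(T-t)\}$ contributes a nonzero boundary-velocity term, with outward normal speed $-R$, yielding
\[
\frac{\d J}{\d t}=\int_{D_t}\partial_t\phi(f-g)\,\d h-R\int_{\partial_{\text{out}}D_t}\phi(f-g)\,\d S.
\]
Substituting the chain-rule identity and integrating the transport term $\int_{D_t}\mathbf{b}\cdot\nabla\phi(f-g)\,\d h$ by parts produces the volume contribution $-\int_{D_t}\phi(f-g)\,\nabla\cdot\mathbf{b}\,\d h$, which is nonpositive because $\phi\geq 0$ and $\nabla\cdot\mathbf{b}\geq 0$, together with boundary integrals. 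On the outer sphere, $\mathbf{b}\cdot n\leq|\mathbf{b}|\leq R$, so that boundary term is dominated by $R\int_{\partial_{\text{out}}D_t}\phi(f-g)\,\d S$ and cancels exactly against the Reynolds' contribution. On the inner boundary $(\eta I+\partial\S^K_+)\cap\S^K_{+,R(T-t)}$, the outward normals are of the form $-b/|b|$ with $b\in\S^K_+$ (by Lemma~\ref{lemma:psd}), and $\mathbf{b}\in\S^K_+$ inherited from $\cD\H$ evaluated on gradients of functions in $\cA_\H$, so $\mathbf{b}\cdot n\leq 0$ there. Bounding $\phi'(f-g)r\leq\phi'(f-g)|r|$ (after a sign analysis of $\phi'$) completes the argument.

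\textbf{Main obstacle.} The principal technical difficulty is regularity: because $f$ and $g$ are only Lipschitz, the drift $\mathbf{b}$ is merely $L^\infty$ and $\nabla\cdot\mathbf{b}\geq 0$ holds only distributionally, so neither the Reynolds identity nor the integration by parts can be applied literally. I would circumvent this by mollifying $f$ and $g$ with a smooth kernel at scale $\eps>0$, exploiting the $\eta I$-buffer to ensure the mollified gradients remain inside $\S^K_{+,M}$, running the computation above for the smoothed data, and then passing $\eps\downarrow 0$ while tracking the cancellation of boundary terms. The most delicate aspect of this limit is preserving the inward-pointing property of the smoothed drift on $\partial(\eta I+\S^K_+)$, where the distributional inequality $\nabla\cdot\mathbf{b}\geq 0$ and the geometry of the cone $\S^K_+$ must be reconciled.
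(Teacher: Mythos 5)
Your plan matches the paper's argument up to and including the sign analysis: the same averaged drift $\mathbf{b}$, the same chain-rule identity $\partial_t\phi(f-g)=\phi'(f-g)r+\mathbf{b}\cdot\nabla\phi(f-g)$, the same use of convexity of $\cA_\H$ to get $\nabla\cdot\mathbf{b}\geq 0$ distributionally, and the same three-term treatment of the boundary/volume contributions after integrating by parts. The divergence from the paper is in how you propose to regularize, and that is where there is a genuine gap.

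You suggest mollifying $f$ and $g$ and then running the argument for the smoothed data, so the relevant drift becomes
\[
\mathbf{b}_\eps=\int_0^1\cD\H\bigl(s\nabla f_\eps+(1-s)\nabla g_\eps\bigr)\,\d s,
\]
where $f_\eps=f*\xi_\eps$, $g_\eps=g*\xi_\eps$. The crucial pointwise inequality you need at the next step is $\nabla\cdot\mathbf{b}_\eps\geq 0$. But this does not follow from $\nabla\cdot\mathbf{b}\geq 0$, because $\cD\H$ is nonlinear in its argument and does not commute with convolution: in general $\cD\H(\nabla f_\eps)\neq\bigl(\cD\H(\nabla f)\bigr)*\xi_\eps$, so there is no reason for the mollified $f_\eps,g_\eps$ to remain in $\cA_\H$, nor for the sign of $\nabla\cdot\mathbf{b}_\eps$ to be controlled. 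A related symptom appears in the final inequality: the residual of the smoothed system is $r_\eps=\bigl(\partial_t f_\eps-\H(\nabla f_\eps)\bigr)-\bigl(\partial_t g_\eps-\H(\nabla g_\eps)\bigr)$, which again does not equal $r*\xi_\eps$ because $\H$ is nonlinear, so it is not clear how the claimed bound by $\int_{D_t}\phi'(f-g)|r|$ would be recovered in the limit $\eps\downarrow 0$. You also flag the inward-pointing property of the drift on $\partial(\eta I+\S^K_+)$ as the delicate point, but that part is in fact benign (it follows from Lemma~\ref{lemma:DH_psd}); the delicate point is the divergence inequality.

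The paper resolves this by mollifying the drift $b$ itself rather than $f$ and $g$: it sets $b_\eps=b*\xi_\eps$ and rewrites $\partial_t v=\nabla\cdot(vb_\eps)-v\,\nabla\cdot b_\eps+(b-b_\eps)\cdot\nabla v+\phi'(w)r$, keeping $v=\phi(f-g)$, $w=f-g$, and $r$ exactly as they are. Since convolution commutes with divergence, $\nabla\cdot b_\eps=(\nabla\cdot b)*\xi_\eps\geq 0$ pointwise by the distributional inequality, and $b_\eps\in\S^K_+$ pointwise by Lemma~\ref{lemma:DH_psd}. The only error term is $(b-b_\eps)\cdot\nabla v$, which vanishes as $\eps\to 0$ by $L^1_{\mathrm{loc}}$ convergence of $b_\eps\to b$ against the bounded $\nabla v$. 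To repair your argument, replace the mollification of $f,g$ with this linear mollification of $b$; the rest of your computation then goes through unchanged.
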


\medskip

\begin{proof}
Let us set $w = f- g$ and $v = \phi(w)$. We proceed in steps.

\smallskip

Step 1. We study the relations which $w$ and $v$ satisfy. Since $f$ and $g$ are weak solutions, we have
\begin{align*}
    \partial_t w & = \H(\nabla f)-\H(\nabla g) + r = b\cdot \nabla w +r
\end{align*}
where the function $b$ is given by
\begin{align*}
    b = \int_0^1 \cD\H\big(s \nabla f + (1-s)\nabla g\big)\d s.
\end{align*}
Here $\cD\H$ is the gradient of $\H$ while $\nabla$ is taking derivatives in the spacial variable $h$. Then, we also have
\begin{align}\label{eq:v_eqn}
    \partial_t v = b \cdot \nabla v + \phi'(w)r.
\end{align}

\smallskip

Step 2. We introduce a family of mollifiers. Let $\xi:\R^{K(K+1)/2}\to\R_+$ be smooth, be supported on $-\S^K_{+,1}$, and satisfy $\int \xi=1$. For $\eps\in(0,1)$, set
\begin{align*}
    \xi_\eps =  \eps^{-K(K+1)/2}\xi\Big(\frac{\cdot}{\eps}\Big).
\end{align*}
Define $b_\eps$ by the convolution
\begin{align*}
        b_\eps(t,h) = \big(b(t,\cdot)*\xi_\eps\big)(h)=\int b(t,h-h')\xi_\eps(h')\d h'.
\end{align*}
Recall the definition of $\cA_\H$ in \eqref{eq:A_H}. Since $\cA_\H$ is assumed to be convex and $f,g\in\cA_\H$ are weak solutions, by the definition of $b$, we must have $\nabla\cdot b\geq 0$ in the distribution sense. Then, it is easy to see that
\begin{align}\label{eq:div_b_eps_geq_0}
    \nabla \cdot b_\eps \geq 0
\end{align}
holds pointwise everywhere. We finish this step by proving
\begin{align}\label{eq:b_eps_psd}
    b_\eps \in \S^K_+.
\end{align}
This follows from the next lemma, which will also be used later.
\begin{Lemma}\label{lemma:DH_psd}
For $\H$ given in \eqref{eq:H}, its differential $\cD\H\in \S^K_+$ everywhere.
\end{Lemma}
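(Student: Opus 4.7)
The strategy is to invoke Lemma~\ref{lemma:psd}: since $\cD\H(q)\in\S^K$, it lies in $\S^K_+$ if and only if $b\cdot \cD\H(q)\geq 0$ for every $b\in\S^K_+$. I will verify this by computing the pairing directly from \eqref{eq:H} and showing that each term of the resulting sum is nonnegative.

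Expanding $\H(q)=(AA^\intercal)\cdot q^{\otimes p}$ entry-wise gives
\begin{equation*}
\H(q)=\sum_{\ii,\jj\in\{1,\dots,K\}^p}(AA^\intercal)_{\ii,\jj}\prod_{n=1}^p q_{i_n,j_n},
\end{equation*}
so the Leibniz rule yields, for any $b\in\S^K$,
\begin{equation*}
b\cdot\cD\H(q)=\frac{d}{ds}\bigg|_{s=0}\H(q+sb)=\sum_{m=1}^p (AA^\intercal)\cdot T_m,\qquad T_m:=q^{\otimes(m-1)}\otimes b\otimes q^{\otimes(p-m)},
\end{equation*}
where each $T_m$ is regarded as a $K^p\times K^p$ matrix indexed by $(\ii,\jj)$.

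The argument then rests on two standard facts from linear algebra. First, the Kronecker product of symmetric positive semi-definite matrices is itself symmetric and positive semi-definite, since the eigenvalues of a Kronecker product are products of eigenvalues of the factors. Applied to $q,b\in\S^K_+$, this shows that every $T_m$ is symmetric PSD. Second, $AA^\intercal$ is a Gram matrix, hence symmetric PSD as well. Consequently, each term $(AA^\intercal)\cdot T_m$ equals $\tr(AA^\intercal\, T_m)\geq 0$, because the entry-wise inner product of two symmetric matrices of the same size coincides with the trace of their product, and the product of two symmetric PSD matrices has nonnegative trace. Summing in $m$ gives $b\cdot \cD\H(q)\geq 0$, and Lemma~\ref{lemma:psd} concludes. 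The only mildly delicate point is bookkeeping with the multi-index notation \eqref{eq:multi-index} and confirming that the dot product in \eqref{eq:H} really does match $\tr(\cdot\,\cdot)$ once both factors are symmetric; there is no substantive obstacle beyond this.
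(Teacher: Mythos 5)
Your proof is correct and follows essentially the same route as the paper's: both reduce to Lemma~\ref{lemma:psd}, apply a Leibniz-type expansion of $b\cdot\cD\H(q)$, and conclude from positive semi-definiteness of $AA^\intercal$ and of the resulting rank-one-in-$b$ tensors. The paper packages the derivative as $p\,(AA^\intercal)\cdot\sym(a\otimes q^{\otimes p-1})$ and then verifies positive semi-definiteness of $a\otimes q^{\otimes p-1}$ by computing $u^\intercal(a\otimes q^{\otimes p-1})u=\tr(\sqrt a\,u\,q^{\otimes p-1}u^\intercal\sqrt a)\geq 0$ by hand; you instead keep the sum $\sum_m T_m$ unsymmetrized and invoke as black boxes the facts that a Kronecker product of PSD matrices is PSD and that $\tr(ST)\geq 0$ for $S,T\in\S^{K^p}_+$. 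Your version is marginally cleaner in that it sidesteps the (harmless) point of whether $\sym(a\otimes q^{\otimes p-1})$ inherits positive semi-definiteness; otherwise the two arguments differ only in how much linear algebra is unpacked.
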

\begin{proof}
For simplicity, we write $S= AA^\intercal \in \S^{K^p}_+$.
Let $a,q\in \S^K$, then we can compute that
\begin{align*}
    a\cdot \cD\H(q) = pS\cdot \sym(a\otimes q^{\otimes p-1}).
\end{align*}
Here $\sym$ denotes the symmetrization of tensors given by
\begin{align*}
    \sym\Big(b_{1}\otimes b_{2}\otimes \dots \otimes b_{p}\Big) =\frac{1}{p!}\sum_{\sigma} b_{\sigma(1)}\otimes b_{\sigma(2)}\otimes \dots \otimes b_{\sigma(p)},
\end{align*}
where the summation is taken over all permutations. Since $S\in \S^{K^p}_+$, to show $a\cdot \cD\H(q)\geq 0$ it suffices to show $a\otimes q^{\otimes p-1} \in \S^{K^p}_+$. We only need to check
\begin{align*}
    u^\intercal \big(a\otimes q^{\otimes p-1}\big)u \geq 0, \quad \forall u \in \R^{K^p}.
\end{align*}
Index $u \in \R^{K^p}$ as $(u_\ii)_\ii$ with $\ii $ in the form of \eqref{eq:multi-index}. 
Writing $\hat \ii = (i_2,i_3,\dots,i_p)$, let us compute
\begin{align*}
    u^\intercal \big(a\otimes q^{\otimes p-1}\big)u  = \sum_{\ii, \jj}u_\ii \big(a\otimes q^{\otimes p-1}\big)_{\ii,\jj}u_\jj = \sum_{\ii,\jj}u_{i_1,\hat \ii}a_{i_1,j_1}(q^{\otimes p-1})_{\hat\ii,\hat \jj}u_{j_1,\hat\jj}\\ = \tr\big(u^\intercal a u q^{\otimes p-1}\big)=\tr\big(\sqrt{a}uq^{\otimes p-1}u^\intercal\sqrt{a}\big)\geq 0.
\end{align*}
Here, we used the fact that $q^{\otimes p-1}$ is positive semi-definite, which can be proved by iterating the above arguments. Therefore, we can conclude that $a\cdot\cD\H\geq 0$ for every $a\in\S^K_+$, which by Lemma~\ref{lemma:psd} implies $\cD\H\in\S^K_+$.
\end{proof}

\bigskip

Step 3. We study $J(t)$ which can be written as $J(t) = \int_{D_t}v(t,\cdot)$. On $\R_+\times \S^K_{+}$, the equation \eqref{eq:v_eqn} can be expressed as
\begin{align}\label{eq:partial_t_v}
    \partial_t v = \div( vb_\eps) - v\nabla\cdot b_\eps +(b-b_\eps)\cdot \nabla v+\phi'(w)r.
\end{align}
In addition to $D_t$, we set
\begin{align*}
    \Gamma_t &= \partial D_t \cap \{|x|=R(T-t)\}.
\end{align*}
Using \eqref{eq:partial_t_v} and integration by parts, we can compute 
\begin{align}
    \frac{\d}{\d t}J(t)& = \int_{D_t}\partial_t v  - R \int_{\Gamma_t} v\nonumber\\
    & = \int_{\Gamma_t}(\mathbf{n}\cdot b_\eps - R)v+ \int_{\partial D_t \setminus \Gamma_t} (\mathbf{n}\cdot b_\eps)v + \int_{D_t} v(-\nabla \cdot b_\eps) +\int_{D_t}(b-b_\eps)\cdot \nabla v\label{eq:middle_line}\\
    &\qquad\qquad\qquad+\int_{D_t}\phi'(w)r,\nonumber
\end{align}
where $\mathbf{n}$ stands for the outer normal vector, and the integrations are only carried out in the spacial variable. We treat the integrals in \eqref{eq:middle_line} individually. By the definitions of $b_\eps$ and $\xi_\eps$, we can see $|b_\eps|\leq R$. Hence, the first integral is nonpositive. Due to \eqref{eq:b_eps_psd} and the fact that $-\mathbf{n}\in\S^K_+$ on $\partial D_t \setminus \Gamma_t$, the second integral is also nonpositive. In view of \eqref{eq:div_b_eps_geq_0}, the third integral is again nonpositive, while the last one is $o_\eps(1)$. Therefore, taking $\eps\to 0$, we conclude that $\frac{\d }{\d t}J(t)\leq \int_{D_t}\phi'(w)|v|$ as desired.

\end{proof}

\begin{proof}[Proof of Proposition~\ref{Prop:uniq_HJ}]

Let $f$ and $g$ be two weak solutions to \eqref{eq:HJ_eqn} with $f(0,\cdot)=g(0,\cdot)$.
Let $M = \|f\|_{\mathrm{Lip}}\vee \|g\|_{\mathrm{Lip}}$.
For each $\delta>0$, we have $\|f(\delta, \cdot)-g(\delta, \cdot)\|_\infty \leq M\delta$. Let $\phi:\R\to\R_+$ be a smooth function and satisfy 
\begin{align*}
    \begin{cases}
    \phi(z) = 0,&\quad \text{if }|z|\leq M\delta,\\
    \phi(z)>0, &\quad \text{otherwise}.
    \end{cases}
\end{align*}
Applying Lemma~\ref{lemma:weak_comp} to $f,g,M,\phi$ described above, and any choice of $T,\eta$, we have $J(t)\leq J(\delta)$ for $t\in[\delta,T]$. But our choice of $\phi$ implies that
\begin{align*}
    J(\delta) = \int_{D_\delta}\phi(f-g)(\delta,h)\d h = 0.
\end{align*}
Since $J(t)$ is nonnegative, we must have $J(t)=0$ for all $t\in[\delta, T]$. This together with the definition of $\phi$ guarantees that
\begin{align*}
    |f(t,h)-g(t,h)|\leq M\delta, \quad \forall h\in D_t,\ \forall t\in[\delta, T].
\end{align*}
Recall the definition of $D_t$ in \eqref{eq:D_t_uniqueness} which depends on $T$ and $\eta$.
Sending $\delta\to0$, $\eta\to0$ and $T\to \infty$, we conclude that $f=g$.

\end{proof}

\subsection{Assumptions on \texorpdfstring{$\cA_\H$}{AH}}

 Lastly, we show that assumptions on $\cA_\H$ in Theorem~\ref{thm:general_cvg_bF} are satisfied when $\H$ is convex and in the special case considered in Section~\ref{section:special} for $p=1$ or $p$ even.

\begin{Lemma}\label{lemma:H_monotone_special}
If $\H$ is convex, then $\cA_\H$ is convex and contains $\bF_N(t,\cdot)$ for all $t$ and $N$. In the special case where $\H$ is given in \eqref{eq:H_special} and $p=1$ or $p$ is even, we have that $\H$ is convex.
\end{Lemma}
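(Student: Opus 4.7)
The plan is to handle the two assertions separately, treating the second (convexity of $\H$ from \eqref{eq:H_special} when $p=1$ or $p$ is even) first because it is almost immediate. For this, I would simply note that $\H(q)=\sum_{j,j'} q_{j,j'}^p$ is a sum of compositions of the linear functionals $q\mapsto q_{j,j'}$ with $x\mapsto x^p$, and the latter is convex on $\R$ exactly for $p=1$ (it is linear) or for $p$ even. Since sums and compositions with linear maps preserve convexity, the claim follows. The same argument transparently breaks for odd $p\geq 3$, where $x^p$ is not convex on $\R$, consistent with the cautionary remark preceding the lemma.

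For the main claim, my strategy is to prove the stronger statement $\cA_\H=\cA$ whenever $\H$ is convex. This instantly yields convexity of $\cA_\H$, since $\cA$ itself is trivially a convex set (sums and convex combinations preserve nondecreasingness, Lipschitz continuity, and convexity), and it reduces $\bF_N(t,\cdot)\in\cA_\H$ to $\bF_N(t,\cdot)\in\cA$, which is essentially supplied by Lemma~\ref{lemma:basic_est}: the bound \eqref{eq:1st_der_bF} gives the Lipschitz property, \eqref{eq:1st_der_bF_lower} combined with Lemma~\ref{lemma:psd} gives the nondecreasing property, and \eqref{eq:2nd_der_bF} gives convexity in $h$.

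The core step is therefore $\cA\subseteq\cA_\H$. For $\phi\in\cA$ that is smooth on $\S^K_{++}$, the pointwise identity already noted in the excerpt reads
\begin{align*}
    \nabla\cdot\bigl(\cD\H(\nabla\phi)\bigr)=\cD^2\H(\nabla\phi)\cdot\nabla^2\phi.
\end{align*}
Convexity of $\H$ makes $\cD^2\H(q)$ a positive semi-definite bilinear form on $\S^K\cong\R^{K(K+1)/2}$, while convexity of $\phi$ makes $\nabla^2\phi$ positive semi-definite in the same sense. The Frobenius inner product of two PSD symmetric matrices is nonnegative (this is precisely the $\tr(\sqrt{A}B\sqrt{A})\geq 0$ argument used in the proof of Lemma~\ref{lemma:psd}), so the right-hand side is $\geq 0$ pointwise and $\phi\in\cA_\H$. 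For $\bF_N(t,\cdot)$ itself, differentiating under the integral in \eqref{eq:def_F_N} gives smoothness in $h$ on $\S^K_{++}$ (where the matrix square-root is smooth), so this smooth-case reasoning applies directly and completes the second half of the lemma. For a general (only Lipschitz) $\phi\in\cA$, I would mollify: convolve with a smooth compactly supported kernel to obtain $\phi_\eps\in\cA$ smooth, apply the smooth case, and pass to the limit in the distributional inequality against nonnegative test functions compactly supported in $\S^K_{++}$.

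The main obstacle I anticipate is this last mollification step. One needs $\nabla\phi_\eps\to\nabla\phi$ strongly enough that $\cD\H(\nabla\phi_\eps)\to\cD\H(\nabla\phi)$ in $L^1_{\mathrm{loc}}$, so that the distributional integration by parts passes to the limit. Since $\cD\H$ is polynomial (hence locally Lipschitz) and test-function supports are already required to lie at positive distance from $\partial\S^K_+$, standard Friedrichs-type arguments handle this. I therefore view the mollification as a matter of careful bookkeeping rather than a substantive difficulty, and expect the whole lemma to follow from the pieces above together with the derivative estimates of Lemma~\ref{lemma:basic_est}.
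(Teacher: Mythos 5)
Your proof is correct and, for the main claim, follows essentially the same route as the paper: both reduce to $\cA_\H=\cA$ when $\H$ is convex via the pointwise identity $\nabla\cdot(\cD\H(\nabla\phi))=\cD^2\H(\nabla\phi)\cdot\nabla^2\phi$ and the nonnegativity of the inner product of two positive semi-definite forms, then invoke \eqref{eq:1st_der_bF}, \eqref{eq:1st_der_bF_lower}, and \eqref{eq:2nd_der_bF} to place $\bF_N(t,\cdot)$ in $\cA$, with a mollification step for nonsmooth $\phi$ that the paper only sketches and you spell out a little more. The one genuine difference is in the verification that $\H$ from \eqref{eq:H_special} is convex for $p=1$ or $p$ even: the paper computes the Hessian explicitly via the Hadamard calculus, obtaining $a\cdot\cD(a\cdot\cD\H)(q)=p(p-1)(a^{\circ 2})\cdot(q^{\circ p-2})$ and checking sign, whereas you simply observe that $q\mapsto q_{j,j'}^p$ is the composition of a linear functional with the scalar map $x\mapsto x^p$, which is convex on all of $\R$ precisely when $p=1$ or $p$ is even, and that sums of such compositions remain convex. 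Your version is more elementary and avoids the Hadamard-product bookkeeping, at the cost of not producing the explicit Hessian formula \eqref{eq:Hessian_H_special} that the paper reuses in a remark to discuss the failure for odd $p\geq 3$; both arguments are sound.
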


\begin{proof}
Note that, if $\phi:\S^K_+\to\R$ is smooth, then we have
\begin{align*}
    \nabla\cdot \big(\cD\H(\nabla\phi)\big) = \cD^2\H(\nabla\phi)\cdot \nabla^2\phi.
\end{align*}
If $\H$ is convex, a sufficient condition for the above to be nonnegative is the convexity of $\phi$. Recall that convexity is required in the definition of $\cA$ given above \eqref{eq:A_H}. Hence, by regularizing functions in $\cA$, we can see  $\cA_\H=\cA$ when $\H$ is convex. It is also clear that $\cA$ is convex. Due to \eqref{eq:1st_der_bF}, \eqref{eq:1st_der_bF_lower}, and \eqref{eq:2nd_der_bF}, we have $\bF_N(t,\cdot)\in\cA$ for all $t$ and $N$. This completes the proof of the first part of the lemma.

\smallskip

Now, let $\H$ be given in \eqref{eq:H_special}. By computing the limit of $\eps^{-1}(\H(q+\eps a)-\H(q))$, we can see $a\cdot \cD\H(q) = pa\cdot q^{\circ p-1} $ where $\circ$ denotes the Hadamard product. Differentiate one more time to get 
\begin{align}\label{eq:Hessian_H_special}
    a\cdot \cD(a\cdot \cD\H)(q) = p(p-1)(a^{\circ 2})\cdot (q^{\circ p-2})
\end{align}
for all $a\in\S^K$ and $q\in\S^K_+$. If $p=1$ or $p$ is even, this quantity is nonnegative. Hence the convexity of $\H$ follows.
\end{proof}

\section{Convergence to the weak solution}\label{section:cvg_weak}

The goal of this section is to prove Theorem~\ref{thm:general_cvg_bF}. The plan is to first prove the convergence of $\bF_N$ assuming the existence of a weak solution $f$ to \eqref{eq:HJ_eqn} with $f(0,\cdot)=\psi$. Next, we prove the existence of solutions by using a similar argument. We adopt this plan because notation is much simpler in the first part, and the two parts are independent. Theorem~\ref{thm:general_cvg_bF} follows from Proposition~\ref{prop:cvg_assume_existence} and Proposition~\ref{prop:existence} proved in Section~\ref{section:cvg_assume_existence} and Section~\ref{section:existence}, respectively.

\subsection{Convergence when assuming existence of solutions}\label{section:cvg_assume_existence}
Let us assume $f$ is a weak solution to \eqref{eq:HJ_eqn} satisfying $f(0,\cdot)=\psi$. We want to show that $\bF_N$ converges to $f$ as $N\to\infty$. The goal can be summarized as follows.

\begin{Prop}\label{prop:cvg_assume_existence}
In addition to the assumptions in Theorem~\ref{thm:general_cvg_bF}, we assume that there is a unique weak solution $f$ to \eqref{eq:HJ_eqn} with $f(0,\cdot)=\psi$. Then, there is $C>0$ such that \eqref{eq:cvg_rate} holds for all $M\geq 1$ and all $N\in\N$.
\end{Prop}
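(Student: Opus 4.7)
The plan is to apply Lemma~\ref{lemma:weak_comp} with the pair $(f,\bF_N)$, both of which lie in $\cA_\H$ (for $f$ by the definition of weak solution, for $\bF_N$ by the third hypothesis of Theorem~\ref{thm:general_cvg_bF}). Since $f$ satisfies \eqref{eq:HJ_eqn} almost everywhere, the residual in the lemma reduces to $r = \partial_t\bF_N - \H(\nabla\bF_N)$, which is controlled pointwise by Proposition~\ref{Prop:approx_HJ}. Fix $t^*\in[0,M]$, set $T=(1+R^{-1})M$ and pick any $\eta\in(0,1)$, so that $D_{t^*}\supset\S^K_{+,M}\cap(\eta I+\S^K_+)$. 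Taking $\phi=\phi_\delta$ a smooth approximation of $|\cdot|$ with $\|\phi_\delta'\|_\infty\leq 1$, integrating the differential inequality of Lemma~\ref{lemma:weak_comp} on $[0,t^*]$, and sending $\delta\to 0$, produces
\begin{equation*}
\int_{D_{t^*}}|\bF_N-f|(t^*,h)\,\d h \;\leq\; \int_{D_0}|\bF_N(0,h)-\psi(h)|\,\d h \;+\; \int_0^{t^*}\!\int_{D_t}|r(t,h)|\,\d h\,\d t.
\end{equation*}
Since $D_0\subset\S^K_{+,CM}$, the first term is bounded by $CM^{K(K+1)/2}\cL_{\psi,CM,N}$, and replacing $\int_{D_{t^*}}$ by $\int_{\S^K_{+,M}}$ costs $O(M^{K(K+1)/2}\eta)$ because the omitted boundary strip has measure $O(M^{K(K+1)/2-1}\eta)$ and $|\bF_N-f|\leq CM$ there by \eqref{eq:1st_der_bF}.

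The residual integral is bounded by taking square roots in Proposition~\ref{Prop:approx_HJ},
\begin{equation*}
|r|\leq C\kappa(h)^{1/2}N^{-1/8}\bigl(\Delta\bF_N+|h^{-1}|\bigr)^{1/8}+C\bigl(\E|\nabla F_N-\nabla\bF_N|^2\bigr)^{1/2}.
\end{equation*}
On $D_t$ one has $\kappa(h)\leq M\eta^{-1}$ and $|h^{-1}|\leq \eta^{-1}$, and $\int_{D_t}\Delta\bF_N$ is controlled by the divergence theorem together with the Lipschitz bound \eqref{eq:1st_der_bF}. Pulling the power $1/8$ inside by Jensen's inequality and integrating in $t$ then yields a deterministic contribution of order $M^{c_1}\eta^{-5/8}N^{-1/8}$. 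For the concentration piece, the Taylor identity
\begin{equation*}
\nabla u(h)\cdot a=\tfrac{1}{\eps}\bigl(u(h+\eps a)-u(h)\bigr)-\eps\int_0^1(1-s)\,a\cdot\nabla(a\cdot\nabla u)(h+s\eps a)\,\d s,
\end{equation*}
applied to $u=F_N-\bF_N$ (summed over a basis of $\S^K$) together with \eqref{eq:2nd_der_bF}--\eqref{eq:2nd_der_F}, which give that $\bF_N$ is convex and $F_N$ is convex up to a random error of size $C|Z|\eta^{-3/2}/\sqrt{N}$, converts function-concentration $\cK_{CM,N}$ into gradient-concentration. Optimizing the auxiliary scale $\eps$, then passing from the resulting $L^2_h$ bound to an $L^1_h$ bound via H\"older with the uniform growth $\cK_{CM,N}\leq CM^\beta$, produces a contribution of the form $M^{c_2}(\cK_{CM,N}/M^\beta)^{2/7}$.

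Combining the estimates, one arrives at a bound of the shape
\begin{equation*}
CM^{\alpha}\Bigl(\cL_{\psi,CM,N}+\eta+\eta^{-5/8}N^{-1/8}+(\cK_{CM,N}/M^\beta)^{2/7}\Bigr),
\end{equation*}
together with additional $\eta$-dependent corrections from the gradient-concentration step; selecting $\eta$ as a suitable negative power of $N$ so as to balance all $\eta$-weights jointly produces \eqref{eq:cvg_rate}. The main obstacle is the derivative-concentration step: one has to convert an $L^2_\omega$ bound on $F_N-\bF_N$ into an $L^2_\omega$ bound on $\nabla F_N-\nabla\bF_N$, which unavoidably loses powers both because the convexity of $F_N$ in $h$ holds only up to the Gaussian correction of \eqref{eq:2nd_der_F} and because the mollification scale $\eps$ must simultaneously be balanced against the boundary-separation parameter $\eta$. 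Keeping track of these trade-offs carefully, while ensuring $\eta$ remains small enough to match the $N^{-1/8}$ coming from the approximate Hamilton--Jacobi inequality, is what fixes the precise exponents $1/14$ and $2/7$ in \eqref{eq:cvg_rate}.
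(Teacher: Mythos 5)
Your high-level plan is the paper's: apply Lemma~\ref{lemma:weak_comp} to the pair $(\bF_N,f)$ with the smooth approximation $\phi_\delta$ of $|\cdot|$, bound the residual $r_N=\partial_t\bF_N-\H(\nabla\bF_N)$ via Proposition~\ref{Prop:approx_HJ}, handle $J_\delta(0)$ with $\cL_{\psi,CM,N}$, pay an $O(\eta)$ price to pass from $D_t$ to $\S^K_{+,M}$, and optimize over $\eta$. Those parts are correct and match the paper.

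The genuine deviation is in how you control the gradient-concentration term $\E\int_{D_t}|\nabla(F_N-\bF_N)|^2$, and there your argument has a gap that breaks the exponents. The paper (Step~4) writes $\int_{D_t}|\nabla u|^2=\int_{\partial D_t}u\,\nabla u\cdot\nn-\int_{D_t}u\,\Delta u$ and factors out $\|u\|_{L^\infty}$, which gives a bound that is \emph{linear} in $\|F_N-\bF_N\|_{L^\infty}$, hence linear in $\cK_{CM,N}$ after Cauchy--Schwarz; this is exactly what produces $\cK^{1/2}$ inside the square root in \eqref{eq:int_r_N_bound} and, after balancing against $\eta$, the stated $(\cK_{CM,N}/M^\beta)^{2/7}$. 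Your Taylor identity is a pointwise relation whose remainder $\eps\int_0^1(1-s)\,a\cdot\nabla(a\cdot\nabla u)(h+s\eps a)\,\d s$ is not pointwise bounded: the one-sided curvature facts \eqref{eq:2nd_der_bF}--\eqref{eq:2nd_der_F} only control $\int_{D_t}|\Delta u|$ (after another integration by parts), not $|\cD^2_a u|$ itself. So you cannot square the identity and integrate. The natural fix is to use the identity to bound $\int_{D_t}|\nabla u|$ (optimizing $\eps$ gives $\int|\nabla u|\lesssim T^\gamma\sqrt{\|u\|_\infty(\text{err}+T^{-1})}$, i.e.\ $\sqrt{\|u\|_\infty}$, not $\|u\|_\infty$), and then bootstrap to $\int|\nabla u|^2$ via the uniform Lipschitz bound \eqref{eq:1st_der_F}. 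But then the dependence on $\|F_N-\bF_N\|_{L^\infty}$ is through a square root, and carrying this through Steps~2--3 yields an exponent like $(\cK_{CM,N}/M^\beta)^{2/11}$ rather than the $2/7$ required by \eqref{eq:cvg_rate}. In short, the difference-quotient approach converts sup-norm concentration of the function into concentration of $\nabla F_N$ only at the cost of a square root, whereas the direct integration by parts does not lose that square root; to prove the stated rate you need the latter.

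A minor point: the optimal $\eta$ is not ``a negative power of $N$'' but $\eta=(N^{-1/8}+(\cK_{CM,N}/M^\beta)^{1/2})^{4/7}$, which depends on $N$ through both the $N^{-1/8}$ from the approximate equation and the concentration $\cK_{CM,N}$.
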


\smallskip

\begin{proof}

Step 1.
For $N\in\N$, we set
\begin{align}\label{eq:def_r_N}
    r_N = \partial_t\bF_N - \H\big(\nabla \bF_N\big).
\end{align}
For $\delta>0$, define $\phi_\delta:\R\to [0,\infty)$ by
\begin{align}\label{eq:def_phi_delta}
    \phi_\delta(s)=(\delta+s^2)^\frac{1}{2},
\end{align}
which serves as a smooth approximation of the absolute value.
Since $\bF_N$ is Lipschitz uniformly in $N$ due to \eqref{eq:1st_der_bF}, we can set $M =\|f\|_{\mathrm{Lip}}\vee \sup_{N\in\N}\|\bF_N\|_{\mathrm{Lip}}$. Then, we apply Lemma~\ref{lemma:weak_comp} to $\bF_N, f, M, \phi_\delta$, and any choice of $T,\eta$ to see that
\begin{align}\label{eq:dt_J_N}
    \frac{\d}{\d t}J_\delta(t)\leq  \int_{D_t}\phi'_\delta\big(\bF_N -f \big)|r_N|\leq \int_{D_t}|r_N|,\quad\forall t\in[0,T],
\end{align}
where 
\begin{align}\label{eq:def_J(t)}
    J_\delta(t)  = \int_{D_t} \phi_\delta\big(\bF_N-f\big)(t,h)\d h,
\end{align}
for $D_t$ given in \eqref{eq:D_t_uniqueness}. Also recall the definition of $R$ in Lemma~\ref{lemma:weak_comp}.

\medskip

Step 2. We estimate $\int_{D_t}|r_N|$.
Due to the definition of $r_N$ in \eqref{eq:def_r_N}, Proposition~\ref{Prop:approx_HJ} gives an upper bound for $|r_N|^2$. Hence, writing
\begin{align}\label{eq:gamma}
    \gamma = K(K+1)/2,
\end{align}
we have
\begin{align}
    \int_{D_t}|r_N|& \leq |D_t|^\frac{1}{2}\bigg(\int_{D_t} |r_N|^2\bigg)^\frac{1}{2}\nonumber\\
    &\leq CT^{\gamma/2}\bigg(N^{-\frac{1}{4}}\int_{D_t}\kappa(h)\big(\Delta \bF_N+C|h^{-1}|\big)^{\frac{1}{4}}\d h+\int_{D_t}\E\big|\nabla F_N-\nabla \bF_N\big|^2 \d h\bigg)^\frac{1}{2}.\label{eq:int_rn_up_bdd_1}
\end{align}
Here and henceforth, we absorb the constant $R$ in the definition of $D_t$ in \eqref{eq:D_t_uniqueness} into the constant $C$. To bound the first integral in \eqref{eq:int_rn_up_bdd_1}, recall the definition of $\kappa(h)$ in \eqref{eq:def_kappa(h)}, use the definition of $D_t$ and invoke H\"older's inequality to see
\begin{align*}
    \int_{D_t}\kappa(h)\big(\Delta \bF_N+C|h^{-1}|\big)^{\frac{1}{4}}\d h \leq C\eta^{-1} T|D_t|^\frac{3}{4} \bigg(\int_{D_t}\Delta \bF_N+|h^{-1}|\bigg)^\frac{1}{4}.
\end{align*}
In view of \eqref{eq:1st_der_bF}, using integration by parts, we have
\begin{align*}
    \int_{D_t}\Delta\bF_N \leq CT^{\gamma-1}.
\end{align*}
The integral $\int_{D_t}|h^{-1}|$ is bounded by $C\eta^{-1}T^\gamma $. Therefore, we obtain
\begin{align*}
    \int_{D_t}\kappa(h)\big(\Delta \bF_N+C|h^{-1}|\big)^{\frac{1}{4}}\d h \leq C\eta^{-\frac{5}{4}}T^{1+\gamma}.
\end{align*}

To avoid heavy notation, let us write
\begin{align}\label{eq:notation_K,L}
    \bcK=\frac{\cK_{RT,n}}{T^\beta},\qquad \cL=\cL_{\psi,RT,n}.
\end{align}
Here, $\beta$ is given in the assumption of Theorem~\ref{thm:general_cvg_bF}.
For the last integral in \eqref{eq:int_rn_up_bdd_1}, we will show in Step~4 that
\begin{align}\label{eq:derivative_concentration}
    \E\int_{D_t}\big|\nabla( F_N -  \bF_N)\big|^2\leq CT^{\gamma+\beta}\eta^{-\frac{3}{2}}\bcK .
\end{align}
These estimates imply that
\begin{align}\label{eq:int_r_N_bound}
    \int_{D_t}|r_N| \leq CT^{\gamma+\frac{\beta\vee1}{2}}\eta^{-\frac{3}{4}}\big(N^{-\frac{1}{8}}+\bcK^\frac{1}{2}\big).
\end{align}

\medskip

Step 3. We estimate $J_\delta(t)$, extend the integration from over $D_t$ to $\S^K_{+,R(T-t)}$ (defined in \eqref{eq:def_SM}), and conclude the result.
Use \eqref{eq:int_r_N_bound} and \eqref{eq:dt_J_N} to see
\begin{align}\label{eq:J_delta_to_0}
    J_\delta(t) \leq J_\delta(0)+CT^{\alpha}\eta^{-\frac{3}{4}}\big(N^{-\frac{1}{8}}+\bcK^\frac{1}{2}\big),\quad t\in[0,T],
\end{align}
where we set
\begin{align}\label{eq:alpha}
    \alpha = \gamma+\frac{\beta\vee1}{2}+1.
\end{align}
Recall definitions \eqref{eq:def_L}, \eqref{eq:def_phi_delta} and \eqref{eq:def_J(t)}. Hence, for $t=0$, we have 
\begin{align*}
   \lim_{\delta\to0} J_\delta(0)= \int_{D_0}\big|\bF_N(0,h)-f(0,h)\big|\d h \leq CT^\gamma \cL.
\end{align*}
Sending $\delta\to0$ in \eqref{eq:J_delta_to_0} and using the above display, we derive that
\begin{align*}
    \sup_{t\in[0,T]}\int_{D_t}\big|\bF_N(t,h)-f(t,h)\big|\d h\leq C T^{\alpha}\Big(\cL+\eta^{-\frac{3}{4}}\big(N^{-\frac{1}{8}}+\bcK^\frac{1}{2}\big)\Big).
\end{align*}

\smallskip

Due to \eqref{eq:1st_der_bF} and the fact that $\bF_N(0,0)=0$, we have $|\bF_N(t,h)|\leq C(t+|h|)$ uniformly in $N$. By $\bF_N(0,0)=0$ and the assumption on $\psi$ in Theorem~\ref{thm:general_cvg_bF}, we can see $\psi(0)=0$. Since $f(0,\cdot)=\psi$ and the definition of weak solutions requires $f$ to be Lipschitz, we have $|f(t,h)|\leq C(t+|h|)$. In addition, the measure of the set $\S^K_{+,R(T-t)}\setminus D_t$ is bounded by $CT^{\gamma-1}\eta$. Hence, we have
\begin{align*}
    \sup_{t\in[0,T]}\int_{\S^K_{+,R(T-t)}\setminus D_t}\big|\bF_N(t,h) - f(t,h)\big|\d h\leq \sup_{t\in[0,T]}\int_{\S^K_{+,R(T-t)}\setminus D_t}C T \leq CT^\gamma\eta,
\end{align*}
Therefore, we obtain
\begin{align*}
    \sup_{t\in[0,T]}\int_{\S^K_{+,R(T-t)}}\big|\bF_N(t,h)-f(t,h)\big|\d h\leq C T^{\alpha}\Big(\eta+\cL+\eta^{-\frac{3}{4}}\big(N^{-\frac{1}{8}}+\bcK^\frac{1}{2}\big)\Big).
\end{align*}
Let us now specify $T$ and $\eta$. We set $T$ proportional to $M$ to ensure $[0,M]\times \SM\subset \{(t,h):t\in[0,T],\ h\in \S^K_{+,R(T-t)}\}$. Inserting this $T$ and $\eta = (N^{-\frac{1}{8}}+\bcK^\frac{1}{2})^\frac{4}{7}$ into the above display to see
\begin{align}\label{eq:est_cvg}
    \sup_{t\in[0,M]}\int_{\S^K_{+,M}}\big|\bF_N(t,h)-f(t,h)\big|\d h \leq CM^{\alpha}\big(\cL+N^{-\frac{1}{14}}+\bcK^\frac{2}{7}\big).
\end{align}
Recall the notation \eqref{eq:gamma}, \eqref{eq:notation_K,L} and \eqref{eq:alpha}. This gives the desired result \eqref{eq:cvg_rate}.

\medskip

Step 4. To complete the proof, it remains to verify \eqref{eq:derivative_concentration}.
Integrating by parts, we have
\begin{align}
    \int_{D_t}\big|\nabla(F_N &- \bF_N)\big|^2 = \int_{\partial D_t}(F_N-\bF_N)\nabla(F_N-\bF_N)\cdot \mathbf{n} - \int_{D_t}(F_N-\bF_N)\Delta(F_N-\bF_N)\nonumber\\
    &\leq \|F_N-\bF_N\|_{L^\infty([0,RT]\times \S^K_{+,RT})}\bigg(\int_{\partial D_t}\big|\nabla(F_N - \bF_N)\big|+\int_{D_t}\big|\Delta(F_N-\bF_N)\big|\bigg),\label{eq:IBP_step}
\end{align}
Let us estimate the last integral. The lower bound \eqref{eq:2nd_der_bF} shows $\Delta \bF_N\geq 0$, and the lower bound \eqref{eq:2nd_der_F} implies that
\begin{align*}
    \Delta F_N + CN^{-\frac{1}{2}} |Z||h^{-1}|^\frac{3}{2}\geq 0.
\end{align*}
These yield
\begin{align*}
    \int_{D_t}\big|\Delta(F_N-\bF_N)\big|&\leq \int_{D_t} \big|\Delta F_N\big|+\big|\Delta\bF_N\big|\\
    &\leq CT^\gamma N^{-\frac{1}{2}} \eta^{-\frac{3}{2}}|Z|+\int_{D_t} \big(\Delta F_N + \Delta\bF_N\big)  .
\end{align*}
Applying integration by parts to the last integral and using \eqref{eq:1st_der_bF} and \eqref{eq:1st_der_F}, we can see that
\begin{align*}
     \int_{ D_t} \big(\Delta F_N + \Delta\bF_N\big) \leq \int_{\partial D_t}\big|\nabla F_N|+ |\nabla \bF_N|\leq CT^{\gamma-1} \big(1 + N^{-\frac{1}{2}}\eta^{-\frac{1}{2}}|Z|\big).
\end{align*}

This display also serves as a bound for the first integral in \eqref{eq:IBP_step}. Insert the above two displays into \eqref{eq:IBP_step} to get
\begin{align*}
    \int_{D_t}\big|\nabla(F_N - \bF_N)\big|^2\leq CT^\gamma\|F_N-\bF_N\|_{L^\infty([0,RT]\times \S^K_{+,RT})} \eta^{-\frac{3}{2}}\Big (1+N^{-\frac{1}{2}}|Z|\Big).
\end{align*}
Recall \eqref{eq:def_K} and \eqref{eq:notation_K,L}. Take expectations on both sides of this inequality and invoke the Cauchy--Schwarz inequality to conclude \eqref{eq:derivative_concentration}.

\end{proof}

\subsection{Existence of weak solutions}\label{section:existence}

To complete the proof of Theorem~\ref{thm:general_cvg_bF}, we need the following existence result.

\begin{Prop}\label{prop:existence}
Under the assumptions in Theorem~\ref{thm:general_cvg_bF}, there is a unique weak solution $f$ to \eqref{eq:HJ_eqn} with $f(0,\cdot)=\psi$.
\end{Prop}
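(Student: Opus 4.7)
The plan is to obtain $f$ as the limit of a Cauchy sequence and then verify the conditions of Definition~\ref{Def:weak_sol}.

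The first step is to show $\{\bF_N\}_N$ is Cauchy in $L^\infty_t L^1_h([0,M]\times \S^K_{+,M})$ for every $M\geq 1$. I would adapt the argument of Proposition~\ref{prop:cvg_assume_existence} by applying Lemma~\ref{lemma:weak_comp} to the pair $(\bF_N,\bF_{N'})$ in place of $(\bF_N,f)$: both functions lie in $\cA_\H$ by assumption and are Lipschitz uniformly in $N$ by \eqref{eq:1st_der_bF}. The residual in the lemma becomes $r=r_N-r_{N'}$, where $r_N:=\partial_t\bF_N-\H(\nabla\bF_N)$, so $|r|\leq|r_N|+|r_{N'}|$ and each $\int_{D_t}|r_N|$ is controlled exactly as in Step~2 of the proof of Proposition~\ref{prop:cvg_assume_existence} using Proposition~\ref{Prop:approx_HJ} and the concentration quantity $\cK_{CM,N}$. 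The only new ingredient is bounding the initial gap $J_\delta(0)\leq CT^\gamma\bigl(\cL_{\psi,CM,N}+\cL_{\psi,CM,N'}\bigr)$ via the triangle inequality through $\psi$. The assumptions $\cK_{M,N}\to 0$ and $\cL_{\psi,M,N}\to 0$ close the argument.

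Let $f$ denote the resulting limit. The uniform Lipschitz bound passes to $f$, and combined with $L^1$ convergence it upgrades to local uniform convergence in $\R_+\times \S^K_{++}$ by Arzel\`a--Ascoli. The initial condition $f(0,\cdot)=\psi$ follows from $\cL_{\psi,M,N}\to 0$. For the membership $f(t,\cdot)\in\cA_\H$, the convex, monotone, and Lipschitz properties are all inherited from $\bF_N(t,\cdot)$ under pointwise limits, while the distributional inequality $\nabla\cdot\cD\H(\nabla f)\geq 0$ is obtained by pairing with an arbitrary nonnegative smooth test function compactly supported in $\S^K_{++}$ and passing to the limit, relying on the a.e.\ convergence of $\nabla\bF_N$ justified below together with the uniform bound on $\cD\H(\nabla\bF_N)$.

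The main obstacle will be to verify the pointwise-a.e.\ equation $\partial_t f=\H(\nabla f)$. Proposition~\ref{Prop:approx_HJ} gives $r_N\to 0$ in $L^1_{\mathrm{loc}}(\R_+\times\S^K_{++})$, but this alone does not allow one to pass to the limit in the nonlinear term $\H(\nabla\bF_N)$. The key observation is that each $\bF_N(t,\cdot)$ is convex (as an element of $\cA_\H\subset\cA$) and each $\bF_N(\cdot,h)$ is convex in $t$ by \eqref{eq:2nd_dt_bF}. Local uniform convergence of convex functions implies pointwise a.e.\ convergence of their classical gradients wherever the limit is differentiable, which holds a.e.\ by Rademacher's theorem. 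Thus $\partial_t\bF_N\to\partial_t f$ and $\nabla\bF_N\to\nabla f$ a.e.; continuity of $\H$ together with the uniform bound on $\nabla\bF_N$ then yields $\H(\nabla\bF_N)\to\H(\nabla f)$ a.e., which combined with $r_N\to 0$ closes the equation. Uniqueness is already supplied by Proposition~\ref{Prop:uniq_HJ}.
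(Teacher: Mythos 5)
Your proposal is correct and follows essentially the same route as the paper's proof: show $(\bF_N)$ is Cauchy by applying Lemma~\ref{lemma:weak_comp} to the pair $(\bF_N,\bF_{N'})$, then verify the limit is a weak solution by using the convexity of $\bF_N$ in $t$ and in $h$ (from \eqref{eq:2nd_dt_bF} and \eqref{eq:2nd_der_bF}) together with Rademacher's theorem to pass to the limit a.e.\ in the derivatives and in the nonlinear term. The only cosmetic differences are that the paper upgrades the $L^1$ Cauchy property to local uniform convergence via the mollification argument of Remark~\ref{Rem:infinity_norm} rather than Arzel\`a--Ascoli, and it closes the a.e.\ equation by bounded convergence rather than by quoting $L^1_{\mathrm{loc}}$ decay of $r_N$, but both are minor variants of the same argument.
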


\begin{proof}

The uniqueness part follows from Proposition~\ref{Prop:uniq_HJ}. Hence, we only need to prove the existence.
We first show that $(\bF_N)_{N\in\N}$ is a Cauchy sequence in the local uniform topology and then verify that the limit is a weak solution. 

\medskip

Step 1. We show that the sequence $(\bF_N)_{N\in\N}$ is Cauchy. We proceed similarly as in the previous subsection. Recall the definition of $r_N$ in \eqref{eq:def_r_N} and $\phi_\delta$ in \eqref{eq:def_phi_delta}. Let $N,N'\in\N$. Now, setting $M = \sup_{N\in\N}\|\bF_N\|_{\mathrm{Lip}}$ and applying Lemma~\ref{lemma:weak_comp} to $\bF_N$ and $\bF_{N'}$, we obtain
\begin{align*}
    \frac{\d}{\d t}J_\delta(t) \leq \int_{D_t}\phi'_\delta\big(\bF_N-\bF_{N'}\big)|r_N-r_{N'}|\leq \int_{D_t}|r_N|+|r_{N'}|
\end{align*}
where 
\begin{align*}
    J_\delta(t) = \int_{D_t}\phi_\delta\big(\bF_N-\bF_{N'}\big)(t,h)\d h.
\end{align*}

The rest follows exactly the same procedure after \eqref{eq:def_J(t)} in the previous section. The only difference is that we have more terms due to the presence of $\bF_{N'}$, but they are treated in the same way as for $\bF_N$. Similar to \eqref{eq:est_cvg}, one can see that eventually we obtain
\begin{align*}
    \sup_{t\in[0,M]}\int_{S^K_{+,M}}\big|\bF_N(t,h)-\bF_{N'}(t,h)\big|\d h \leq CM^\alpha\Big(\cL_{\psi,CM,N}+N^{-\frac{1}{14}}+ (\cK_{CM,N}/M^\beta)^\frac{2}{7}\\
      +\cL_{\psi,CM,N'}+ {N'}^{-\frac{1}{14}}+(\cK_{CM,N'}/M^\beta)^\frac{2}{7}\Big).
\end{align*}
Hence, by the assumption of Theorem~\ref{thm:general_cvg_bF} on the decay of $\cK_{M,N}$ and $\cL_{\psi,M,N}$, we know that $(\bF_N)_{N\in\N}$ is Cauchy in local $L^\infty_tL^1_h$. Due to the argument in Remark~\ref{Rem:infinity_norm}, we can upgrade this to $(\bF_N)_{N\in\N}$ being Cauchy in local $L^\infty_tL^\infty_h$. Let us denote the limit by $f$.

\bigskip

Step 2. We verify that $f$ is a weak condition by checking that each property listed in Definition~\ref{Def:weak_sol} is satisfied by $f$ and that $f(0,\cdot)=\psi$.

\smallskip

Firstly, we verify that $f$ is Lipschitz and satisfies the initial condition.
Since $\bF_N$ is Lipschitz uniformly in $N$ due to \eqref{eq:1st_der_bF}, we can conclude that $f$ is Lipschitz. Due to the assumption $\lim_{N\to\infty}L_{\psi,M,N}=0$, we have $f(0,\cdot)=\psi$.

\smallskip

Next, we show that $f(t,\cdot)\in\cA_\H$ for every $t\geq 0$. By \eqref{eq:2nd_dt_bF} and \eqref{eq:2nd_der_bF}, we have that both $\bF_N$ and $f$ are convex in the temporal variable and convex in the spacial variable. It is well known that convexity implies convergence of derivatives at each point of differentiability. The Lipschitzness of $f$ and Rademacher's theorem imply that $f$ is differentiable almost everywhere (a.e.). Hence, we can deduce that $(\partial_t, \nabla)\bF_N$ converges to $(\partial_t, \nabla)f$ pointwise a.e. Since $\bF_N(t,\cdot) \in \cA_\H$ for every $t$ and $N$, the claim can be verified by passing to the limit.

\smallskip

Lastly, we show that $f$ satisfies \eqref{eq:HJ_eqn} a.e.
Since $\bF_N$ is Lipschitz uniformly in $N$ due to \eqref{eq:1st_der_bF} and $\H$ is continuous, the bounded convergence theorem implies that, for any compact $B\subset \S^K_{++}$ and $t$ a.e., 
\begin{align*}
    \int_B\Big|\partial_t f - \H(\nabla f)\Big|(t,h)\d h = \lim_{n\to\infty}\int_B\Big|\partial_t \bF_N - \H(\nabla \bF_N)\Big|(t,h)\d h.
\end{align*}
We want to show that the right hand side is zero. Recall the definition of $D_t$ in \eqref{eq:D_t_uniqueness}. By choosing $T$ and $\delta $ in $D_t$ suitably, we can ensure $B\subset D_t$. Then, by \eqref{eq:def_r_N}, \eqref{eq:int_r_N_bound} and the assumption $\lim_{N\to\infty}\cK_{M,N}=0$ in the statement of Theorem~\ref{thm:general_cvg_bF}, we conclude that the right hand side of the above display is zero. Since $B$ and $t$ are arbitrary, we conclude that $\partial_t f - \H(\nabla f)=0$ a.e.

\end{proof}

\section{Viscosity solutions of Hamilton--Jacobi equations}\label{section:vis_sol}

In this section, we give the precise definition of viscosity solutions. After that, we prove the comparison principle which ensures the uniqueness of solutions. In addition, we verify that the Hopf formula is a solution. Classical references include \cite{evans2010partial,crandall1992user}. See also \cite{bardi1984hopf,lions1986hopf}. Here, we follow the approach in \cite{mourrat2020nonconvex}.

\smallskip

A function $f:\R_+\times\S^K_+\to\R$ is said to be nondecreasing if $f(t,x)- f(t',x')\geq 0$ whenever $t\geq t'$ and $x-x'\in\S^K_+$. A function $\psi:\S^K_+\to\R$ is said to be nondecreasing if $\psi(x)-\psi(x')\geq0$ whenever $x-x'\in\S^K_+$.

\begin{Def}\label{def:vs}
\leavevmode
\begin{enumerate}
    \item \label{item:vs_1} A nondecreasing continuous function $f:\R_+\times \S^K_+\to \R$ is a viscosity subsolution to \eqref{eq:HJ_eqn} if for every $(t,x) \in (0,\infty)\times \S^K_+$ and every smooth $\phi:(0,\infty)\times \S^K_+$ such that $f-\phi$ has a local maximum at $(t,x)$, we have
\begin{align*}
    \begin{cases}
    \big(\partial_t \phi - \H(\nabla\phi)\big)(t,x)\leq 0, \quad &\text{if }x\in \S^K_{++},\\
     \nabla\phi(t,x) \in\S^K_+,  \quad &\text{if }x\in \S^K_+\setminus\S^K_{++}.
    \end{cases}
\end{align*}

\item \label{item:vs_2} A nondecreasing continuous function $f:\R_+\times \S^K_+\to \R$ is a viscosity supersolution to \eqref{eq:HJ_eqn} if for every $(t,x) \in (0,\infty)\times \S^K_+$ and every smooth $\phi:(0,\infty)\times \S^K_+$ such that $f-\phi$ has a local minimum at $(t,x)$, we have
\begin{align*}
    \begin{cases}
    \big(\partial_t \phi - \H(\nabla\phi)\big)(t,x)\geq 0, \quad &\text{if }x\in \S^K_{++},\\
    \partial_t \phi (t,x) - \inf \H(q)\geq 0,\quad ,\quad &\text{if }x\in \S^K_+\setminus\S^K_{++},
    \end{cases}
\end{align*}
where the infimum is taken over all $q\in \big(\nabla \phi(t,x) +\S^K_+\big)\cap \S^K_+  $ and $  |q|\leq \|f\|_{\mathrm{Lip}}$.

\item A nondecreasing continuous function $f:\R_+\times \S^K_+\to \R$ is a viscosity solution to \eqref{eq:HJ_eqn} if $f$ is both a viscosity subsolution and supersolution.

\end{enumerate}

\end{Def}

\medskip

\begin{Rem}

The restriction $|q|\leq \|f\|_{\mathrm{Lip}}$ under the infimum in Definition~\ref{def:vs}~\eqref{item:vs_2} can be replaced by $|q|\leq \|f\|_{\mathrm{Lip}}+c$ for any $c\geq 0$. 
Indeed, since $f$ is assumed to be Lipschitz, we can always restrict $\H$ to the set $\{q\in\S^K_+:\ |q|\leq c'\}$ without altering the equation \eqref{eq:HJ_eqn} as long as $c'\geq \|f\|_{\mathrm{Lip}}$. Aside from this heuristic, one can straightforwardly check that the choice of $c$ does not affect the results in this and the next sections.

\end{Rem}

\begin{Rem}
The only properties of $\H$ used in this section are the positivity $\H\geq 0$, local Lipschitzness as in \eqref{eq:H(a)-H(b)} and nondecreasingness given by Lemma~\ref{lemma:DH_psd}. The following two propositions are still valid for general $\H$ satisfying these three properties.
\end{Rem}

\medskip

\begin{Prop}[Comparison principle]\label{prop:comp_principle}
Let $u$ be a subsolution and $v$ be a supersolution of \eqref{eq:HJ_eqn}. Assume $u$ and $v$ are Lipschitz. Then, we have
\begin{align*}
    \sup_{\R_+\times \S^K_+} (u-v) =\sup_{\{0\}\times\S^K_+}(u-v).
\end{align*}
\end{Prop}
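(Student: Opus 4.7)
I would use the classical doubling-of-variables method, adapted to handle (i) the non-compactness of $\R_+\times\S^K_+$, and (ii) the split subsolution/supersolution conditions on the boundary $\partial\S^K_+=\S^K_+\setminus\S^K_{++}$. Suppose for contradiction that the supremum on the left strictly exceeds the one on the right, so that $m:=\sup(u-v)-\sup_{t=0}(u-v)>0$. Set $L:=\|u\|_{\mathrm{Lip}}\vee\|v\|_{\mathrm{Lip}}$.

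My first move is to exploit nondecreasingness to get rid of the subsolution's boundary condition. For $\eta>0$ small, let $u_\eta(t,x):=u(t,x+\eta I)$. Since $x+\eta I\in\S^K_{++}$ for every $x\in\S^K_+$, pulling back test functions shows that $u_\eta$ is a viscosity subsolution on $\R_+\times\S^K_+$ in the strong interior sense $\partial_t\phi-\H(\nabla\phi)\leq0$ at every test point, with no boundary complication. By monotonicity and Lipschitzness of $u$, $\|u_\eta-u\|_\infty\leq L\eta$, so it suffices to prove the comparison $u_\eta\leq v+O(\eta)$ and then let $\eta\to0$.

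Next, I would double variables with penalties. On $(\R_+\times\S^K_+)^2$, consider
\begin{equation*}
\Phi(t,s,x,y)=u_\eta(t,x)-v(s,y)-\frac{1}{2\epsilon}\bigl(|t-s|^2+|x-y|^2\bigr)-\delta\bigl(|x|^2+|y|^2\bigr)-\theta t,
\end{equation*}
with $\theta,\delta,\epsilon>0$ small. Coercivity in $(x,y)$ from the $\delta$-term plus Lipschitzness yields a maximizer $(\bar t,\bar s,\bar x,\bar y)$, and the standard doubling lemma gives $|\bar x-\bar y|/\epsilon,\,|\bar t-\bar s|/\epsilon\leq C_L$ together with $|\bar x-\bar y|,\,|\bar t-\bar s|\to0$ as $\epsilon\to0$ (with $\theta,\delta$ fixed); for $\theta,\delta$ small enough, the assumed gap $m>0$ forces $\bar t>0$. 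Testing $u_\eta$ and $v$ against the usual quadratic test functions with $p:=(\bar x-\bar y)/\epsilon$ produces
\begin{equation*}
\tfrac{\bar t-\bar s}{\epsilon}+\theta-\H\bigl(p+2\delta\bar x\bigr)\leq0,\qquad \tfrac{\bar t-\bar s}{\epsilon}-G\bigl(p-2\delta\bar y\bigr)\geq0,
\end{equation*}
where $G(r)=\H(r)$ if $\bar y\in\S^K_{++}$, and otherwise $G(r)=\inf\{\H(q):q\in(r+\S^K_+)\cap\S^K_+,\,|q|\leq L\}$ by Definition~\ref{def:vs}\eqref{item:vs_2}.

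In the interior case $\bar y\in\S^K_{++}$, subtracting the two inequalities and using the local Lipschitzness of $\H$ on bounded sets of $\S^K$ gives $\theta\leq C_L\,\delta(|\bar x|+|\bar y|)$; since the penalty $\delta(|x|^2+|y|^2)$ keeps $|\bar x|,|\bar y|$ bounded uniformly in $\epsilon$, letting first $\epsilon\to0$ and then $\delta\to0$ contradicts $\theta>0$. In the boundary case $\bar y\in\partial\S^K_+$, I would insert $q:=p+2\delta\bar x+\rho I$ (for small $\rho>0$) into the infimum defining $G$: the inclusion $q-(p-2\delta\bar y)=2\delta(\bar x+\bar y)+\rho I\in\S^K_+$ holds since $\bar x,\bar y\in\S^K_+$, and $q\in\S^K_+$ and $|q|\leq L$ follow for suitable $\rho$ (using the remark after Definition~\ref{def:vs} permitting the constraint $|q|\leq L+c$). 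Then $G(p-2\delta\bar y)\leq\H(p+2\delta\bar x+\rho I)$ and the same Lipschitz-of-$\H$ argument, with an extra error $C_L\rho$, yields the same contradiction after sending $\rho,\epsilon,\delta\to0$ in order.

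\textbf{Expected main obstacle.} The delicate part is the boundary case: the supersolution's infimum is over a $\S^K_+$-constrained set with a norm bound, and one must exhibit an admissible test matrix $q$ that simultaneously (a) lies above $p-2\delta\bar y$ in the positive semidefinite order, (b) lies in $\S^K_+$, and (c) obeys the norm constraint $|q|\leq L$ up to the flexibility noted in the remark. The shift $u\mapsto u_\eta$ in Step 1 is what makes (a)--(c) achievable by the candidate $q=p+2\delta\bar x+\rho I$; without the shift, $p+2\delta\bar x$ itself would only be known to lie in $\S^K_+$ when $\bar x\in\partial\S^K_+$, and matching both boundary conditions simultaneously would be substantially harder.
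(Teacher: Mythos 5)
Your overall strategy matches the paper's (argue by contradiction, perturb the subsolution, double variables with a quadratic penalty, split into interior and boundary cases, and use the monotonicity of $\H$ from Lemma~\ref{lemma:DH_psd}), but three steps contain genuine gaps.

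First, the penalty $-\theta t-\delta(|x|^2+|y|^2)$ does not make $\Phi$ coercive in time, so the maximizer $(\bar t,\bar s,\bar x,\bar y)$ need not exist. Since $u_\eta$ and $v$ are Lipschitz with constant $L$, and $v$ is nondecreasing in $t$, one only has $u_\eta(t,x)-v(s,y)\leq C+L(|x|+|y|)+Lt$; the term $|t-s|^2/(2\epsilon)$ controls only $|t-s|$, so $\Phi$ can grow like $(L-\theta)t$ as $t,s\to\infty$ together. You need either a finite horizon with a barrier (the paper subtracts $\delta/(T-t)$) or a penalty that actually dominates linear growth in $t$; the paper also builds a $Kt$-term into the localization $\eta$ precisely so that $\partial_t\eta$ dominates the error coming from $\H$.

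Second, the claim that $u_\eta(t,x)=u(t,x+\eta I)$ is a viscosity subsolution in the strong interior sense at \emph{every} test point, including $x\in\partial\S^K_+$, is not justified. When $x\in\partial\S^K_+$, the pulled-back test function $\tilde\phi(t',z)=\phi(t',z-\eta I)$ gives a local maximum of $u-\tilde\phi$ at $(t,x+\eta I)$ only over the one-sided set $\eta I+\S^K_+$, which is strictly smaller than a full $\S^K_+$-neighborhood of $x+\eta I\in\S^K_{++}$; the interior subsolution condition for $u$ cannot be invoked directly. The paper instead sets $u_\eps(t,x)=u(t,x)+\eps\tr(x)-C\eps t$, obtains the strict lower bound $u_\eps(t,x+\delta y)-u_\eps(t,x)\geq\eps\delta\tr(y)$, and uses it in Step~2 of the proof of \eqref{eq:u-v-eta} to force the maximizer $x_\alpha$ into $\S^K_{++}$, so the subsolution boundary case never arises.

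Third, the boundary case for $\bar y$ has the inequality in the wrong direction. From your two viscosity inequalities one gets $\theta\leq\H(p+2\delta\bar x)-G(p-2\delta\bar y)$, so one needs a \emph{lower} bound on $G$ (equivalently an upper bound on $\H(p+2\delta\bar x)-G$). Inserting $q=p+2\delta\bar x+\rho I$ into the infimum yields $G(p-2\delta\bar y)\leq\H(p+2\delta\bar x+\rho I)$, which is an \emph{upper} bound on $G$ and hence gives no information. The correct move (Step~4 of the paper) is to take the minimizer $a$ realizing $G(p-2\delta\bar y)=\H(a)$, use $a-(p-2\delta\bar y)\in\S^K_+$, $a\in\S^K_+$, $|a|\leq L$, and the monotonicity of $\H$ to write
\begin{align*}
\H(p+2\delta\bar x)=\H\big((p-2\delta\bar y)+2\delta(\bar x+\bar y)\big)\leq\H\big(a+2\delta(\bar x+\bar y)\big),
\end{align*}
so that $\theta\leq\H\big(a+2\delta(\bar x+\bar y)\big)-\H(a)\leq C_L\,\delta(|\bar x|+|\bar y|)\to0$.
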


\begin{Prop}[Hopf formula]\label{Prop:hopf_viscosity}
Suppose $\psi:\S^K_+\to\R$ is convex, Lipschitz and nondecreasing. Let $f$ be given in \eqref{eq:f_Hopf_0}. Then $f$ is Lipschitz and is a viscosity solution to \eqref{eq:HJ_eqn} with initial condition $f(0,\cdot)=\psi$.
\end{Prop}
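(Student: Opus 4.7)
The plan is to first rewrite \eqref{eq:f_Hopf_0} in the Legendre-dual form $f(t,x) = \sup_{z \in \S^K_+}\{z\cdot x - \psi^*(z) + t\H(z)\}$, where the inner infimum has been carried out via the Fenchel conjugate $\psi^*$ of $\psi$ on $\S^K_+$. Since $\psi$ is Lipschitz, $\psi^*$ is finite only on $\{|z|\le \|\psi\|_{\mathrm{Lip}}\}$, so the supremum effectively ranges over a compact subset of $\S^K_+$. From this representation, Lipschitz continuity of $f$ is immediate: in $x$ with constant at most $\|\psi\|_{\mathrm{Lip}}$, and in $t$ with constant bounded by the maximum of $\H$ on that compact set. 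The initial condition $f(0,\cdot) = \psi^{**} = \psi$ follows from the Fenchel--Moreau biconjugation theorem on the cone $\S^K_+$ proved in Appendix~\ref{section:biconjugation}, applicable since $\psi$ is convex, Lipschitz, and nondecreasing. Nondecreasingness of $f$ holds because each affine function $g_z(t,x) := z\cdot x - \psi^*(z) + t\H(z)$ is itself nondecreasing in $(t,x)$: in the spatial variable by Lemma~\ref{lemma:psd} since $z\in \S^K_+$, and in the temporal variable because $\H(z) = AA^\intercal \cdot z^{\otimes p} \ge 0$.

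For the subsolution property I would observe that each $g_z$ is a classical (hence viscosity) solution of \eqref{eq:HJ_eqn} since $\partial_t g_z - \H(\nabla g_z) = \H(z) - \H(z) = 0$, and that the pointwise supremum of a family of viscosity subsolutions with locally uniform Lipschitz constant is itself a viscosity subsolution; this yields the interior inequality $\partial_t\phi - \H(\nabla\phi) \le 0$ at any local maximum of $f-\phi$ with $x_0 \in \S^K_{++}$. For the boundary requirement that $\nabla\phi(t_0,x_0) \in \S^K_+$ when $x_0 \in \S^K_+\setminus\S^K_{++}$, I would use that $f$ is convex in $(t,x)$ as a supremum of affine functions, which forces $f$ to be differentiable at any such contact point with $\nabla_x f(t_0,x_0) = \nabla\phi(t_0,x_0)$; the nondecreasingness of $f$ then gives $\nabla\phi(t_0,x_0) \in \S^K_+$.

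The harder part will be the supersolution property at interior points. At a local minimum of $f-\phi$ at $(t_0,x_0)$ with $x_0 \in \S^K_{++}$, convexity of $f$ implies $(\partial_t\phi, \nabla\phi)(t_0,x_0) \in \partial f(t_0,x_0)$, and Danskin's envelope theorem identifies this subdifferential as the closed convex hull of $\{(\H(z),z) : z \in A(t_0,x_0)\}$, where $A(t_0,x_0)$ denotes the argmax set of $z \mapsto g_z(t_0,x_0)$. Writing the gradient of $\phi$ as a convex combination $\sum_i \lambda_i(\H(z_i), z_i)$, the required inequality $\partial_t\phi - \H(\nabla\phi) \ge 0$ becomes $\sum_i\lambda_i\H(z_i) \ge \H(\sum_i\lambda_i z_i)$. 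This is Jensen's inequality when $\H$ is convex, but the statement allows general $\H$ of the form \eqref{eq:H} which need not be convex; here I expect to follow the approach of \cite{mourrat2020nonconvex} by extracting a single $z^*\in A(t_0,x_0)$ with $(\partial_t\phi, \nabla\phi)(t_0,x_0) = (\H(z^*), z^*)$, exploiting the cone-monotonicity of $\cD\H$ from Lemma~\ref{lemma:DH_psd} together with the structure of the extreme points of $\partial f$. The relaxed boundary inequality involving $\inf\H(q)$ over $q \in (\nabla\phi + \S^K_+)\cap \S^K_+$ with $|q| \le \|f\|_{\mathrm{Lip}}$ will be extracted from the same subdifferential analysis by choosing $q$ from $A(t_0,x_0)$.
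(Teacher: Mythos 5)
Your Lipschitz and initial-condition arguments are fine and match the paper. The subsolution verification is also essentially the paper's, recast as the stability of viscosity subsolutions under pointwise supremum with a uniform Lipschitz bound. However, your justification of the boundary case is incorrect: at $x_0\in\S^K_+\setminus\S^K_{++}$ the point lies on the boundary of the domain, and a local maximum of $f-\phi$ there does \emph{not} force the convex function $f$ to be differentiable (you can only vary $x$ in directions belonging to the tangent cone of $\S^K_+$ at $x_0$). The correct and more elementary route, which is what the paper does, is to bound one-sided directional derivatives: for $v\in\S^K_+$, convexity gives $D_v f(t_0,x_0)\geq 0$ (nondecreasingness) and the local-max inequality gives $D_v f(t_0,x_0)\leq \nabla\phi(t_0,x_0)\cdot v$, so $\nabla\phi(t_0,x_0)\cdot v\geq 0$ for all $v\in\S^K_+$, and Lemma~\ref{lemma:psd} concludes.

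The supersolution part is where your proposal genuinely diverges and where there is a real gap. You identify the joint subdifferential of $f$ via Danskin's theorem as $\overline{\conv}\{(\H(z),z):z\in A(t_0,x_0)\}$, so that the needed inequality $\partial_t\phi\geq\H(\nabla\phi)$ reduces to $\sum_i\lambda_i\H(z_i)\geq\H(\sum_i\lambda_i z_i)$. That works under convexity of $\H$, but for the general $\H$ of \eqref{eq:H} (which the statement covers) this inequality is false in general, and your proposed repair --- extracting a single $z^*$ with $(\partial_t\phi,\nabla\phi)=(\H(z^*),z^*)$ --- is not available: the contact gradient is just \emph{some} element of the convex hull, with no reason to be an extreme point, and cone-monotonicity of $\cD\H$ does not rescue it. The paper's route is materially different: it first establishes the \emph{semigroup/dynamic programming property} $f(t+s,\cdot)=(f^*(t,\cdot)-s\H)^*$ (whose proof itself relies on the Fenchel--Moreau identity of Appendix~\ref{section:biconjugation} applied several times, not just once for the initial condition), considers only the spatial subdifferential $\partial f(t_0,\cdot)(x_0)$, decomposes $\nabla\phi=a+b$ with $a\in\S^K_+\cap\partial f$ coming from gradient limits and $b$ in the outer normal cone via Lemma~\ref{lemma:subdiff}, and then derives $\partial_t\phi\geq\H(a)$ from the semigroup identity by conjugating an affine minorant. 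Monotonicity of $\H$ (Lemma~\ref{lemma:DH_psd}) then passes from $\H(a)$ to $\H(\nabla\phi)$ in the interior, and to $\inf\H(q)$ at the boundary. Your proposal never mentions the semigroup property, which is the load-bearing ingredient that replaces Jensen's inequality and makes the argument work without convexity of $\H$; without it, your supersolution argument only covers the convex-$\H$ case.
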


\subsection{Proof of Proposition~\ref{prop:comp_principle}}
Let us argue by contradiction and assume
\begin{align}\label{eq:contradiction}
    \sup_{\R_+\times \S^K_+} (u-v) >\sup_{\{0\}\times\S^K_+}(u-v).
\end{align}
We start by modifying $u$. For $\eps \in(0,1)$ to be specified later, we set
\begin{align*}
    u_\eps (t,x) = u(t,x)+\eps \tr(x)-C\eps t,\quad \forall (t,x)\in\R_+\times \S^K_+
\end{align*}
where $\tr$ stands for the trace. Let $I$ be the $K\times K$ identity matrix. By choosing $C$ large and then $\eps$ small, we can ensure that, if $u_\eps-\phi$ attains a local maximum at $(t,x)$, we have
\begin{align}\label{eq:u_eps_eqn}
    \begin{cases}
    (\partial_t \phi - \H(\nabla\phi ))(t,x)\leq -2\eps, \quad &\textrm{if } x\in\S^K_{++}
    \\(\nabla \phi - \eps I)(t,x)\in\S^K_+,\quad &\textrm{if } x\in \S^K_+\setminus\S^K_{++}.
    \end{cases}
\end{align}
Since $u(t,\cdot)$ in nondecreasing for each $t$, we also have
\begin{align}\label{eq:u_eps_lower_bd}
    u_\eps(t,x+y) - u_\eps(t,x)\geq \eps\tr(y),\quad y \in \S^K_+.
\end{align}
With $\epsilon$ sufficiently small chosen, \eqref{eq:contradiction} still holds with $u$ replaced by $u_\eps$. Next, we replace $u_\eps$ by $u_\eps-\frac{\delta}{T-t}$, where $\delta$ is chosen small enough and $T>1$ is chosen large enough by \eqref{eq:contradiction} to ensure that
\begin{align}\label{eq:sup_[0,T)}
    \sup_{[0,T)\times \S^K_+} (u_\eps-v) >\sup_{\{0\}\times\S^K_+}(u_\eps-v).
\end{align}
Also, note that \eqref{eq:u_eps_eqn} still holds.
In addition, we have, for every $M>0$,
\begin{align}\label{eq:-infty_at_T}
    \lim_{\eta\to 0}\sup_{[T-\eta,T)\times \S^K_{+,M}}u_\eps = -\infty.
\end{align}

\smallskip

Next, we introduce some parameters and auxiliary functions. By the formula for $\H$ in \eqref{eq:H}, there is a constant $C_\H$ such that 
\begin{align}\label{eq:H(a)-H(b)}
    \big|\H(a)- \H(b)\big|\leq C_\H|a-b|(|a|+|b|)^{p-1},\quad \forall a, b\in \S^K_+.
\end{align}
Let
\begin{align}\label{eq:K_L}
    L = 1+\|u\|_{\mathrm{Lip}}+\|v\|_\mathrm{Lip},\qquad K = C_\H(4L)^{p-1}.
\end{align}
Due to the definition of $u_\eps$, the following holds for all $(t,x)\in[0,T)\times \S^K_+$,
\begin{align}\label{eq:u_eps<L|x|}
    u_\eps(t,x) \leq C+L|x|,\qquad 
    \|\nabla u_\eps(t,x)\|\leq L.
\end{align}
By \eqref{eq:sup_[0,T)}, there is $(\bar t, \bar x)$ such that
\begin{align}\label{eq:bar_x}
    \big(u_\eps-v\big)(\bar t, \bar x ) >\sup_{\{0\}\times\S^K_+}(u_\eps-v).
\end{align}
Let us set
\begin{align*}
    R = \big(|\bar x|^2+1\big)^\frac{1}{2}+ K\bar t.
\end{align*}
Take $\chi:\R\to \R_+$ to be a smooth function satisfying
\begin{align}\label{eq:chi_prop}
    (r-1)_+ \leq \chi(r)\leq r_+,\quad |\chi'(r)|\leq 1, \quad \forall r \in\R,
\end{align}
where the positive sign in the subscript indicates taking the positive part. The function $\chi$ can be viewed as a smoothed version of $r\mapsto r_+$.
Define $\eta:[0,T)\times\S^K_+\to \R$ by 
\begin{align}\label{eq:eta(t,x)}
    \eta(t,x) = 2L\chi \Big((|x|^2+1)^\frac{1}{2} + K t - R\Big).
\end{align}
We claim
\begin{align}\label{eq:u-v-eta}
    \sup_{[0,T)\times \S^K_+} (u_\eps-v-\eta) =\sup_{\{0\}\times\S^K_+}(u_\eps-v-\eta).
\end{align}
On the other hand, due to \eqref{eq:bar_x} and the definitions of $R$ and $\eta$, we have 
\begin{align*}
    \sup_{[0,T)\times \S^K_+} (u_\eps-v-\eta)\geq \big(u_\eps-v\big)(\bar t, \bar x )>\sup_{\{0\}\times\S^K_+}(u_\eps-v)\geq \sup_{\{0\}\times\S^K_+}(u_\eps-v-\eta),
\end{align*}
which contradicts \eqref{eq:u-v-eta}. Hence, the proof is complete once the claim \eqref{eq:u-v-eta} is verified.

\subsubsection{Proof of \eqref{eq:u-v-eta}}
Again we argue by contradiction and assume
\begin{align}\label{eq:contradiction_2}
    \sup_{[0,T)\times \S^K_+} (u_\eps-v-\eta) >\sup_{\{0\}\times\S^K_+}(u_\eps-v-\eta).
\end{align}
We are going to employ the classical trick of ``doubling the variables''. For $\alpha \in (0,1)$, we introduce
\begin{align*}
    \Psi_\alpha (t,x,t,x')=u_\eps(t,x) - v(t',x')- \phi_\alpha(t,x,t',x'),\quad \forall t\in[0,T),\ t'>0,\ x,x'\in\S^K_+.
\end{align*}
where
\begin{align*}
    \phi_\alpha(t,x,t',x')= \frac{1}{2\alpha}\big(|t-t'|^2+|x-x'|^2\big)+\eta(t,x).
\end{align*}

Step 1. We show that there exists a maximizer $(t_\alpha, x_\alpha,t'_\alpha,x'_\alpha)$ of $\Psi_\alpha$, and they converge as $\alpha \to 0$. To start, we seek an upper bound for $\Psi_\alpha$. The nondecreasingness of $v$ gives $-v(t,x)\leq -v(0,0)$. The definition of $\eta$ in \eqref{eq:eta(t,x)} shows $\eta(t,x)\geq 2L( |x|+Kt - R-1)$. Using these and the first inequality in \eqref{eq:u_eps<L|x|}, we have
\begin{align*}
    \Psi_\alpha(t,x,t',x') \leq C-L|x| - \frac{1}{2\alpha}\big(|t-t'|^2+|x-x'|^2\big) - 2KLt.
\end{align*}
Here and henceforth, we absorb $L$, $K$ and $R$ into $C$. Now, one can see the existence of a maximizer $(t_\alpha, x_\alpha,t'_\alpha,x'_\alpha)$. Then, we have
\begin{align*}
    \Psi_\alpha (t_\alpha, x_\alpha, t'_\alpha, x'
_\alpha) \geq \Psi_\alpha(0,0,0,0)= u_\eps(0,0)-v(0,0).
\end{align*}
Combine the above two displays to see that, for all $\alpha<1$, these points $(t_\alpha, x_\alpha,t'_\alpha,x'_\alpha)$ lie in a bounded set and
\begin{align*}
        |t_\alpha - t'_\alpha |^2 + |x_\alpha - x'_\alpha|^2 \leq C\alpha.
\end{align*}
By passing to a subsequence, we can assume there is $t_0$ and $x_0$ such that $t_\alpha, t'_\alpha \to t_0$ and $x_\alpha,x'_\alpha \to x_0$ as $\alpha \to 0$.

\smallskip
In view of \eqref{eq:-infty_at_T}, we must have $t_0<T$. The maximality of $(t_\alpha,x_\alpha,t'_\alpha,x'_\alpha)$ yields
\begin{align*}
    &\big(u_\eps-v-\eta\big)(t_0,x_0)\leq \sup_{[0,T)\times \S^K_+}(u_\eps-v-\eta )\\
    &\quad \leq \Psi_\alpha(t_\alpha,x_\alpha,t'_\alpha,x'_\alpha)\leq u_\eps(t_\alpha,x_\alpha)-v(t'_\alpha,x'_\alpha)-\eta(t_\alpha,x_\alpha).
\end{align*}
Take $\alpha\to0$ and use the continuity of $u_\eps$, $v$ and $\eta$ to see 
\begin{align*}
    \big(u_\eps-v-\eta\big)(t_0,x_0) = \sup_{[0,T)\times \S^K_+}(u_\eps-v-\eta).
\end{align*}
By \eqref{eq:contradiction_2}, we must have $t_0> 0$. Henceforth, we fix a sufficiently small $\alpha$ so that $t_\alpha,t'_\alpha>0$.

\bigskip

Step 2. For this fixed $\alpha$, note that
\begin{align}\label{eq:(t,x)_mapsto}
    (t,x)\mapsto u_\eps(t,x) - v(t'_\alpha,x'_\alpha) -\phi_\alpha(t,x,t'_\alpha, x'_\alpha)
\end{align}
has a local maximum at $(t_\alpha, x_\alpha)$. We argue that
\begin{align}\label{eq:x_alpha_pd}
    x_\alpha \in \S^K_{++}.
\end{align}
Otherwise, there is $y\in \S^K_+$ with $|y|=1$ such that
\begin{align}\label{eq:y.x_alpha=0}
    y\cdot x_\alpha = 0.
\end{align}
Under this assumption, we want to derive a contradiction to the fact that the maximum is achieved $(t_\alpha, x_\alpha)$. For $\delta >0$, using \eqref{eq:u_eps_lower_bd}, we can see
\begin{align}
    u_\eps(t_\alpha, x_\alpha+\delta y) - \phi_\alpha(t_\alpha, x_\alpha+\delta y,t'_\alpha, x'_\alpha) - \Big(u_\eps(t_\alpha, x_\alpha) - \phi_\alpha(t_\alpha, x_\alpha,t'_\alpha, x'_\alpha)\Big)\nonumber\\
    \geq \eps\delta \tr(y)-\frac{1}{2\alpha}\Big(2\delta y\cdot(x_\alpha -x_\alpha') +\delta^2\Big)-\Big(\eta(t_\alpha,x_\alpha+\delta y ) - \eta(t_\alpha,x_\alpha) \Big).\label{eq:u-phi-(u-phi)}
\end{align}
The definition of $\eta$ in \eqref{eq:eta(t,x)} allows us to compute
\begin{align}\label{eq:grad_eta}
     \nabla\eta(t,x) = \frac{x}{(|x|^2+1)^\frac{1}{2}}2L\chi'\bigg((|x|^2+1)^\frac{1}{2}+Lt-R\bigg).
\end{align}
By \eqref{eq:y.x_alpha=0}, we have $y\cdot \nabla\eta(t_\alpha,x_\alpha)=0$. This along with Taylor's theorem implies
\begin{align*}
    \eta(t_\alpha,x_\alpha+\delta y ) - \eta(t_\alpha,x_\alpha)=\mathcal{O}(\delta^2).
\end{align*}
Apply this, \eqref{eq:y.x_alpha=0} and $y\cdot x'_\alpha \geq 0$ to see that \eqref{eq:u-phi-(u-phi)} is bounded below by 
\begin{align*}
     \eps \delta\tr(y) - \mathcal{O}(\delta^2).
\end{align*}
Since $\eps>0$ and $\tr(y)>0$, this is strictly positive for $\delta$ small. This contradicts the fact that \eqref{eq:(t,x)_mapsto} achieves a local maximum at $(t_\alpha, x_\alpha)$. By contradiction, we must have \eqref{eq:x_alpha_pd}.

\smallskip

Using this, \eqref{eq:u_eps_eqn}, and the maximality of \eqref{eq:(t,x)_mapsto} at $(t_\alpha,x_\alpha)$, we obtain
\begin{gather}
    \frac{1}{\alpha}(t_\alpha - t'_\alpha) +\partial_t\eta(t_\alpha, x_\alpha) - \H\bigg(\frac{1}{\alpha}(x_\alpha-x'_\alpha)+\nabla\eta(t_\alpha, x_\alpha)\bigg)\leq -2\eps.\label{eq:test_<0}
\end{gather}

\bigskip

Step 3. Still for this fixed $\alpha$, the function
\begin{align*}
    (t',x')\mapsto v(t',x') - u_\eps(t_\alpha,x_\alpha) + \phi_\alpha(t_\alpha,x_\alpha,t',x')
\end{align*}
attains a local minimum at $(t'_\alpha ,x'_\alpha)$. Note that $-\nabla_{x'}\phi_\alpha(t_\alpha,x_\alpha,t',x')=\frac{1}{\alpha}(x_\alpha -x')$. We claim that there is $a \in \S^K_+$ such that
\begin{gather}
    a - \frac{1}{\alpha}(x_\alpha - x'_\alpha) \in \S^K_+,\label{eq:a_in_x+S^K}\\
    |a|\leq \|v\|_\mathrm{Lip},\label{eq:|a|<|v|}\\
    \frac{1}{\alpha}(t_\alpha-t'_\alpha) -\H(a)\geq -\eps.\label{eq:test>0}
\end{gather}

If $x'_\alpha\in \S^K_{++}$, then by setting $a = \frac{1}{\alpha}(x_\alpha - x'_\alpha)$, we clearly have \eqref{eq:a_in_x+S^K}. In this case, the local minimum is achieved at an interior point $x'_\alpha$. Since $v$ is nondecreasing, we can see $\frac{1}{\alpha}(x_\alpha-x'_\alpha)\in\S^K_+$ and thus $a\in\S^K_+$. Then \eqref{eq:test>0} follows from the definition of supersolutions. If $v(t'_\alpha,\cdot)$ is differentiable at $x'_\alpha$, then the minimality at $x'_\alpha$ implies $\frac{1}{\alpha}(x_\alpha - x'_\alpha) = \nabla v(t'_\alpha,x'_\alpha)$ and hence \eqref{eq:|a|<|v|} holds. If $x'_\alpha$ is not a point of differentiability, then \eqref{eq:|a|<|v|} still holds by a regularizing argument.

\smallskip

If $x'_\alpha\not\in \S^K_{++}$, namely $x'_\alpha\in\S^K_+\setminus \S^K_{++}$, then the existence of $a$ and \eqref{eq:a_in_x+S^K}--\eqref{eq:test>0} directly follow from the boundary condition in the definition of supersolutions.

\bigskip

Step 4. We compare \eqref{eq:test_<0} with \eqref{eq:test>0} to derive a contradiction. To start, we derive some estimates. For simplicity, we write
\begin{align*}
    b =\nabla\eta(t_\alpha, x_\alpha).
\end{align*}
Recall the definition of the constant $L$ in \eqref{eq:K_L}. Due to \eqref{eq:grad_eta} and the second inequality in \eqref{eq:chi_prop}, we get $|b|\leq 2L$. By \eqref{eq:|a|<|v|}, we have $|a|\leq L$. These along with \eqref{eq:H(a)-H(b)} yield
\begin{align*}
    \big|\H(a+b)-\H(a)\big|\leq C_\H|b|(4L)^{p-1}.
\end{align*}
Using the definition of $\eta$ in \eqref{eq:eta(t,x)}, we can see
\begin{align*}
    \partial_t\eta(t_\alpha, x_\alpha) \geq K|\nabla\eta(t_\alpha, x_\alpha)| = K|b|.
\end{align*}
The above two displays together with the definition of $K$ in \eqref{eq:K_L} imply 
\begin{align}\label{eq:>0}
    \partial_t\eta(t_\alpha, x_\alpha) -\H(a+b)+\H(a)\geq 0.
\end{align}
On the other hand, from \eqref{eq:test_<0} and \eqref{eq:a_in_x+S^K}, using the monotonicity of $\H $ in Lemma~\ref{lemma:DH_psd}, we have
\begin{align*}
    \frac{1}{\alpha}(t_\alpha - t'_\alpha) +\partial_t\eta(t_\alpha, x_\alpha) - \H(a+b)\leq -2\eps.
\end{align*}
Subtract \eqref{eq:test>0} from the above display to obtain
\begin{align*}
    \partial_t\eta(t_\alpha, x_\alpha) -\H(a+b)+\H(a) \leq -\eps.
\end{align*}
This contradicts \eqref{eq:>0} and thus the proof of \eqref{eq:u-v-eta} is complete.

\subsection{Proof of Proposition~\ref{Prop:hopf_viscosity}}
Let us rewrite the Hopf formula \eqref{eq:f_Hopf_0} as
\begin{align}
    f(t,x) 
    & = \sup_{z\in \S^K_+}\{z\cdot x - \psi^*(z)+t\H(z)\}\label{eq:f_Hopf_1}\\
    & = (\psi^*-t\H)^*(x).\label{eq:f_Hopf_2}
\end{align}
Here the superscript $*$ denotes the Fenchel transformation over $\S^K_+$, namely,
\begin{align}\label{eq:def_u*}
    u^*(x) = \sup_{y\in \S^K_+}\{y\cdot x - u(y)\},\quad \forall x\in \S^K_+.
\end{align}

We check the following in order: nondecreasingness, initial condition, semigroup property (or dynamic programming principle), Lipschitzness, $f$ being a subsolution, and $f$ being a supersolution.

\subsubsection{Nondecreasingness}\label{section:nondecreas}
Since the supremum in \eqref{eq:f_Hopf_1} is taken over $\S^K_+$, it is clear from Lemma~\ref{lemma:psd} that $f(t,\cdot) $ is nondecreasing. By the formula of $\H$ in \eqref{eq:H} and the Schur product theorem, we have $\H(z)\geq 0$ for all $z\in\S^K_+$. Hence, from the formula \eqref{eq:f_Hopf_0}, we can see $f$ is also nondecreasing in $t$.

\subsubsection{Verification of the initial condition}
The desired identity
\begin{align*}
    \psi(x) = \sup_{z\in \S^K_+}\inf_{y\in \S^K_+}\big\{z\cdot (x-y) +\psi(x) \big\}=\psi^{**}(x),\quad \forall x\in\S^K_+.
\end{align*}
follows from a version of Fenchel--Moreau identity stated in Proposition~\ref{prop:biconjugate}

\subsubsection{Semigroup property}

Let $f$ be given in \eqref{eq:f_Hopf_1}. We want to show, for all $s\geq 0$,
\begin{align*}
    f(t+s,x) &= \sup_{z\in\S^K_+}\inf_{y\in\S^K_+}\big\{z\cdot(x-y)+f(t,y) +s\H(z)\big\},
\end{align*}
or, in a more compact form,
\begin{align}\label{eq:semigroup}
    f(t+s,\cdot)= \big(f^*(t,\cdot)-s\H\big)^*.
\end{align}
In view of the Hopf formula \eqref{eq:f_Hopf_2}, this is equivalent to
\begin{align}\label{eq:semigroup_property}
    \big(\psi^* - (t+s)\H\big)^* = \big((\psi^*-t\H)^{**}-s\H\big)^*.
\end{align}
From the definition of the Fenchel transform \eqref{eq:def_u*}, it can be seen that, for any $u$, 
\begin{align}\label{eq:u**leq_u}
    u^{**}\leq u.
\end{align}
Since the Fenchel transform is order-reversing, \eqref{eq:u**leq_u} implies that
\begin{align}\label{eq:semigruop_one_inequality}
    \big((\psi^*-t\H)^{**}-s\H\big)^*\geq \big(\psi^* - (t+s)\H\big)^* .
\end{align}

\smallskip

To see the other direction, we use \eqref{eq:u**leq_u} to get
\begin{align*}
    \frac{s}{t+s}\psi^* + \frac{t}{t+s}\big(\psi^*-(t+s)\H\big)^{**}\leq \psi^*-t\H.
\end{align*}
For any $u$, it can be readily checked that $u^*$ is convex and lower semi-continuous. Using the argument in Section~\ref{section:nondecreas}, we can deduce that $u^*$ is non-decreasing. Hence the left hand side of the above display satisfies the condition in Proposition~\ref{prop:biconjugate}. Therefore, taking the Fenchel transform twice in the above display and applying Proposition~\ref{prop:biconjugate}, we have
\begin{align*}
    \frac{s}{t+s}\psi^* + \frac{t}{t+s}\big(\psi^*-(t+s)\H\big)^{**}\leq (\psi^*-t\H)^{**}.
\end{align*}
Reorder terms and then use \eqref{eq:u**leq_u} to see
\begin{align*}
    \big(\psi^*-(t+s)\H\big)^{**} - (\psi^*-t\H)^{**}\leq \frac{s}{t}\Big((\psi^*-t\H)^{**}-\psi^*\Big)\leq -s\H.
\end{align*}
This immediately gives
\begin{align*}
    \big(\psi^*-(t+s)\H\big)^{**}\leq (\psi^*-t\H)^{**}-s\H.
\end{align*}
Taking the Fenchel transform on both sides and invoking Proposition~\ref{prop:biconjugate}, we have
\begin{align*}
     \big(\psi^* - (t+s)\H\big)^*\geq  \big((\psi^*-t\H)^{**}-s\H\big)^*.
\end{align*}
Here, we also used the order-reversing property of the Fenchel transform. 
This together with \eqref{eq:semigruop_one_inequality} verifies \eqref{eq:semigroup_property}.

\subsubsection{Lipschitzness}

Since $\psi$ is Lipschitz, we have $\psi^*(z)=\infty$ outside the compact set $\{z\in\S^K_+:|z|\leq \|\psi\|_{\mathrm{Lip}}\}$. This together with \eqref{eq:f_Hopf_1} implies that for each $x\in\S^K_+$, there is $z\in \S^K_+$ with $|z|\leq \|\psi\|_\mathrm{Lip}$ such that
\begin{align*}
    f(t,x) = z\cdot x - \psi^*(z)+t\H(z).
\end{align*}
This yields that, for any $x'\in\S^K_+$,
\begin{align*}
    f(t,x) -f(t,x')\leq z\cdot(x-x') \leq \|\psi\|_\mathrm{Lip}|x-x'|.
\end{align*}
By symmetry, we conclude that $f$ is Lipschitz in $x$, and the Lipschitz coefficient is uniform in $t$.

To show the Lipschitzness in $t$, we fix any $x\in\S^K_+$. Then, we have, for some $z\in\S^K_+$ with $|z|\leq \|\psi\|_\mathrm{Lip}$,
\begin{align*}
    f(t,x)&=z\cdot x-\psi^*(z)+t\H(z)\leq f(t',x) + (t-t')\H(z)\\
    &\leq f(t',x) +|t'-t|\bigg(\sup_{|z|\leq \|\psi\|_\mathrm{Lip}}|\H(z)|\bigg).
\end{align*}
Again by symmetry, the Lipschitzness in $t$ is obtained, and its coefficient is independent of $x$.

\subsubsection{The Hopf formula is a subsolution}

Let $\phi:(0,\infty)\times \S^K_+\to\R$ be smooth. Suppose $f-\phi$ achieves a local maximum at $(t,x)\in (0,\infty)\times \S^K_+$. Since $\psi$ is Lipschitz, we can see $\psi^*$ is infinite outside a compact set. Hence, by \eqref{eq:f_Hopf_1}, there is $\bar z\in\S^K_+$ such that
\begin{align*}
    f(t,x) = \bar z\cdot x - \psi^*(\bar z) + t\H(\bar z).
\end{align*}

For the case $x \in \S^K_{++}$, by \eqref{eq:f_Hopf_1}, we have, for $s\in[ 0,t]$ and $h\in \S^K$ sufficiently small,
\begin{align*}
    f(t,x) \leq f(t-s,x+h)-\bar z\cdot h+s\H(\bar z).
\end{align*}
By the assumption on $\phi$, we have
\begin{align*}
    f(t-s,x+h)-\phi(t-s,x+h)\leq f(t,x)-\phi(t,x).
\end{align*}
for small $s\in[0,t]$ and small $h\in\S^K$. Combine the above two inequalities to get
\begin{align}\label{eq:phi(t,x)_upper_bound_verify_HJ}
    \phi(t,x)-\phi(t-s,x+h) \leq -\bar z\cdot h+s\H(\bar z).
\end{align}
Set $s=0$ and vary $h$ to see
\begin{align}\label{eq:z=grad_f_vis}
    \bar z = \nabla \phi(t,x).
\end{align}
Then, we set $h=0$ in \eqref{eq:phi(t,x)_upper_bound_verify_HJ}, take $s\to 0$ and insert \eqref{eq:z=grad_f_vis} to obtain
\begin{align*}
    \partial_t \phi(t,x)-\H(\nabla \phi(t,x))\leq 0.
\end{align*}

If $x\in \S^K_+\setminus\S^K_{++}$, then \eqref{eq:phi(t,x)_upper_bound_verify_HJ} still holds for $h\in \S^K_+$. Set $s=0$ and vary $h$, we can see $a\cdot \nabla \phi(t,x)\geq a\cdot \bar z$ for all $a\in\S^K_+$. Since $\bar z\in \S^K_+$, Lemma~\ref{lemma:psd} implies that $ \nabla \phi(t,x)\in\S^K_+$.

\subsubsection{The Hopf formula is a supersolution}

The idea of proof in this part can be seen in \cite[Proof of Proposition 1]{lions1986hopf}. Let $(t,h)\in(0,\infty)\times \S^K_+$ be a local minimum point for $f-\phi$. Due to \eqref{eq:f_Hopf_1}, $f$ is convex in both variables. Since $\S^K_+$ is also convex, we have, for all $(t',x')\in (0,\infty)\times \S^K_+$ and all $\lambda \in[ 0,1]$,
\begin{align*}
    f(t',x')-f(t,x)\geq \frac{1}{\lambda}\Big(f\big(t+\lambda(t'-t),x+\lambda(x'-x)\big) - f(t,x)\Big).
\end{align*}
For any fixed $(t',x')$ and sufficiently small $\lambda$, the assumption that $f-\phi$ has a local minimum at $(t,x)$ gives
\begin{align*}
    f\big(t+\lambda(t'-t),x+\lambda(x'-x)\big) - f(t,x)\geq \phi\big(t+\lambda(t'-t),x+\lambda(x'-x)\big) - \phi(t,x).
\end{align*}
Using the above two displays and setting $\lambda\to0$, we obtain
\begin{align}\label{eq:sup_sol_supp_0}
    f(t',x')-f(t,x)\geq r(t'-t) + \big(\nabla\phi(x,t)\big)\cdot(x'-x)
\end{align}
where
\begin{align}\label{eq:r_q}
    r=\partial_t\phi(x,t).
\end{align}

\smallskip

Before proceeding, we make a digression to convex analysis. Most of the definitions and results we need are given in Appendix~\ref{section:biconjugation}. For each fixed $t\geq 0$, it can be seen from \eqref{eq:f_Hopf_1} that $f(t,\cdot)$ is convex. Setting $t'=t$ in \eqref{eq:sup_sol_supp_0}, we have $\nabla\phi(x,t)\in\partial f(t,x)$ which stands for the subdifferential of $f(t,\cdot)$ at $x$. Its definition is given in \eqref{eq:def_subdiff}. Invoking Lemma~\ref{lemma:subdiff}, we can express
\begin{align}\label{eq:q=a+b}
    \nabla \phi(x,t) =a +b
\end{align}
where $b\in \nn(x)$, the outer normal cone at $x$, defined in \eqref{eq:def_normal_cone}; and $a $ belongs to the closed convex hull of limit points of the form $ \lim_{n\to\infty}\nabla f(t,x_n)$ where $\lim_{n\to\infty}x_n=x$ and $f(t,\cdot)$ is differentiable at each $x_n$. Since $f$ is nondecreasing and Lipschitz, we have
\begin{align}\label{eq:a_in_S}
    a\in\S^K_+,\qquad |a|\leq \|f\|_\mathrm{Lip}.
\end{align}
By the definition of $\nn(x)$ and Lemma~\ref{lemma:psd}, it can seen that $-b\in\S^K_+$. This along with \eqref{eq:q=a+b} implies
\begin{align}\label{eq:a>q}
    a \in \nabla \phi(t,x)+\S^K_+.
\end{align}
By Lemma~\ref{lemma:subdiff}, the definition of $a$ and an easy observation that $0\in\nn(x)$, we can deduce that $a\in\partial f(t,x)$, which due to the definition of subdifferential in \eqref{eq:def_subdiff} further implies
\begin{align*}
    f(t,x')-f(t,x)\geq a \cdot(x'-x),\quad \forall x'\in\S^K_+.
\end{align*}
Set $x'=x$ in \eqref{eq:sup_sol_supp_0} and use the above display to get
\begin{align}\label{eq:sup_sol_supp}
    f(t',x')-f(t,x)\geq r(t'-t) + a\cdot(x'-x),\quad \forall t'\geq 0,\, x'\in\S^K_+.
\end{align}

\smallskip

Now, we return to the proof. For each $s\geq 0$, we define
\begin{align*}
    \eta_s(x')=f(t,x)-rs + a\cdot(x'-x),\quad\forall x'\in\S^K_+.
\end{align*}
Setting $t'=t-s$ in \eqref{eq:sup_sol_supp}, for $s\in[0,t]$, we have
\begin{align*}
    f(t-s,x')\geq \eta_s(x'),\quad\forall x'\in\S^K_+.
\end{align*}
Applying the order-reversing property of the Fenchel transform twice, we obtain from the above display that
\begin{align*}
    \big(f^*(t-s,\cdot)-s\H\big)^*\geq  \big(\eta_s^*-s\H\big)^*.
\end{align*}
Due to the semigroup property \eqref{eq:semigroup}, this yields
\begin{align*}
    f(t,\cdot)\geq \big(\eta_s^*-s\H\big)^*,\quad \forall s\in[ 0,t].
\end{align*}
By \eqref{eq:a_in_S} and the definition of the Fenchel transform in \eqref{eq:def_u*}, the above yields
\begin{align*}
    f(t,x)\geq a\cdot x-\eta_s^*(a)+s\H(a).
\end{align*}
On the other hand, using the definition of $\eta_s$, we can compute
\begin{align*}
    \eta_s^*(a)= -f(t,x)+rs+a\cdot x.
\end{align*}
Combine the above two displays with \eqref{eq:r_q} and that these hold for all $s\in [0,t]$ to see
\begin{align*}
    \big(\partial_t\phi-\H(a)\big)(t,x)\geq 0.
\end{align*}

\smallskip

If $x \in \S^K_{++}$, then \eqref{eq:a_in_S}, \eqref{eq:a>q} and the nondecreasingness of $\H$ in Lemma~\ref{lemma:DH_psd} imply $\H(a)\geq \H(\nabla\phi(t,x))$. If $x\in \S^K_+\setminus\S^K_{++}$, then those same ingredients yield $\H(a)\geq \inf\H(q)$ where the infimum is described in \eqref{item:vs_2} in Definition~\ref{def:vs}. These along with the above display verifies that $f$ is a supersolution.

\section{Convergence to the viscosity solution}\label{section:cvg_vis_sol}

The goal of this section is to prove Theorem~\ref{thm:lower}. We first state the main result of this section and deduce Theorem~\ref{thm:lower} from it.

\begin{Prop}\label{Prop:limit_sub_sol}
Under the assumptions in Theorem~\ref{thm:lower}, suppose that a subsequence of $(\bF_N)_{N\in\N}$ converges locally uniformly to some function $f:\R_+\times \S^K_+\to \R$. Then, $f$ is a viscosity subsolution to \eqref{eq:HJ_eqn} with $f(0,\cdot)=\psi$. If $\H$ is convex, then $f$ is also a supersolution and thus the unique viscosity solution to \eqref{eq:HJ_eqn}.
\end{Prop}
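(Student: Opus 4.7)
The plan is to split the argument into three parts: the initial condition, the subsolution property (which does not need convexity of $\H$), and, under convexity of $\H$, the supersolution property. The initial condition $f(0,\cdot)=\psi$ follows at once from the assumed pointwise convergence $\bF_N(0,\cdot)\to\psi$ together with the local uniform convergence of the chosen subsequence. Uniqueness, once both the sub- and supersolution properties are established in the convex case, is immediate from the comparison principle of Proposition~\ref{prop:comp_principle}.

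For the subsolution property, I would use the classical perturbed test function method. Given a smooth $\phi$ and a local maximum $(t_0,x_0)\in(0,\infty)\times\S^K_+$ of $f-\phi$, I first perturb $\phi$ by a small quadratic to make the maximum strict, then use the local uniform convergence along the subsequence to extract points $(t_N,x_N)\to(t_0,x_0)$ at which $\bF_N-\phi$ attains a local maximum. When $x_0\in\S^K_{++}$ the points $(t_N,x_N)$ are interior for $N$ large, so the approximate Hamilton--Jacobi inequality of Proposition~\ref{Prop:approx_HJ} can be used at or near $(t_N,x_N)$: the Laplacian term $\Delta\bF_N$ is controlled above by $\Delta\phi$ in an averaged sense around the maximum using the convexity of $\bF_N$ in $t$ and in $x$ provided by \eqref{eq:2nd_dt_bF} and \eqref{eq:2nd_der_bF}, while the gradient-concentration term $\E|\nabla F_N-\nabla\bF_N|^2$ is handled in $L^1_{\mathrm{loc}}$ by combining the assumption $\cK_{M,N}\to 0$ with the spatial convexity of $F_N$ exactly as in Step~4 of the proof of Proposition~\ref{prop:cvg_assume_existence}. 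Averaging the approximate HJ inequality against a small mollifier centered at $(t_N,x_N)$ and sending first $N\to\infty$ and then the mollification scale to zero yields $\partial_t\phi(t_0,x_0)-\H(\nabla\phi(t_0,x_0))\leq 0$. When $x_0\in\S^K_+\setminus\S^K_{++}$, the nondecreasingness of $\bF_N$ from Lemma~\ref{lemma:basic_est} gives $\nabla\bF_N\in\S^K_+$ wherever defined; the local maximum condition of $\bF_N-\phi$ then forces $\nabla\phi\cdot v\geq \nabla\bF_N\cdot v\geq 0$ for every $v\in\S^K_+$ with $x_N+\varepsilon v\in\S^K_+$, which by Lemma~\ref{lemma:psd} and passage to the limit gives $\nabla\phi(t_0,x_0)\in\S^K_+$.

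For the supersolution property under convexity of $\H$, the pivotal observation is that, because of the identity $\tx\cdot\tx'=\H(x^\intercal x')$ and the $p$-homogeneity of $\H$, the formulas \eqref{eq:dtbFN} and \eqref{eq:grad_bFN} can be rewritten as
\begin{align*}
    \partial_t\bF_N=\E\big\la \H(N^{-1}x^\intercal x')\big\ra,\qquad \nabla\bF_N=\E\big\la N^{-1}x^\intercal x'\big\ra.
\end{align*}
Jensen's inequality applied to the convex extension of $\H$ then yields the pointwise inequality $\partial_t\bF_N\geq\H(\nabla\bF_N)$ almost everywhere on $\R_+\times\S^K_+$, which, combined with the convexity of $\bF_N$ in each variable, promotes to a classical viscosity supersolution property for $\bF_N$ in the interior; standard stability of viscosity supersolutions under local uniform limits transfers it to $f$. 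For the boundary condition at $x_0\in\S^K_+\setminus\S^K_{++}$, I would extract a local minimum $(t_N,x_N)$ of $\bF_N-\phi$ approaching $(t_0,x_0)$; the maximum-type argument used for the subsolution boundary case, applied in reverse, gives $\nabla\bF_N\in\nabla\phi+\S^K_+$, while $|\nabla\bF_N|$ is uniformly bounded by Lemma~\ref{lemma:basic_est}. Hence $\nabla\bF_N$ is admissible in the infimum defining the boundary condition of Definition~\ref{def:vs}~\eqref{item:vs_2}, and the pointwise inequality yields $\partial_t\bF_N\geq\H(\nabla\bF_N)\geq\inf\H$; passing to the limit gives the required boundary inequality for $f$.

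The main obstacle is the interior subsolution case. The approximate HJ inequality in Proposition~\ref{Prop:approx_HJ} is not pointwise small: the gradient-concentration term $\E|\nabla F_N-\nabla\bF_N|^2$ is controlled by the hypothesis $\cK_{M,N}\to 0$ only in an integrated, function-value sense, and $\Delta\bF_N$ is only a bounded Radon measure a priori. Matching the derivatives of $\bF_N$ with those of $\phi$ at the approximate maximum therefore requires mollifying the test function, then using both the spatial convexity of $\bF_N$ to upgrade $L^\infty_{\mathrm{loc}}$ convergence of values to $L^1_{\mathrm{loc}}$ convergence of gradients and the second-order comparison $\Delta\bF_N\leq\Delta\phi$ (in the averaged sense) at the maximum to absorb the Laplacian term. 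By contrast, the supersolution case, the initial condition, and the two boundary cases are essentially direct.
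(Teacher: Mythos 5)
Your treatment of the initial condition, the two boundary cases, and the appeal to the comparison principle for uniqueness all match the paper. For the interior subsolution case, your approach is in the same spirit as the paper's, but the paper mollifies $\bF_N$ itself (producing $G_N$) and works at the maximum of $G_N-\tphi$, with the mollification scale $\delta_N$ explicitly coupled to $N$ so that the error terms coming from $\cK_{M,N}$ and $N^{-1/4}$ and the accuracy of the averaging all vanish simultaneously; the separated order of limits you propose ("first $N\to\infty$, then the mollification scale to zero") is not clearly workable, and your sketch alternates between "averaging the inequality" and "mollifying the test function", which are not the same operation. These can probably be reconciled, so I regard the subsolution sketch as essentially the paper's argument with details left to be filled in (the quantitative estimates \eqref{eq:grad_F_N_concent} and \eqref{eq:bF_N-G_N_grad}).

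The genuine gap is in your supersolution argument. Your pivotal pointwise Jensen inequality,
\begin{align*}
\partial_t\bF_N=\E\big\la\H(N^{-1}x^\intercal x')\big\ra\ \geq\ \H\big(\E\la N^{-1}x^\intercal x'\ra\big)=\H(\nabla\bF_N),
\end{align*}
requires $\H$ to be convex on the convex hull of the set of values $N^{-1}x^\intercal x'$ attained under $\E\la\cdot\ra$. But $x,x'$ are two independent replicas in $\R^{N\times K}$, and $x^\intercal x'$ is generically not symmetric, let alone in $\S^K_+$. The hypothesis of Theorem~\ref{thm:lower} is only that $\H$ (defined on $\S^K_+$ by \eqref{eq:H}) is convex there; for $p\geq 3$ this does not imply convexity of the polynomial $q\mapsto(AA^\intercal)\cdot q^{\otimes p}$ on $\S^K$, let alone on $\R^{K\times K}$. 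The phrase "the convex extension of $\H$" does not resolve this, because any convex extension of $\H|_{\S^K_+}$ to $\R^{K\times K}$ will in general fail to satisfy $\tx\cdot\tx'=\H(x^\intercal x')$ off $\S^K_+$, so the identity you rely on is lost. The paper never asserts the pointwise differential inequality $\partial_t\bF_N\geq\H(\nabla\bF_N)$; instead it averages the two-sided estimate of Proposition~\ref{Prop:approx_HJ} over the small ball $h_N+D_N$ and applies Jensen only to $\fint\H(\nabla\bF_N)\geq\H\big(\fint\nabla\bF_N\big)=\H(\nabla G_N)$, which is legitimate because by \eqref{eq:1st_der_bF_lower} the argument $\nabla\bF_N$ takes values in $\S^K_+$; that is where the convexity hypothesis enters in \eqref{eq:supsol_G_N_lower}. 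Your Jensen step does happen to work in the special model \eqref{eq:H_special} with $p$ even (there $\H$ is a sum of even powers, hence convex on all of $\R^{K\times K}$), but it is not valid under the general hypothesis of Theorem~\ref{thm:lower}. The rest of your supersolution and boundary reasoning would be fine once this step is replaced by the paper's averaged Jensen argument.
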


\begin{Rem}\label{Rem:strong_subsol}
In fact, if $f$ is a subsequential limit of $(\bF_N)_{N\in\N}$, then $f$ satisfies the following: if $f-\phi$ achieves a local maximum at $(t,x)\in(0,\infty)\times \S^K_{++}$ for a smooth function $\phi$, then it holds that
\begin{align*}
    \big(\partial_t \phi - \H(\nabla\phi)\big)(t,x) =0,
\end{align*}
which is stronger than Definition~\ref{def:vs}~\eqref{item:vs_1}.
\end{Rem}

\begin{proof}[Proof of Theorem~\ref{thm:lower}]
By \eqref{eq:2nd_der_bF}, \eqref{eq:1st_der_bF}, \eqref{eq:1st_der_bF_lower} and the assumption that $\bF_N(0,\cdot)$ converges to $\psi$ pointwise, we have that $\psi$ is convex, Lipschitz and nondecreasing. Hence,  Proposition~\ref{Prop:hopf_viscosity} implies that there is a Lipschitz viscosity solution $f$ to the Hamilton--Jacobi equation \eqref{eq:HJ_eqn} with $f(0,\cdot)=\psi$.  Proposition~\ref{prop:comp_principle} ensures the uniqueness.

\smallskip

Since $\bF_N(0,0)=0$ for all $N$ and $(\bF_N)_{N\geq 1}$ is Lipschitz uniformly in $N$ due to \eqref{eq:1st_der_bF}, the Arzel\`a--Ascoli theorem guarantees that any subsequence of $(\bF_N)_{N\geq 1}$ has a further subsequence that converges in the local uniform topology to some function $g$. In addition, we can see that $g$ is Lipschitz. The assumption on $\psi$ in Theorem~\ref{thm:lower} ensures that $g(0,\cdot)=\psi$. Proposition~\ref{Prop:limit_sub_sol} implies that $g$ is a viscosity subsolution to \eqref{eq:HJ_eqn}. The upper bound in Theorem~\ref{thm:lower} then follows from Proposition~\ref{prop:comp_principle}. When $\H$ is convex, using similar arguments, we can obtain an lower bound.
\end{proof}

We prove the subsolution part of Proposition~\ref{Prop:limit_sub_sol} and Remark~\ref{Rem:strong_subsol} in Section~\ref{section:subsol} and the supersolution part of Proposition~\ref{Prop:limit_sub_sol} in Section~\ref{section:supsol}.

\subsection{The limit is a subsolution}\label{section:subsol}

To lighten the notation, we assume $\bF_N$ converges to $f$ locally uniformly. We want to show $f$ is subsolution to \eqref{eq:HJ_eqn}.

First, we consider the case where $f-\phi$ has a local maximum at $(t,h)$ with $t>0$ and $h\in \S^K_+\setminus\S^K_{++}$. Then, there is a sequence $\big((t_N, h_N)\big)_{N\in\N}$ in $ (0,\infty)\times \S^K_+$ such that $(t_N,h_N)$ converges to $(t,h)$ and $\bF_N-\phi$ has a local maximum at $(t_N,h_N)$. Note that $a+h_N\in \S^K_+$ for all $a\in \S^K_+$. So, we can differentiate $\bF_N-\phi$ along any direction $a\in \S^K_+$ to see
\begin{align*}
    a\cdot \nabla\big(\bF_N-\phi\big)(t_N,h_N) \leq 0,\quad \forall a \in \S^K_+.
\end{align*}
In view of \eqref{eq:1st_der_bF_lower}, this implies
\begin{align*}
    a\cdot \nabla \phi(t_N,h_N) \geq 0,\quad \forall a \in \S^K_+.
\end{align*}
Setting $N\to\infty$, by Lemma~\ref{lemma:psd}, we have $\nabla \phi(t,h)\in \S^K_+$, verifying the boundary condition for subsolutions.

\smallskip

Now, we study the case when $f-\phi$ achieves a local maximum at $(t,h)$ with $t>0$ and $h\in\S^K_{++}$. In the following, the constant $C$ is allowed to depend on $t$, $h$, $f$, $\phi$. We set
\begin{gather}
    M=(t\vee|h|)+1, \label{eq:M=(tv|h|)+1}\\
    \gamma = K(K+1)/2, \label{eq:gamma_visc}\\
    \delta_N =\|\bF_N -f \|^\frac{1}{4}_{L^\infty([0,M]\times \S^K_{+,M})} + \cK_{M,N}^\frac{1}{2} , \label{eq:delta_N}
\end{gather}
where $\cK_{M,N}$ is defined in \eqref{eq:def_K} and $\S^K_{+,M}$ is given in \eqref{eq:def_SM}. By the convergence of $\bF_N$ to $f$ and the assumption \eqref{eq:cK_decay_beta}, we have $\lim_{N\to\infty}\delta_N =0$.
Let us introduce 
\begin{align}\label{eq:tphi}
    \tphi(t', h') =\phi (t',h') + |t'-t|^2 + |h'-h|^2.
\end{align}
It is immediate that $f-\tphi$ has a local maximum at $(t,h)$.
Due to \eqref{eq:delta_N}, for all $(t',h')\in [0,M]\times \S^K_{+,M}$, we have
\begin{align*}
    \big(\bF_N - \tphi\big)(t',h') &\leq \big(f - \phi\big) (t',h') - |t'-t|^2-|h'-h|^2+\delta_N^4,\\
    \big(\bF_N-\tphi\big)(t,h) &\geq \big(f-\phi\big)(t,h)-\delta_N^4.
\end{align*}

Since $\bF_N$ converges locally uniformly to $f$, for $N$ large, there is a sequence of $(t_N,h_N)$ in $(0,\infty)\times \S^K_{++}$, at which $\bF_N - \tphi$ attains a local maximum, and which converges to $(t,h)$.
From the above display and the fact that $f-\phi$ attains a local maximum at $(t,h)$, we can deduce that
\begin{align}\label{eq:h-h_N_est}
    |t_N-t|^2 + |h_N-h|^2 \leq 2 \delta_N^4.
\end{align}
By the definition of $(t_N, h_N)$, we also have 
\begin{align}\label{eq:F_N-tphi_1st_der}
    \partial_t \big(\bF_N - \tphi\big)(t_N,h_N) =0, \quad \nabla\big(\bF_N -\tphi\big)(t_N,h_N) =0.
\end{align}

\bigskip

We want to apply Proposition~\ref{Prop:approx_HJ}. However the concentration estimate we have is for $F_N - \bF_N$ not for $\nabla(F_N-\bF_N)$. Therefore, we need to do a local average by introducing
\begin{align}
    \label{eq:D_N} D_N &  = \S^K_{+,\delta_N},\\
    \label{eq:G_N} G_N(t',h') &= |D_N|^{-1}\int_{h'+D_N}\bF_N(t',h'')\d h''.  
\end{align}
It is clear that $G_N$ converges locally uniformly to $f$. Hence, there is $(t'_N, h'_N)\in(0,\infty)\times\S^K_{++}$ converging to $(t,h)$ such that $G_N -\tphi$ has a local maximum at $(t'_N,h'_N)$. Consequently, we have
\begin{gather}
    \label{eq:G_N_1st_der} \partial_t \big(G_N-\tphi\big)(t'_N,h'_N) = 0, \quad  \nabla \big(G_N-\tphi\big)(t'_N,h'_N) = 0,\\
    \label{eq:G_N_2nd_der} a\cdot \nabla\Big( a \cdot \nabla \big(G_N - \tphi\big)\Big)(t'_N,h'_N) \leq 0,\quad \forall a \in \S^K.
\end{gather}
Repeating the argument in the derivation of \eqref{eq:h-h_N_est} yields
\begin{align}\label{eq:h-h_N'_est}
    |t'_N -t|^2 + |h'_N-h|^2 \leq 2 \delta_N^4.
\end{align}

We need the following estimates:
\begin{gather}
    \label{eq:grad_F_N_concent} \int_{h'_N + D_N}\E\Big| \nabla F_N - \nabla \bF_N\Big|^2 (t'_N, h') \d h' \leq C\delta_N^{\gamma+1},\\
    \label{eq:bF_N-G_N_grad}   \int_{h'_N +D_N}\Big|\nabla \bF_N (t'_N, h') - \nabla G_N(t'_N, h'_N)\Big|^2 \d h' \leq C \delta^{\gamma+1}_N.
\end{gather}
From the definition of $\H$ in \eqref{eq:H}, we can see that $|\H(a)-\H(b)|\leq C|a-b|(|a|\vee|b|)^{p-1}$ for all $a,b\in \S^K_+$. By this, Jensen's inequality and \eqref{eq:bF_N-G_N_grad}, we have
\begin{align}\label{eq:compare_H(nabla_F)_H(nabla_G)}
\begin{split}
    &\bigg|\  |D_N|^{-1}\int_{\hN'+D_N}\H\big(\nabla \bF_N(\tN',h')\big)\d h'-\H\big(\nabla G_N(\tN',\hN')\big)\bigg|\\
    &\quad\leq C\bigg(|D_N|^{-1} \int_{h'_N +D_N}\Big|\nabla \bF_N (t'_N, h') - \nabla G_N(t'_N, h'_N)\Big|^2 \d h'\bigg)^\frac{1}{2}\leq C\delta_N^\frac{1}{2}.
\end{split}
\end{align}
Here, we used the following fact due to \eqref{eq:gamma_visc} and \eqref{eq:D_N}
\begin{align*}
    |D_N|=C\delta_N^\gamma.
\end{align*}
Recall the definition of $\kappa$ in \eqref{eq:def_kappa(h)}. Due to $h\in\S^K_{++}$, \eqref{eq:D_N} and \eqref{eq:h-h_N'_est}, we know that $\kappa(h')\leq C$ for all $h'\in h'_N+D_N$ and $N$ large. Take average of $(\partial_t\bFN-\H(\nabla\bFN))(\tN',h')$ over $\hN'+D_N$, and use Proposition~\ref{Prop:approx_HJ} and \eqref{eq:compare_H(nabla_F)_H(nabla_G)} to see
\begin{align*}
    &\Big|\partial_tG_N-\H\big(\nabla G_N\big)\Big|(\tN',\hN')\leq C\delta_N^\frac{1}{2}\\
    &+C\Bigg(N^{-\frac{1}{4}}\fint_{\hN'+D_N}\big(\Delta\bFN(t'_N, h') +1\big)^\frac{1}{4} \d h'+ \fint_{\hN'+D_N}\E\Big| \nabla F_N - \nabla \bF_N\Big|^2 (t'_N, h') \d h' \Bigg)^\frac{1}{2}
\end{align*}
where $\fint_{\hN'+D_N} = |D_N|^{-1}\int_{\hN'+D_N}$. By Jensen's inequality, \eqref{eq:G_N} and \eqref{eq:G_N_2nd_der}, we have
\begin{align*}
    \fint_{\hN'+D_N}\big(\Delta\bFN(\tN',\cdot) +1\big)^\frac{1}{4} \leq \Big(\Delta G_N(\tN',\hN')+1\Big)^\frac{1}{4} \leq C.
\end{align*}
The above two displays along with \eqref{eq:grad_F_N_concent} give
\begin{align*}
    \Big|\partial_tG_N-\H\big(\nabla G_N\big)\Big|(\tN',\hN')\leq C \Big( \delta_N^\frac{1}{2} + N^{-\frac{1}{8}}\Big).
\end{align*}
Using \eqref{eq:h-h_N'_est} and \eqref{eq:G_N_1st_der}, and sending $N$ to $\infty$, we obtain
\begin{align*}
    \partial_t\tphi-\H\big(\nabla \tphi\big)(t,h) = 0.
\end{align*}
Due to \eqref{eq:tphi}, the derivatives of $\tphi$ coincide with those of $\phi$ at $(t,h)$. This finishes the core of the verification of that $f$ is a subsolution and the claim in Remark~\ref{Rem:strong_subsol}.

\bigskip

To complete the proof, we derive \eqref{eq:grad_F_N_concent} and \eqref{eq:bF_N-G_N_grad}.

\begin{proof}[Proof of \eqref{eq:grad_F_N_concent}]
For any smooth $g: \S^K_+ \to \R$ and any $D\subset \S^K_+$ with Lipschitz boundary, integration by parts gives
\begin{align}\label{eq:ibp}
    \int_D |\nabla g|^2 = \int_{\partial D}g\nabla g \cdot \mathbf{n} - \int_D g\Delta g,
\end{align}
where $\mathbf{n}$ is the outer normal on $\partial D$. To lighten our notation, the time variable is always evaluated at $\tN'$ in this proof. Apply \eqref{eq:ibp} to get
\begin{align}
    \int_{h'_N + D_N}\Big| &\nabla F_N - \nabla \bF_N\Big|^2 \leq \|F_N-\bFN\|_{L^\infty(\hN'+D_N)}\nonumber \\
    &\times\bigg(\int_{\partial(h'_N + D_N)}\big|\nabla F_N - \nabla \bF_N\big| + \int_{h'_N + D_N}\big|\Delta F_N - \Delta \bF_N\big|\bigg).\label{eq:concent_grad_est}
\end{align}
By $\lim_{N\to\infty}h'_N=h$ (due to \eqref{eq:h-h_N'_est}), $h\in\S^K_{++}$ and \eqref{eq:D_N}, we have $|h'^{-1}|\leq C$ for all $h'\in h'_N+D_N$ for large $N$,
Using this, \eqref{eq:2nd_der_bF} and \eqref{eq:2nd_der_F}, we get, for all $h'\in h'_N + D_N$,
\begin{align*}
    \big|\Delta F_N - \Delta \bF_N\big| \leq \Delta F_N + \Delta \bF_N + CN^{-\frac{1}{2}}|Z|.
\end{align*}
Applying this and integration by parts to obtain
\begin{align*}
    \int_{h'_N + D_N}\big|\Delta F_N - \Delta \bF_N\big| \leq C\delta_N^\gamma N^{-\frac{1}{2}}|Z|+ \int_{\partial(h'_N + D_N)}\big|\nabla F_N|+|\nabla\bF_N|.
\end{align*}
Then, using this display, \eqref{eq:1st_der_bF} and \eqref{eq:1st_der_F}, we can bound the two integrals in \eqref{eq:concent_grad_est} by $C\delta_N^{\gamma-1}(1 + N^{-\frac{1}{2}}|Z|)$. As a result, by taking expectations and invoking the Cauchy--Schwarz inequality in \eqref{eq:concent_grad_est}, we obtain
\begin{align}\label{eq:to_compare}
    \E\int_{h'_N + D_N}\Big| &\nabla F_N - \nabla \bF_N\Big|^2 \leq C\delta_N^{\gamma-1}\Big( \E \|F_N-\bFN\|_{L^\infty(\hN'+D_N)}^2\Big)^\frac{1}{2}.
\end{align}
Recall that the time variable is evaluated at $t'_N$. By \eqref{eq:M=(tv|h|)+1}, \eqref{eq:h-h_N'_est} and \eqref{eq:D_N}, we have $\{t'_N\}\times(h'_N+D_N)\subset [0,M]\times \S^K_{+,M}$ for large $N$. Hence, the desired result \eqref{eq:grad_F_N_concent} follows from \eqref{eq:delta_N} and the definition \eqref{eq:def_K}.

\end{proof}

\bigskip

\begin{proof}[Proof of \eqref{eq:bF_N-G_N_grad}]
To prepare, we start by showing that, for $h'$ satisfying $|h'-h_N|\leq C^{-1}$,
\begin{align}
    \label{eq:taylor_bF}\Big|\bF_N(t_N, h') - \bF_N(t_N, h_N) - (h'-h_N)\cdot \nabla\bF_N(t_N,h_N)\Big|\leq C|h'-h_N|^2.
\end{align}
By Taylor expansion, we have
\begin{align}\label{eq:bF_N_expansion}
\begin{split}
    \bF_N(t_N,h') &- \bF_N(t_N,h_N) = (h' - \hN)\cdot \nabla \bFN(\tN,\hN)\\
    &\quad +\int_0^1 (1-r) \cD^2_{h'-\hN}\bFN(\tN,\hN+(h'-\hN)r)\d r
\end{split}
\end{align}
where we write 
\begin{align*}
    \cD^2_a \bF_N = a\cdot \nabla\big(a \cdot \nabla \bFN  \big),\quad \forall a \in \S^K.
\end{align*}
A similar equation also holds with $\bF_N$ replaced by $\tphi$. Take the difference of these two equations and use \eqref{eq:F_N-tphi_1st_der} and the fact that $\bFN-\tphi$ has a local maximum at $(\tN,\hN)$ to see
\begin{align*}
    \int_0^1 (1-r) \cD^2_{h'-\hN}\bFN(\tN,\hN+(h'-\hN)r)\d r\\
    \leq \int_0^1 (1-r) \cD^2_{h'-\hN}\tphi(\tN,\hN+(h'-\hN)r)\d r.
\end{align*}
Since $\tphi$ has locally bounded derivatives, by the above display and \eqref{eq:2nd_der_bF}, there is $C$ such that the following holds for all $h'$ with $|h'-\hN|\leq C^{-1}$
\begin{align*}
    \Bigg|\int_0^1 (1-r) \cD^2_{h'-\hN}\bFN(\tN,\hN+(h'-\hN)r)\d r\Bigg|\leq C|h'-\hN|^2.
\end{align*}
Inserting this into \eqref{eq:bF_N_expansion} gives \eqref{eq:taylor_bF}.

\bigskip

Now, we are ready to prove \eqref{eq:bF_N-G_N_grad}. Let us set
\begin{align*}
    g_N(h') = \bFN(\tN',h') - \bFN(\tN',\hN')-(h'-\hN')\cdot \nabla G_N(\tN',\hN').
\end{align*}
Note that, to probe \eqref{eq:bF_N-G_N_grad}, it is sufficient to estimate $\int_{h'_N+D_N}|\nabla g_N|^2$. Using \eqref{eq:1st_der_bF} and \eqref{eq:G_N}, we can see
\begin{align}\label{eq:grad_G_N}
    |\nabla G_N(t',h')|\leq C,\quad \forall t',\ h'.
\end{align}
\begin{align}\label{eq:grad_g_N_est}
    |\nabla g_N(h')|\leq C,\quad \forall h' \in \hN'+D_N.
\end{align}
Apply \eqref{eq:ibp} to $g_N$ to obtain
\begin{align*}
    \int_{\hN'+D_N}|\nabla g_N|^2\leq \|g_N\|_{L^\infty(\hN'+D_N)}\bigg(\int_{\partial(\hN'+D_N)}\big| \nabla g_N\big| +\int_{\hN'+D_N}\big|\Delta g_N\big|\bigg).
\end{align*}
By \eqref{eq:grad_g_N_est}, the first integral on the left is bounded by $C\delta_N^{\gamma-1}$. Since $\Delta g_N = \Delta \bFN(\tN',\cdot)$, by \eqref{eq:2nd_der_bF}, we can see $|\Delta g_N| = \Delta g_N$. Integrating by parts and applying \eqref{eq:grad_g_N_est} again, we deduce that the last integral in the above display is also bounded by $C\delta_N^{\gamma-1}$. Hence, we arrive at
\begin{align}\label{eq:int_|grad_g_N|^2}
    \int_{\hN'+D_N}|\nabla g_N|^2\leq C\delta_N^{\gamma-1} \|g_N\|_{L^\infty(\hN'+D_N)}.
\end{align}

\smallskip

It remains to estimate $\|g_N\|_{L^\infty(\hN'+D_N)}$. We want to compare $g_N$ with 
\begin{align*}
    \bF_N(t_N,h')-\bF_N(t_N,h_N)-(h'-h_N)\cdot\nabla G_N(t_N,h_N).
\end{align*}
To start, using \eqref{eq:G_N}, we can compute, for all $a, t', h'$,
\begin{align}
     a\cdot \nabla G_N(t',h')& = |D_N|^{-1}\int_{D_N} a\cdot \nabla\bFN(t', h'+h'') \d h''\nonumber\\
    \label{eq:grad_G_N_surface} & = |D_N|^{-1}\int_{\partial D_N} \bFN(t', h'+h'')a\cdot \mathbf{n}  \cS(\d h'')
\end{align}
where in the last equality we used integration by parts and $\cS$ denotes the surface measure on $\partial D_N$. 
Now, we estimate
\begin{align}
    &\Big|(h'-h_N)\cdot\nabla G_N(t_N,h_N)-(h'-h'_N)\cdot\nabla G_N(t'_N,h'_N)\Big|\label{eq:inter_compare_g_N}\\ 
    &\leq |h_N-h'_N|\big|\nabla G_N(t_N,h_N)\big|+ \Big|(h'-h'_N)\cdot\Big(\nabla G_N(t_N,h_N)-\nabla G_N(t'_N,h'_N)\Big)\Big|.\nonumber
\end{align}
The first term after the inequality sign is bounded by $|h_N-h'_N|$ due to \eqref{eq:grad_G_N}. Using \eqref{eq:1st_der_bF} and \eqref{eq:grad_G_N_surface}, we can bound the second term by
\begin{align*}
    |D_N|^{-1}\int_{\partial D_N}\Big(|t_N-t'_N|+|h_N-h'_N|\Big)|h'-h'_N|\leq C|t_N-t'_N|+C|h_N-h'_N|,
\end{align*}
for all $h'\in h'_N+D_N$.  Hence, we conclude that \eqref{eq:inter_compare_g_N} is bounded by the right hand of the above display with a larger constant. This along with \eqref{eq:1st_der_bF} implies that
\begin{align*}
    \|g_N &\|_{L^\infty(\hN'+D_N)} \leq C|\tN-\tN'|+C|\hN-\hN'|\\
    &+ \sup_{h'\in \hN'+D_N} \Big|\bFN(\tN,h') - \bFN(\tN,\hN)-(h'-\hN)\cdot \nabla G_N(\tN,\hN)\Big|.
\end{align*}
By \eqref{eq:taylor_bF} and the definition of $D_N$ in \eqref{eq:D_N}, the supremum above can be bounded by
\begin{align*}
    C\big(\delta_N &+ |\hN-\hN'|\big)^2\\
    &+\sup_{h'\in \hN'+D_N}\big|(h'-h_N)\cdot \nabla\bF_N(t_N,h_N)-  (h'-\hN)\cdot \nabla G_N(\tN,\hN)\big|.
\end{align*}
We claim that
\begin{align}\label{eq:grad_bF_N-G_N}
    \sup_{h'\in \hN'+D_N}\big|(h'-h_N)\cdot \nabla\bF_N(t_N,h_N)-  (h'-\hN)\cdot \nabla G_N(\tN,\hN)\big|\leq C\delta^2_N.
\end{align}
This along with \eqref{eq:h-h_N_est} and \eqref{eq:h-h_N'_est} implies that $ \|g_N \|_{L^\infty(\hN'+D_N)} \leq C\delta^2_N$. Plug this into \eqref{eq:int_|grad_g_N|^2}, and we obtain \eqref{eq:bF_N-G_N_grad}.

\medskip

To complete the proof, we verify the claim \eqref{eq:grad_bF_N-G_N}. Using integration by parts, we can see
\begin{align*}
    (h'-h_N)\cdot \nabla\bF_N(t_N,h_N) = |D_N|^{-1}\int_{\partial D_N}\Big(h''\cdot \nabla \bFN(\tN, \hN)\Big)(h'-\hN)\cdot \mathbf{n}\cS(\d h'').
\end{align*}
Using the formula \eqref{eq:grad_G_N_surface} and $\int_{\partial D_N}c\cdot \nn =0 $ for any constant vector $c$, we can also get
\begin{align*}
    (h'-\hN)\cdot &\nabla G_N(\tN,\hN) = \\
    &|D_N|^{-1}\int_{\partial D_N}\Big( \bFN(\tN, \hN+h'')- \bFN(\tN, \hN)\Big)(h'-\hN)\cdot \mathbf{n}  \cS(\d h'').
\end{align*}
Taking the difference of the above two equations and using \eqref{eq:taylor_bF}, we can see the left hand side of \eqref{eq:grad_bF_N-G_N} is bounded by 
\begin{align*}
    C \sup_{h'\in \hN'+D_N}\delta_N|h'-h_N|\leq C\delta_N\big( \delta_N+|\hN-\hN'|\big).
\end{align*}
Now, \eqref{eq:grad_bF_N-G_N} follows from \eqref{eq:h-h_N_est} and \eqref{eq:h-h_N'_est}.

\end{proof}

\subsection{The limit is a supersolution when \texorpdfstring{$\H$}{H} is convex}\label{section:supsol}
Under the additional assumption that $\H$ is convex, we show that any subsequential limit of $\bF_N$ is a supersolution. For simplicity of notation, we again assume the entire sequence $(\bF_N)_{N\in\N}$ converges locally uniformly to $f$. Suppose $f-\phi$ achieves a local minimum at $(t,h)\in(0,\infty)\times \S^K_+$. Recall $M$ from \eqref{eq:M=(tv|h|)+1}. Let us redefine
\begin{gather}
    \delta_N = \max\{N^{-\frac{1}{6}},\cK_{M,N}^{\frac{2}{5}}\}, \label{eq:delta_N_2}\\
    D_N = \delta_N I + \S^K_{+,\delta_N},\nonumber\\
    G_N(t',h') = |D_N|^{-1}\int_{h'+D_N}\bFN(t',h'')\d h'',\quad \forall (t',h')\in\R_+\times \S^K_+.\label{eq:G_N_supsol}
\end{gather}
Note that in the definition of $G_N$, the integration is over a region away from $h'$ to avoid the singularity present in the right hand side of the estimate in Proposition~\ref{Prop:approx_HJ}. It is clear that $G_N$ converges locally uniformly to $f$. Then, there is a sequence $(t_N,h_N)\in(0,\infty)\times \S^K_+$ such that $\lim_{N\to\infty}(t_N,h_N)=(t,h)$ and $G_N-\phi$ has a local minimum at $(t_N,h_N)$. Since $\H$ is convex, we integrate both sides of the inequality in Proposition~\ref{Prop:approx_HJ} and use Jensen's inequality to see
\begin{align}
    \Big(\partial_tG_N-\H\big(\nabla G_N\big)\Big)(\tN,\hN) \geq \fint_{h_N+D_N} \Big(\partial_t \bF_N - \H(\nabla\bF_N)\Big)(t_N,h')\d h' \nonumber\\ \geq -C\Bigg(\fint_{\hN+D_N}\frac{|h'^{-1}|}{N^{\frac{1}{4}}}\big(\Delta\bFN +|h'^{-1}|\big)^\frac{1}{4} \d h'  + \fint_{\hN+D_N}\E\Big| \nabla F_N - \nabla \bF_N\Big|^2   \Bigg)^\frac{1}{2}\label{eq:supsol_G_N_lower}
\end{align}
where $\fint_{\hN+D_N} = |D_N|^{-1}\int_{\hN+D_N}$ and the time variable is evaluated at $t_N$ in \eqref{eq:supsol_G_N_lower}.

\smallskip

Let us estimate the integrals in \eqref{eq:supsol_G_N_lower}. 
The definition of $D_N$ implies that
\begin{align}\label{eq:h_inverse_norm}
    |h'^{-1}|\leq C\delta_N^{-1},\quad \forall h'\in h_N+ D_N.
\end{align}
Integrate by parts and use \eqref{eq:1st_der_bF} to see
\begin{align*}
    \Delta G_N(t_N,h_N) =\fint_{\hN+D_N}\Delta\bFN(\tN,h')\d h' \leq |D_N|^{-1}\int_{\partial(\hN+D_N)}\big|\nabla\bFN(\tN,\cdot)\big|\leq C\delta_N^{-1}.
\end{align*}
The above two displays together with Jensen's inequality and \eqref{eq:delta_N_2} implies that
\begin{align}
    \fint_{\hN+D_N}&  N^{-\frac{1}{4}}|h'^{-1}|\big(\Delta\bFN(t_N, h') +|h'^{-1}|\big)^\frac{1}{4} \d h'\nonumber\\
    &\leq  CN^{-\frac{1}{4}}\delta_N^{-1}\bigg(\Delta G_N(t_N,h_N) + \delta_N^{-1}\bigg)^\frac{1}{4}\leq C\delta_N^\frac{1}{4}.\label{eq:supsol_est_1}
\end{align}
To estimate the last integral in \eqref{eq:supsol_G_N_lower}, we use the same argument in the proof of \eqref{eq:grad_F_N_concent}. The only difference is that since now it is possible that $h\in \S^K_+\setminus\S^K_{++}$, the singularity in the estimate \eqref{eq:1st_der_F} takes effect. Due to \eqref{eq:h_inverse_norm}, compared with \eqref{eq:to_compare}, there is an additional $\delta_N^{-\frac{1}{2}}$. For $N$ large, we have
\begin{align}
    \E\int_{h_N + D_N}\Big| \nabla F_N - \nabla \bF_N\Big|^2 & \leq C\delta_N^{\gamma-\frac{3}{2}}\Big( \E \|F_N-\bFN\|_{L^\infty(\hN+D_N)}^2\Big)^\frac{1}{2}\nonumber\\
    &\leq C\delta_N^{\gamma-\frac{3}{2}}\cK_{M,N}\leq C\delta_N^{\gamma+1},\label{eq:supsol_est_2}
\end{align}
where we used \eqref{eq:def_K} and \eqref{eq:M=(tv|h|)+1} in the penultimate inequality, and \eqref{eq:delta_N_2} in the last inequality.
Inserting \eqref{eq:supsol_est_1} and \eqref{eq:supsol_est_2} into \eqref{eq:supsol_G_N_lower}, we obtain
\begin{align}\label{eq:sup_G_N_lower}
     \Big(\partial_tG_N-\H\big(\nabla G_N\big)\Big)(\tN,\hN)\geq -C\delta^\frac{1}{8}_N.
\end{align}
First suppose that there are infinitely many $(t_N,h_N)$ with $h_N\in \S^K_{++}$. Since first derivatives of $G_N$ coincides with $\phi$ at those $(t_N,h_N)$, by taking $N\to\infty$ and using the smoothness of $\phi$, we obtain from \eqref{eq:sup_G_N_lower} that
\begin{align}\label{eq:check_supersol}
    \Big(\partial_t\phi-\H\big(\nabla \phi\big)\Big)(t,h)\geq 0.
\end{align}

\medskip

If there are infinitely many $(t_N,h_N)$ with $h_N\in \S^K_+\setminus\S^K_{++}$, then we must have $h \in \S^K_+\setminus\S^K_{++}$. Due to $t\in(0,\infty)$ and $\lim_{N\to\infty}t_N =t$, for large $N$, we have $t_N\in(0,\infty)$. Since $G_N-\phi $ has a local minimum at $(t_N,h_N)$, we have
\begin{gather}
     \big(\partial_tG_N - \partial_t\phi\big)(\tN,\hN)=0,\label{eq:sup_boundary_dt}\\
     \big(\nabla G_N - \nabla\phi\big)(\tN,\hN)\in\S^K_+.\label{eq:sup_boundary_grad}
\end{gather}
We also used Lemma~\ref{lemma:psd} in deriving \eqref{eq:sup_boundary_grad}.
By the definition of $G_N$ in \eqref{eq:G_N_supsol}, the nondecreasingness of $\bF_N$ in \eqref{eq:1st_der_bF_lower}, and the uniform Lipschitzness of $\bF_N$ in \eqref{eq:1st_der_bF}, we have, for all $N\in\N$,
\begin{align}\label{eq:grad_G_N_psd}
    \nabla G_N\in \S^K_+,\qquad |\nabla G_N|\leq \|\bF_N\|_\mathrm{Lip}\leq C,
\end{align}
where the last constant $C$ is absolute.
In addition, due to \eqref{eq:2nd_der_bF}, $ G_N$ is convex in the second variable, which yields
\begin{align*}
	y\cdot \nabla G_N(t_N,h_N)\leq G_N(t_N,h_N+y)-G_N(t_N,h_N),\quad \forall y \in \S^K_+.
\end{align*}
Let $a$ be any subsequential limit of $\big(\nabla G_N(t_N,h_N)\big)_{N\in\N}$. Replace $y$ by $y_N= \nabla G_N(t_N,h_N)$ in the above display and use $\lim_{N\to\infty}(t_N,h_N)=(t,h)$ and the local uniform convergence of $G_N$ towards $f$ to see
\begin{align*}
	|a|^2 \leq f(t,h+a) - f(t,h).
\end{align*}
The Lipschitzness of $f$ implies 
\begin{align}\label{eq:|a|<|f|}
	|a|\leq \|f\|_\mathrm{Lip}.
\end{align}
We extract a subsequence from $\big(\nabla G_N(t_N,h_N)\big)_{N\in\N}$, along which 
\begin{align*}
    \liminf_{N\to\infty}\H\big(\nabla G_N(t_N,h_N)\big)
\end{align*}
is achieved. Denote by $a$ the further subsequential limit of this minimizing sequence. By this and the continuity of $\H$, we obtain
\begin{align}\label{eq:H(a)}
		\liminf_{N\to\infty}\H\big(\nabla G_N(t_N,h_N)\big) = \H(a).
\end{align}
Due to \eqref{eq:sup_boundary_grad}, \eqref{eq:grad_G_N_psd} and $\lim_{N\to\infty}(t_N,h_N)=(t,h)$, we also have
\begin{align}\label{eq:a_psd}
	a-\nabla\phi(t,h) \in \S^K_+,\qquad a\in\S^K_+.
\end{align}
Recall the quantity $\inf \H(q)$ for the boundary condition in \eqref{item:vs_2} of Definition~\ref{def:vs}. By \eqref{eq:|a|<|f|} and \eqref{eq:a_psd}, we have
\begin{align*}
	\H(a)\geq \inf \H(q).
\end{align*}
Use this, \eqref{eq:sup_boundary_dt}, \eqref{eq:H(a)} and \eqref{eq:sup_G_N_lower} to get
\begin{align*}
	\Big(\partial_t\phi-\inf \H(q)\Big)(t,h)\geq \lim_{N\to\infty} \partial_t G_N(t_N,h_N) - \H(a)\\
	= \lim_{N\to\infty} \partial_t G_N(t_N,h_N) - \liminf_{N\to\infty}\H\big(\nabla G_N(t_N,h_N)\big) \\
	\geq \limsup_{N\to\infty}\Big(\partial_t G_N-\H\big(\nabla G_N\big)\Big)(t_N,h_N)\geq 0.
\end{align*}
This along with \eqref{eq:check_supersol} completes our verification that $f$ is a supersolution.

\appendix

\section{Nonsymmetric matrix inference}\label{section:apdx_nonsym}

The goal of this appendix is to demonstrate a case where $\H$ is not convex, yet the assumptions on $\cA_\H$ in Theorem~\ref{thm:general_cvg_bF} are satisfied. Let $X_1$ and $X_2$ be two random vectors in $\R^N$. The task is to infer the nonsymmetric matrix $X_1X_2^\intercal$ from the noisy observation 
\begin{align}\label{eq:nonsym_infer}
    Y=\sqrt{\frac{2t}{N}}X_1X_2^\intercal + W\in \R^{N\times N}.
\end{align}
Let $X=\mathsf{diag}(X_1,X_2)\in \R^{2N\times 2}$. We can compute 
\begin{align*}
    X\otimes X = \mathsf{diag}\big(X_1\otimes X_1,\ X_1\otimes X_2,\ X_2\otimes X_1,\ X_2\otimes X_2 \big)\in \R^{4N^2\times 4}.
\end{align*}
Let $A = (0,1,0,0)\in\R^4$. Then note that the non-zero entries of $(X\otimes X) A$ are those from $X_1\otimes X_2$, which are exactly the entries of $X_1X_2^\intercal$. As observed in \cite{reeves2020information}, the model \eqref{eq:nonsym_infer} is equivalent to the model
\begin{align*}
    Y = \sqrt{\frac{2t}{N}}X^{\otimes 2}A+W\in \R^{4N^2\times 1},
\end{align*}
which is a special case of \eqref{eq:observ}.

\smallskip

By the formula of $\H$ in \eqref{eq:H}, we can compute $\H(q)=q_{11}q_{22}$ and thus $\cD\H(q) = \mathsf{diag}(q_{22},q_{11})$ for all $q\in\S^2_+$. Recall the set $\cA$ defined above \eqref{eq:A_H}. Then for smooth $\phi\in\cA$, using the basis \eqref{eq:orthonormal_basis}, we can obtain
\begin{align*}
    \nabla\cdot \cD\H(\nabla \phi) = 2 e^{11}\cdot \nabla(e^{22}\cdot \nabla \phi).
\end{align*}
Hence, formally, $\cA_\H$ consists of those $\phi\in\cA$ whose second order derivative as on the left of the above is nonnegative. By standard arguments involving test functions, we can see $\cA_\H$ is indeed convex. Then, we show $\bF_N(t,\cdot)\in\cA_\H$ for all $t$ and all $N$. In the proof of \eqref{eq:2nd_der_bF}, we used \cite[(3.27)]{mourrat2019hamilton} to compute $a\cdot\nabla(a\cdot \nabla \bF_N)$. A slight modification of \cite[(3.27)]{mourrat2019hamilton} gives
\begin{align*}
    &Na\cdot \nabla(b\cdot \nabla\bF_N)\\
    &= \E\big\la\big(a\cdot x^\intercal  x'\big)(b\cdot x^\intercal  x'\big)\big\ra-2\E\big\la\big(a\cdot x^\intercal  x'\big)\big(b\cdot x^\intercal x''\big)\big\ra+\E\big\la a\cdot x^\intercal x'\big\ra\la b\cdot x^\intercal x'\big\ra,
\end{align*}
for $a,b\in\S^2$. By the definition of $X$ in this model, under the Gibbs measure $\la\cdot \ra$, we can write $x = \mathsf{diag}(x_1,x_2)$ with $x_1,x_2\in\R^N$. Replace $a$ and $b$ by $e^{11}$ and $e^{22}$ respectively in the above display to see $Ne^{11}\cdot\nabla (e^{22}\cdot \bF_N)$ is given by
\begin{align*}
    \E\big\la\big(x_1\cdot x_1'\big)(x_2\cdot x_2'\big)\big\ra-2\E\big\la\big(x_1\cdot x_1'\big)\big(x_2\cdot x_2''\big)\big\ra+\E\big\la x_1\cdot x_1'\big\ra\la x_2\cdot x_2'\big\ra\\
    = \E\sum_{m,n=1}^N\bigg(\la x_{1,m}x_{2,n}\ra^2 - 2\la x_{1,m}x_{2,n}\ra \la x_{1,m}\ra \la x_{2,n} \ra+ \la x_{1,m}\ra \la x_{2,n}\ra^2\bigg)\geq 0.
\end{align*}
This shows that the assumptions on $\cA_\H$ in Theorem~\ref{thm:general_cvg_bF} are satisfied despite the fact that $\H$ is not convex in this case.

\section{Fenchel--Moreau identity}\label{section:biconjugation}

The goal is to prove the following version of the Fenchel--Moreau identity on $\S^K_+$. More general versions on self-dual cones in possibly infinite dimensional Hilbert spaces can be seen in \cite{chen2020fenchel}. Here, for completeness, we prove this using arguments more specific to matrices. Recall the Fenchel transformation over $\S^K_+$ defined in \eqref{eq:def_u*}, and the sense of nondecreasingness in \eqref{eq:nondecreasing}.

\begin{Prop}[Fenchel--Moreau identity]\label{prop:biconjugate}
Let $u:\S^K_+\to (-\infty,+\infty]$ be a function not identically equal to $+\infty$. Then, $u^{**}=u$ if and only if $u$ is convex, l.s.c.\ (lower semi-continuous), and nondecreasing.
\end{Prop}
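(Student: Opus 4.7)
For the necessity direction, I would observe that $u^{**}(x)=\sup_{z\in\S^K_+}(x\cdot z-u^*(z))$ is a supremum of affine continuous functions; each summand $x\mapsto x\cdot z-u^*(z)$ is nondecreasing on $\S^K_+$ by Lemma~\ref{lemma:psd} since $z\in\S^K_+$, so $u^{**}$ is automatically convex, l.s.c., and nondecreasing.  Hence $u=u^{**}$ forces the three listed properties.

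The sufficiency direction reduces to the nontrivial inequality $u\leq u^{**}$ on $\S^K_+$ (the reverse is immediate from $u^*(z)\geq x\cdot z-u(x)$).  My plan is to trade the $\S^K_+$-constrained Fenchel transform for the classical one on $\S^K$ via the downward extension
\begin{equation*}
\tilde u(x)\,:=\,\inf\bigl\{u(y)\,:\,y\in\S^K_+,\ y-x\in\S^K_+\bigr\},\qquad x\in\S^K,
\end{equation*}
with the convention $\inf\emptyset=+\infty$.  Monotonicity of $u$ forces $\tilde u=u$ on $\S^K_+$, and $\tilde u$ is visibly convex and nondecreasing.  Writing $\tilde u^\sharp(z):=\sup_{y\in\S^K}(y\cdot z-\tilde u(y))$ for the classical Fenchel transform on $\S^K$, I would check two cases.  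If $z\in\S^K_+$, interchanging the inner infimum with the outer supremum and using Lemma~\ref{lemma:psd} to evaluate $\sup_{y\leq y'}y\cdot z=y'\cdot z$ yields $\tilde u^\sharp(z)=u^*(z)$.  If $z\notin\S^K_+$, Lemma~\ref{lemma:psd} supplies $s\in\S^K_+$ with $s\cdot z<0$; fixing $y_0$ with $u(y_0)<\infty$ and letting $y=y_0-ts$ with $t\to\infty$ drives $y\cdot z\to+\infty$ while $\tilde u(y)\leq u(y_0)$ stays bounded by nondecreasingness, so $\tilde u^\sharp(z)=+\infty$.  The classical Fenchel--Moreau theorem, applied to the l.s.c.\ hull $\bar{\tilde u}$ (which is convex, l.s.c., and proper because $\tilde u$ is bounded below by $u(0)$), then gives
\begin{equation*}
\bar{\tilde u}(x)\,=\,\sup_{z\in\S^K}\bigl(x\cdot z-\tilde u^\sharp(z)\bigr)\,=\,\sup_{z\in\S^K_+}\bigl(x\cdot z-u^*(z)\bigr)\,=\,u^{**}(x),\qquad x\in\S^K.
\end{equation*}

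It remains to identify $\bar{\tilde u}$ with $u$ on $\S^K_+$.  Since $\bar{\tilde u}\leq\tilde u=u$ there, the task amounts to showing that $\tilde u$ is already lower semi-continuous at every point of $\S^K_+$, so that the closure operation does not strictly decrease the value.  This is where I expect the main technical obstacle.  My plan is to argue as follows: given $y_n\to x_0\in\S^K_+$ and near-minimizers $y'_n\in\S^K_+$ with $y'_n\geq y_n$ and $u(y'_n)\leq\tilde u(y_n)+1/n$, let $\lambda_n$ be the largest positive eigenvalue of $x_0-y_n$ (which tends to $0$) and set $y''_n:=y'_n+\lambda_n I\in\S^K_+$; by construction $y''_n\geq x_0$, so nondecreasingness forces $u(y''_n)\geq u(x_0)$, and combining with the l.s.c.\ of $u$ along a bounded subsequence of $(y'_n)$---with the unbounded case handled by a recession-cone argument exploiting that $u$ is nondecreasing and convex---yields $\liminf_n\tilde u(y_n)\geq u(x_0)$.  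In the Lipschitz setting in which the proposition is actually invoked (via Proposition~\ref{Prop:hopf_viscosity}), this last step collapses to the trivial estimate $|u(y''_n)-u(y'_n)|\lesssim\lambda_n\to 0$.
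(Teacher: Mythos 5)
Your reduction is clean and genuinely different from the paper's. The paper extends $u$ to $\S^K$ by $+\infty$, applies the usual Fenchel--Moreau there, and then shows (via supporting hyperplanes, outer normal cones, and a nontrivial case split on whether $\itr(\mathsf{dom}\,u)$ is empty) that the supremum in the resulting biconjugate can be taken over $\S^K_+$. You instead extend $u$ \emph{downward} by $\tilde u = \hat u\,\square\,\iota_{-\S^K_+}$ (an infimal convolution), compute $\tilde u^\sharp = u^*$ on $\S^K_+$ and $+\infty$ off it (this step and the ensuing bookkeeping are all correct), and reduce the whole statement to showing that $\tilde u$ is lower semi-continuous at every point of $\S^K_+$.

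The gap is precisely there, and it is not a small one. Infimal convolution does not preserve lower semi-continuity in general, and the points where it can fail are exactly the hard points of the proposition: $x_0$ on the boundary $\partial\S^K_+$, or on the boundary of $\mathsf{dom}\,u$. Your sketch handles the case of a bounded subsequence of near-minimizers $y'_n$ correctly, but the unbounded case is waved away with ``a recession-cone argument,'' and the natural recession-cone argument does not close. If $v = \lim y'_n/|y'_n|$ is the escape direction, then $v$ lies in the recession cone of the sublevel set $\{u\leq L+1\}$, which only tells you that $w+tv$ stays in that set for $w$ already inside it; to compare with $x_0$ you would need to move from $y'_n$ \emph{against} $v$, which does not stay in the sublevel set. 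The auxiliary device $y''_n = y'_n + \lambda_n I$ does not help either: nondecreasingness gives $u(y''_n)\geq u(y'_n)$, the wrong inequality, and the increment $u(y''_n)-u(y'_n)$ cannot be controlled without Lipschitz (or at least local boundedness) hypotheses on $u$, which the proposition does not assume. In effect, proving $\tilde u$ l.s.c.\ on $\S^K_+$ is equivalent to the hard direction of the proposition; the reduction is a genuine reformulation, not a simplification, and the hard part is left unproved. Your closing observation that the Lipschitz setting makes the last step trivial is valid for the way Proposition~\ref{Prop:hopf_viscosity} uses the result, but it does not prove the proposition as stated.
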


It is easy to see that $v^*$ is convex and l.s.c.\ for any function $v$. In addition by Lemma~\ref{lemma:psd}, we can see that $v^*$ is also nondecreasing. Hence, to prove Proposition~\ref{prop:biconjugate}, it suffices to show the following.

\begin{Lemma}\label{prop:biconjugate_one_side}
If $u:\S^K_+\to(-\infty,+\infty]$ is convex, l.s.c., nondecreasing and not identically $+\infty$, then $u^{**}=u$.
\end{Lemma}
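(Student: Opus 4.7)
The inequality $u^{**}\le u$ is immediate from the definition \eqref{eq:def_u*}: for every $y,x\in\S^K_+$, $u^*(y)\ge y\cdot x-u(x)$, and taking the supremum over $y\in\S^K_+$ of $y\cdot x-u^*(y)\le u(x)$ yields $u^{**}(x)\le u(x)$. The substantive part of the proof is the reverse inequality, which I plan to establish by producing, for each $x_0\in\S^K_+$, affine minorants of $u$ on $\S^K_+$ whose gradient lies in the self-dual cone $\S^K_+$ and whose value at $x_0$ is arbitrarily close to $u(x_0)$.

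The first step is to treat the ``generic'' case $x_0\in\S^K_{++}\cap\mathrm{dom}(u)$, where $\mathrm{dom}(u):=\{u<+\infty\}$. At such a point, a standard result in convex analysis guarantees the existence of $z\in\S^K$ satisfying the subgradient inequality $u(y)\ge u(x_0)+z\cdot(y-x_0)$ for every $y\in\S^K_+$. The key geometric observation is that nondecreasingness of $u$ forces $z\in\S^K_+$: for each $w\in\S^K_+$, the inclusion $x_0\in\S^K_{++}$ ensures $x_0-tw\in\S^K_+$ for all sufficiently small $t>0$, and nondecreasingness \eqref{eq:nondecreasing} then gives the one-sided directional derivative bound $u'(x_0;-w)\le 0$; combined with $u'(x_0;-w)\ge -z\cdot w$ (a direct consequence of the subgradient inequality applied to $y=x_0-tw$), this yields $z\cdot w\ge 0$ for every $w\in\S^K_+$, and Lemma~\ref{lemma:psd} implies $z\in\S^K_+$. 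The affine minorant $\ell(y)=u(x_0)+z\cdot(y-x_0)$ then satisfies $u^*(z)=z\cdot x_0-u(x_0)$, whence $u^{**}(x_0)\ge z\cdot x_0-u^*(z)=u(x_0)$.

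The second step extends the conclusion to an arbitrary $x_0\in\S^K_+$ by approximation from the interior. Setting $x_\epsilon:=x_0+\epsilon I\in\S^K_{++}$, the scalar function $g(t):=u(x_0+tI)$ is convex on $[0,+\infty)$ with values in $(-\infty,+\infty]$. When both $u(x_0)$ and some $g(\epsilon_0)$ are finite, the monotonicity of $g'$ and the identity $g(\epsilon_0)-g(0)=\int_0^{\epsilon_0}g'(t)\,\mathrm{d}t$ force $\epsilon\,g'(\epsilon)\to 0$ as $\epsilon\to 0^+$. Since the subgradient $z_\epsilon\in\S^K_+$ supplied by Step~1 satisfies $\tr(z_\epsilon)=z_\epsilon\cdot I\le g'(\epsilon)$, the corresponding affine minorant evaluated at $x_0$ equals $u(x_\epsilon)-\epsilon\,\tr(z_\epsilon)$, which converges to $u(x_0)$ by the continuity of $g$ on $[0,\epsilon_0]$. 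Hence $u^{**}(x_0)\ge u(x_0)$. The remaining degenerate cases---either $u(x_0)=+\infty$, or $u(x_\epsilon)=+\infty$ for every $\epsilon>0$---will be handled using the lower semicontinuity of $u$ and $u^{**}$ together with their common nondecreasingness along the segment $[x_0,x_0+I]$.

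The principal obstacle, and the only place where the hypotheses on $u$ do essential work, is the subgradient analysis of Step~1: proving that nondecreasingness on $\S^K_+$ forces every subgradient of $u$ at an interior point into the self-dual cone $\S^K_+$. Once this geometric fact is secured via Lemma~\ref{lemma:psd}, the remainder of the argument is a familiar approximation procedure combined with one-dimensional convex analysis.
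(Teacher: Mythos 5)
Your key geometric insight in Step~1 --- that nondecreasingness of $u$ on $\S^K_+$ forces every subgradient at an interior point into the self-dual cone $\S^K_+$, via Lemma~\ref{lemma:psd} --- is the same observation the paper makes (there, at points of differentiability using the gradient rather than the subdifferential). And your Step~2 replacement for the paper's ``density of differentiability points plus continuity along segments into $\itr\Omega$'' (Lemmas~\ref{lemma:cty} and~\ref{lemma:diag_D}-type arguments) by a one-dimensional analysis of $g(t)=u(x_0+tI)$, together with the clean estimate $\epsilon\, g'(\epsilon)\to 0$, is a genuine and attractive simplification where it applies. So far so good.

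However, the proposal has a real gap, and it is exactly at the ``degenerate cases'' you wave off at the end. First, a small point: the claim in Step~1 that a subgradient exists at every $x_0\in\S^K_{++}\cap\mathrm{dom}(u)$ is not a standard fact and is in fact false --- for $K=1$, $u(x)=-\sqrt{1-x}$ on $[0,1]$ and $+\infty$ beyond, the point $x_0=1$ lies in $\S^1_{++}\cap\mathrm{dom}(u)$ but has no finite subgradient; the standard result requires $x_0\in\itr(\mathrm{dom}\,u)$, not merely $x_0\in\S^K_{++}\cap\mathrm{dom}(u)$. As applied inside Step~2 at $x_\epsilon$ with $u(x_{\epsilon_0})<\infty$, this is salvageable (one then does have $x_\epsilon\in\itr(\mathrm{dom}\,u)$), but it should be stated that way. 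The serious problem is Step~2. Your approximation $x_\epsilon=x_0+\epsilon I$ requires $u(x_{\epsilon_0})<\infty$ for some $\epsilon_0>0$. This covers only $x_0$ lying below an interior point of $\mathrm{dom}\,u$ in the direction $I$. The remaining cases are not degenerate corner cases but the bulk of the content of the lemma: (i) if $\mathrm{dom}\,u$ is contained in a proper face of $\S^K_+$ (the case $\itr(\mathrm{dom}\,u)=\emptyset$, which occupies all of Section~\ref{section:empty_int} in the paper), then $u(x_0+\epsilon I)=+\infty$ for \emph{every} $x_0\in\mathrm{dom}\,u$ and every $\epsilon>0$, so Step~2 produces nothing at any point of $\mathrm{dom}\,u$; (ii) even when $\itr(\mathrm{dom}\,u)\neq\emptyset$, boundary points of $\mathrm{dom}\,u$ in the ``increasing'' direction and points of $\S^K_+\setminus\cl(\mathrm{dom}\,u)$ cannot be reached this way --- the paper needs the outer normal cone construction (Lemma~\ref{lemma:outer_normal_x}) to manufacture admissible dual slopes $z\in\S^K_+$ at such points. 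Your proposal ``l.s.c. of $u$ and $u^{**}$ plus nondecreasingness along $[x_0,x_0+I]$'' cannot close either of these: lower semicontinuity gives $u^{**}(x_0)\leq\liminf_{y\to x_0}u^{**}(y)$, the wrong direction for the inequality $u^{**}(x_0)\geq u(x_0)$ you need; and nondecreasingness only bounds $u^{**}(x_0)$ from below by $u^{**}$ at smaller PSD matrices, which does not force $u^{**}(x_0)=+\infty$ when $u(x_0)=+\infty$.

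A concrete instance where the proposal breaks: take $K=2$, $u(\diag(a,0))=a$ for $a\geq0$ and $u=+\infty$ everywhere else on $\S^2_+$. This $u$ is convex, l.s.c.\ and nondecreasing, with $\mathrm{dom}\,u$ a proper face of $\S^2_+$. At $x_0=\diag(1,0)$ one has $u(x_0)=1<\infty$ but $u(x_0+\epsilon I)=+\infty$ for every $\epsilon>0$, so your Step~2 yields nothing, while $\itr(\mathrm{dom}\,u)=\emptyset$ means there is no interior segment to approach $x_0$ from; and at $x_0=\diag(1,1)$ one needs to conclude $u^{**}(x_0)=+\infty$, which requires dual slopes with arbitrarily large $(2,2)$-entry --- exactly what the paper builds with the $\rho z$ deformation in Section~\ref{section:empty_int}. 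You would need an argument of that type (reduce to the face and then push off the face) to complete the proof; the one-dimensional limit along $x_0+\epsilon I$ alone does not do it.
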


The rest of this section is devoted to proving Lemma~\ref{prop:biconjugate_one_side}. Henceforth, we assume that $u$ satisfies the condition imposed in this lemma.

\subsection{Preliminaries}
We introduce some notation and classical results.  We extend $u$ to $\S^K \cong \R^{K(K+1)/2} $ by setting the value outside $\S^K_+$ to be $\infty$. Denote by $\circledast$ the usual conjugate with the $\sup$ over $\S^K$. The extension of $u$ gives $u^\circledast = u^*$. By the regular Fenchel-Moreau theorem, we have
\begin{align*}
u(x)= \sup_{y\in\S^K}\{y\cdot x - u^*(y)\},\quad \forall x\in\S^K.
\end{align*}
We want to show, whenever $x\in\S^K_+$, the $\sup$ above can be taken over $\S^K_+$.

\smallskip

Denote by $\Omega = \mathsf{dom}\, u=\{x\in\S^K:\ u(x)<+\infty\}$ the effective domain of $u$.  For any $A\subset \S^K$, $\itr A$, $\cl A$, $\bd A$ and $\conv$ stand for the interior, closure, boundary, and convex hull of $A$, respectively. For each $y\in\S^K$, we define the subdifferential of $u$ at $x$ by
\begin{align}\label{eq:def_subdiff}
    \partial u(y) = \{z\in\S^K:\ u(y') \geq u(y)+z\cdot(y'-y),\ \forall y'\in \S^K\}.
\end{align}
The outer normal cone to $\Omega$ at $y\in\S^K$ is given by
\begin{align}\label{eq:def_normal_cone}
    \nn(y)=\{z\in\S^K:\ z\cdot(y'-y)\leq 0,\ \forall y'\in \Omega\}.
\end{align}
Define
\begin{align*}
    D = \{x\in\Omega:\ u \text{ is differentiable at }x\}. 
\end{align*}
For $a\in\S^K$ and $\nu\in\R$, we define the affine function $L_{a,\nu}$ by $L_{a,\nu}(x)=a\cdot x + \nu$.

\smallskip

We recall some useful lemmas, all of which are classical.

\begin{Lemma}\label{lemma:interior}
For a convex set $A$, if $y\in \cl A$ and $y'\in\itr A$, then $\lambda y + (1-\lambda)y'\in\itr A$ for all $\lambda \in [0,1)$.
\end{Lemma}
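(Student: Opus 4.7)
The plan is to produce an explicit open ball around the point $z:=\lambda y+(1-\lambda)y'$ that lies entirely in $A$. This is the classical accessibility (or line-segment) principle for convex sets; the only subtlety is that $y$ is merely in $\cl A$ rather than in $A$, so a direct convex-combination argument would only yield $z\in\cl A$.

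First, since $y'\in\itr A$, I will fix $r>0$ such that the open ball $B(y',r)\subset A$. Since $y\in\cl A$, I will choose a sequence $y_n\in A$ with $y_n\to y$ and set $z_n:=\lambda y_n+(1-\lambda)y'$, so that $z_n\to z$. The key intermediate fact to establish is
\begin{align*}
B\bigl(z_n,\,(1-\lambda)r\bigr)\subset A,
\end{align*}
and here the hypothesis $\lambda<1$ is exactly what makes this ball nontrivial. Given $w\in B(z_n,(1-\lambda)r)$, one writes $w=\lambda y_n+(1-\lambda)w'$ with $w':=y'+(w-z_n)/(1-\lambda)$; then $|w'-y'|<r$, so $w'\in B(y',r)\subset A$, and $w$ is a convex combination of two points of $A$, hence in $A$ by convexity.

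To conclude, I will pick $n$ large enough that $|z_n-z|<(1-\lambda)r/2$. The triangle inequality then yields
\begin{align*}
B\bigl(z,\,(1-\lambda)r/2\bigr)\subset B\bigl(z_n,\,(1-\lambda)r\bigr)\subset A,
\end{align*}
which gives $z\in\itr A$.

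The main obstacle is conceptual rather than computational: one must not conflate $A$ with $\cl A$. A naive argument writing $z$ itself as a convex combination involving $y\in\cl A$ would only place $z$ in $\cl A$. Going through the intermediate point $z_n\in\itr A$ via a convex-combination argument that uses only honest points of $A$, and then passing a shrunken ball from $z_n$ back to $z$, is precisely what bridges this gap and is the crux of the proof.
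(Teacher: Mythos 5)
Your proof is correct. The paper does not actually prove this lemma: it simply cites \cite[Theorem 6.1]{rockafellar1970convex} (the line-segment / accessibility lemma for convex sets), so there is no paper proof to compare against step by step. Your argument is the standard elementary proof of that result, and it is carried out correctly: you fix $r>0$ with $B(y',r)\subset A$, approximate $y$ by $y_n\in A$, show $B(z_n,(1-\lambda)r)\subset A$ by writing each $w$ there as a convex combination $w=\lambda y_n+(1-\lambda)w'$ with $w'\in B(y',r)$, and then transfer a half-radius ball from $z_n$ to $z$ once $|z_n-z|<(1-\lambda)r/2$. You also correctly identify the genuine subtlety, namely that a direct convex combination using $y\in\cl A$ itself would only land $z$ in $\cl A$; going through the honest points $y_n\in A$ is exactly what closes that gap. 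One small remark: the approximation by a sequence is slightly more than needed, since a single $y''\in A$ with $\lambda|y''-y|<(1-\lambda)r/2$ suffices, but this is cosmetic. What your approach buys over the paper's treatment is self-containedness; what the citation buys is brevity and the slightly more general relative-interior statement in Rockafellar.
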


\begin{Lemma}\label{lemma:cty}
Let $x\in\S^K_+$ and $y\in\Omega$. For every $\alpha\in(0,1)$, set $x_\alpha=(1-\alpha)x+\alpha y$. Then $\lim_{\alpha\to 0} u(x_\alpha)=u(x)$.
\end{Lemma}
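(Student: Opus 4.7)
The plan is to separate the two inequalities $\limsup_{\alpha\to 0} u(x_\alpha)\leq u(x)$ and $\liminf_{\alpha\to 0}u(x_\alpha)\geq u(x)$, each of which uses one of the two hypotheses on $u$ (convexity and lower semi-continuity, respectively). The only mild subtlety is that $u(x)$ is \emph{a priori} allowed to equal $+\infty$, so I will organize the proof to handle the finite and infinite cases uniformly.

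For the upper bound, I would invoke convexity of $u$ (extended to $\S^K$ by $+\infty$ outside $\S^K_+$, which remains convex) applied to the combination $x_\alpha = (1-\alpha)x+\alpha y$, yielding
\begin{equation*}
    u(x_\alpha)\leq (1-\alpha)u(x)+\alpha u(y).
\end{equation*}
Since $y\in\Omega$ means $u(y)<+\infty$, letting $\alpha\to 0$ gives $\limsup_{\alpha\to 0}u(x_\alpha)\leq u(x)$, whether $u(x)$ is finite or $+\infty$ (in the latter case the inequality is trivial and we rely on the lower bound below to force the limit).

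For the lower bound, I would use that $x_\alpha\to x$ as $\alpha\to 0$ together with the assumed lower semi-continuity of $u$ at $x$, which gives
\begin{equation*}
    \liminf_{\alpha\to 0} u(x_\alpha)\geq u(x).
\end{equation*}
This is valid regardless of whether $u(x)$ is finite or $+\infty$: in the latter case it forces $\lim_{\alpha\to 0}u(x_\alpha)=+\infty=u(x)$, while in the former case it combines with the previous $\limsup$ estimate to give $\lim_{\alpha\to 0}u(x_\alpha)=u(x)$.

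There is no serious obstacle here: the result is an instance of the classical statement that a convex l.s.c.\ function is continuous along segments joining any point to a point in its effective domain. The nondecreasingness of $u$ and the positive semidefinite structure of $\S^K_+$ play no role in this particular lemma. The only point requiring a bit of care is to write the argument so that the case $u(x)=+\infty$ is covered automatically, which the $\limsup/\liminf$ split above does.
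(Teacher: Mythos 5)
Your proof is correct and matches the standard argument: convexity along the segment gives the $\limsup$ bound, and lower semi-continuity at $x$ gives the $\liminf$ bound, with the case $u(x)=+\infty$ absorbed automatically. The paper supplies no proof of its own and simply cites \cite[Proposition 9.14]{bauschke2011convex}, which encapsulates exactly this classical reasoning, so the approaches coincide.
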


\begin{Lemma}\label{lemma:diff_D}
The set $\itr \Omega\setminus D$ has Lebesgue measure zero.
\end{Lemma}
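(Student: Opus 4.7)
The plan is to reduce to the classical fact from convex analysis that a convex function is locally Lipschitz on the interior of its effective domain, and then invoke Rademacher's theorem. Via the isometric identification $\S^K\cong\R^{K(K+1)/2}$ from \eqref{eq:orthonormal_basis}, both the Lebesgue measure structure and Rademacher's theorem on $\itr\Omega$ are available verbatim.

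First I would note the preparatory point that $\Omega$ is convex (as the effective domain of a convex function), hence $\itr\Omega$ is open and convex in $\S^K$, and $u$ is finite on $\itr\Omega$ by definition of interior together with $\Omega=\mathsf{dom}\,u$. If $\itr\Omega$ is empty the lemma is vacuous, so I assume it is nonempty.

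Next I would establish local Lipschitzness of $u$ on $\itr\Omega$ in the usual two-step way. Given $x_0\in\itr\Omega$, pick a simplex $\Delta\subset\itr\Omega$ with $x_0\in\itr\Delta$; convexity bounds $u$ above on $\Delta$ by the maximum of its vertex values, and the symmetric inequality $u(x_0)\leq\tfrac12 u(x)+\tfrac12 u(2x_0-x)$ produces a matching lower bound on a small ball around $x_0$, so $u$ is locally bounded on $\itr\Omega$. The standard three-point convexity inequality then upgrades local boundedness to local Lipschitzness: if $|u|\leq M$ on $B_{2r}(x_0)$, then comparing $u$ at $x$, $y$, and an endpoint of the chord through $x,y$ on $\partial B_{2r}(x_0)$ gives $|u(x)-u(y)|\leq (2M/r)|x-y|$ for all $x,y\in B_r(x_0)$.

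With local Lipschitzness in hand, Rademacher's theorem on the open set $\itr\Omega\subset\R^{K(K+1)/2}$ yields Fr\'echet differentiability Lebesgue-a.e., so $\itr\Omega\setminus D$ has Lebesgue measure zero. The statement is entirely standard convex analysis, so I do not expect any real obstacle; a self-contained alternative is to argue via subdifferentials, using that $\partial u(x)$ is a nonempty compact convex set for every $x\in\itr\Omega$ and that $u$ is differentiable at $x$ iff $\partial u(x)$ is a singleton, then invoking monotonicity of $\partial u$ and a Fubini-type argument on directional derivatives.
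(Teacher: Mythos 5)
Your proof is correct and follows essentially the same route as the paper: the paper simply cites the classical fact (Rockafellar, Theorem~25.5) that a proper convex function is differentiable Lebesgue-a.e.\ on the interior of its effective domain, and your argument (local boundedness $\Rightarrow$ local Lipschitzness $\Rightarrow$ Rademacher) is precisely the standard proof of that cited theorem.
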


\begin{Lemma}\label{lemma:subdiff}
If $\itr\Omega\neq \emptyset$, then
\begin{align*}
    \partial u(y)=\cl \big(\conv A(y)\big)+\nn(y),\quad\forall y\in\Omega,
\end{align*}
where $A(y)$ is the set of all limits of sequences $\big(\nabla u(y_n)\big)_{n=1}^\infty$ with $\lim_{n\to\infty}y_n =y$ and $y_n\in D$ for all $n$.
\end{Lemma}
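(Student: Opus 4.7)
The plan is to establish the two inclusions separately, with the $(\supseteq)$ direction being essentially mechanical and the $(\subseteq)$ direction splitting into an interior case and a boundary case.

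For $(\supseteq)$, I would argue as follows. Given $w\in A(y)$, write $w=\lim\nabla u(y_n)$ with $y_n\to y$ and $y_n\in D$. Passing to the limit in the pointwise subgradient inequality $u(y')\geq u(y_n)+\nabla u(y_n)\cdot(y'-y_n)$, using the lower semicontinuity of $u$ at $y$ to handle $\liminf u(y_n)\geq u(y)$, one obtains $w\in\partial u(y)$. Since $\partial u(y)$ is itself closed and convex (an intersection of closed halfspaces), it contains $\cl(\conv A(y))$. Finally, for any $b\in\nn(y)$ and any $w\in\partial u(y)$, adding the inequality $b\cdot(y'-y)\leq 0$ valid on $\Omega$ to the subgradient inequality for $w$ shows $w+b\in\partial u(y)$, so the whole Minkowski sum lies in $\partial u(y)$.

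For $(\subseteq)$ with $y\in\itr\Omega$, the normal cone $\nn(y)$ is trivial, so the target reduces to $\partial u(y)\subseteq \cl(\conv A(y))=:B$. Since $u$ is locally Lipschitz near any interior point, the one-sided directional derivative satisfies $u'(y;v)=\max_{z\in\partial u(y)}z\cdot v$ for every $v\in\S^K$, so it is enough to prove $u'(y;v)\leq\sigma_B(v)$ for all $v$, where $\sigma_B$ is the support function of $B$. The idea is to approach this via a Fubini-plus-convexity argument: perturb $v$ to a generic direction $v'$ so that Lemma~\ref{lemma:diff_D} combined with Fubini's theorem ensures that $u$ is differentiable at almost every point of the segment $[y,y+s_0v']$. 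Along such a segment, the scalar function $h(s)=u(y+sv')$ is convex and its a.e.\ derivative equals $v'\cdot\nabla u(y+sv')$; right-continuity of $h'_+$ at $0$ combined with local Lipschitzness (so that $\nabla u$ stays bounded) lets one extract a sequence $s_n\to 0^+$ with $u$ differentiable at $y+s_nv'$ and $\nabla u(y+s_nv')\to w$ for some $w\in A(y)$ satisfying $v'\cdot w=u'(y;v')$. Passing $v'\to v$ using continuity of $u'(y;\cdot)$ on $\S^K$ yields $u'(y;v)\leq\sigma_B(v)$.

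For $(\subseteq)$ with $y\in\bd\Omega$, the plan is to reduce to the interior case by exploiting that $\itr\Omega\neq\emptyset$: pick $y_0\in\itr\Omega$ and use Lemma~\ref{lemma:interior} to produce $y_\alpha=(1-\alpha)y+\alpha y_0\in\itr\Omega$ for $\alpha\in(0,1]$, with $u(y_\alpha)\to u(y)$ by Lemma~\ref{lemma:cty}. For $z\in\partial u(y)$, one tests against directions $v$ in the tangent cone $T_\Omega(y)$ versus directions outside it. On the tangential part $T_\Omega(y)$, an approximation argument using the interior case applied at $y_\alpha$ (and letting $\alpha\to 0$) expresses the tangential component of $z$ as a limit of convex combinations of gradients at nearby differentiability points, putting it into $\cl(\conv A(y))$. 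The remaining part, which satisfies $b\cdot v\leq 0$ for all $v\in T_\Omega(y)$, lies in $\nn(y)$ by definition, giving the desired decomposition $z=a+b$.

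The main obstacle I expect is the boundary case. Two technical points compete: (i) subgradients $z_\alpha\in\partial u(y_\alpha)$ at interior approximants need not converge to $z$ (they may blow up in the outward normal direction), and (ii) the tangent cone $T_\Omega(y)$ at a boundary point of $\Omega$ need not be a subspace, so the ``decomposition into tangential and normal parts'' requires careful polar cone arguments rather than a linear projection. The Fubini-type choice of $v'$ in the interior case is also delicate, since one must simultaneously guarantee a.e.\ differentiability along the segment and preserve a strict separation inequality if one is arguing by contradiction, but this is a routine perturbation compared with the boundary decomposition.
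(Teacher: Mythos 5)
For context: the paper never proves this lemma itself --- immediately after the list of auxiliary lemmas it states that Lemma~\ref{lemma:subdiff} ``can be derived from \cite[Theorem 25.6]{rockafellar1970convex}'', so your attempt at a self-contained proof is by construction a different route. The parts you do carry out are sound: in the $(\supseteq)$ direction, lower semicontinuity gives $\liminf u(y_n)\geq u(y)$, which is exactly what is needed to pass to the limit in the subgradient inequalities, and adding an element of $\nn(y)$ preserves the subgradient inequality since $u\equiv+\infty$ off $\Omega$; in the interior case of $(\subseteq)$, the Fubini-plus-one-dimensional-convexity argument works because $u$ is locally Lipschitz near $y\in\itr\Omega$, so $\partial u(y)$ and hence $B=\cl(\conv A(y))\subseteq\partial u(y)$ are nonempty and compact, $\sigma_B$ is continuous, and the inequality $u'(y;v')\leq\sigma_B(v')$ proved for a dense set of directions extends to all $v$.

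The genuine gap is the boundary case of $(\subseteq)$, which is the only part of the lemma that goes beyond routine interior convex analysis, and your sketch contains no actual argument for it. The step ``express the tangential component of $z$ as a limit of convex combinations of gradients at $y_\alpha$'' presupposes a decomposition $z=a+b$ with $a$ tangential and $b\in\nn(y)$; but producing such a decomposition with $a\in\cl(\conv A(y))$ \emph{is} the assertion to be proved, and since the tangent cone at a boundary point of $\Omega$ is in general only a convex cone, there is no linear projection that hands it to you --- as you concede. Likewise, subgradients $z_\alpha\in\partial u(y_\alpha)$ at interior approximants need not converge to $z$ or carry any quantitative relation to it: the subdifferential map is not continuous, and near $\bd\Omega$ the gradients at differentiability points may blow up in the outward normal direction, which is precisely why the $\nn(y)$ summand appears in the formula. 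The two items you label ``main obstacles'' are therefore not residual technicalities; they are the proof, and they are left unresolved. The standard way to close this (essentially how \cite[Theorem 25.6]{rockafellar1970convex} proceeds) is not to decompose $z$ at all, but to compare support functions: identify $\sigma_{\partial u(y)}$ with the closure of $v\mapsto u'(y;v)$, show this closure equals $\sigma_{\cl(\conv A(y))}(v)$ for $v$ in the (closed) tangent cone to $\Omega$ at $y$ and $+\infty$ outside it, and then use $\sigma_{B+\nn(y)}=\sigma_B+\sigma_{\nn(y)}$ together with the fact that $\sigma_{\nn(y)}$ is the indicator of that tangent cone. Without an argument of this type (or an equivalent separation argument), the boundary case --- and hence the lemma --- remains unproved in your write-up.
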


\begin{Lemma}\label{lemma:grad_S^K_+}
If $\partial u(y)\cap \S^K_+\neq \emptyset$, then $u^{**}(y)=u(y)$.
\end{Lemma}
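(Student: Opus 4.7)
\smallskip

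The plan is to exploit the positivity $z\in\S^K_+$ of the subgradient directly in the variational formula defining $u^{**}$, combined with the standard fact that the subgradient inequality always gives equality in the Fenchel transform at a point of subdifferentiability.

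First I would record the trivial inequality $u^{**}\leq u$ on $\S^K_+$: for any fixed $y\in\S^K_+$ and any $z'\in\S^K_+$, the definition \eqref{eq:def_u*} gives $u^*(z')\geq z'\cdot y - u(y)$, hence $z'\cdot y - u^*(z')\leq u(y)$; taking the supremum over $z'\in\S^K_+$ yields $u^{**}(y)\leq u(y)$. This step uses nothing about $\partial u$.

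For the reverse inequality, pick $z\in\partial u(y)\cap \S^K_+$. The subgradient inequality \eqref{eq:def_subdiff} states $u(y')\geq u(y)+z\cdot(y'-y)$ for every $y'\in\S^K$; in particular, restricting to $y'\in\S^K_+$ (which is all that matters since $u\equiv +\infty$ outside $\S^K_+$) and rearranging gives
\begin{equation*}
y'\cdot z - u(y')\,\leq\, y\cdot z - u(y),\qquad \forall y'\in\S^K_+.
\end{equation*}
Taking the supremum over $y'\in\S^K_+$ on the left produces $u^*(z)\leq y\cdot z - u(y)$ (the reverse inequality $u^*(z)\geq y\cdot z-u(y)$ is immediate from \eqref{eq:def_u*}, so in fact equality holds, though we do not need this).

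Finally, since $z\in\S^K_+$, it is an admissible test point in the variational formula for $u^{**}(y)$, so
\begin{equation*}
u^{**}(y)\,\geq\, y\cdot z - u^*(z)\,\geq\, y\cdot z - \big(y\cdot z - u(y)\big)\,=\,u(y),
\end{equation*}
which combined with the first paragraph gives $u^{**}(y)=u(y)$. There is essentially no obstacle here: the whole content of the lemma is that the positivity constraint $z\in\S^K_+$ is exactly what is needed for $z$ to be a legitimate competitor in the $\S^K_+$-restricted supremum defining $u^{**}$; the usual Fenchel--Moreau argument on $\S^K$ would have worked with any $z\in\partial u(y)$, but since the biconjugate $u^{**}$ here is defined with the supremum running only over $\S^K_+$, we must verify this membership, and it is given by hypothesis.
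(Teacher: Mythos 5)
Your proof is correct and follows essentially the same route the paper has in mind: the paper simply cites \cite[Theorem 23.5]{rockafellar1970convex} for the Fenchel--Young equality $u(y)+u^*(z)=y\cdot z$ at a subgradient $z$, whereas you derive that inequality directly from \eqref{eq:def_subdiff}, and in both cases the only point specific to this setting is that $z\in\S^K_+$ is an admissible competitor in the $\S^K_+$-restricted supremum defining $u^{**}(y)$, which your hypothesis supplies.
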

\begin{Lemma}\label{lemma:supporting_hyperplane}
For every $x\in\S^K$, we have $u^{**}(x)=\sup L_{a,\nu}(x)$, where the supremum is taken over the set $\{L_{a,\nu}:\ a\in\S^K_+,\ \nu\in\R,\ L_{a,\nu}\leq u\}$.
\end{Lemma}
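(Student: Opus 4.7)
The plan is to establish the two inequalities by a direct unpacking of the definition of $u^{**}$ in \eqref{eq:def_u*}. The key observation is that the supremum defining $u^{**}$ is already taken over $y\in\S^K_+$, and each admissible $y$ gives rise to an affine minorant of $u$ with slope in $\S^K_+$; conversely every such minorant bounds $u^{**}$ from below. No deep convex analysis machinery (Lemmas~\ref{lemma:interior}--\ref{lemma:grad_S^K_+}) is needed for this step; it is essentially bookkeeping.

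First I would prove $\sup L_{a,\nu}(x)\leq u^{**}(x)$. Fix any admissible pair $(a,\nu)$, so $a\in\S^K_+$, $\nu\in\R$, and $L_{a,\nu}(z)=a\cdot z+\nu\leq u(z)$ for every $z\in\S^K_+$. Rearranging gives $\nu\leq u(z)-a\cdot z$ for all such $z$, and taking the infimum over $z\in\S^K_+$ yields
\begin{equation*}
\nu\ \leq\ \inf_{z\in\S^K_+}\bigl(u(z)-a\cdot z\bigr)\ =\ -\sup_{z\in\S^K_+}\bigl(a\cdot z-u(z)\bigr)\ =\ -u^*(a).
\end{equation*}
Hence $L_{a,\nu}(x)=a\cdot x+\nu\leq a\cdot x-u^*(a)\leq \sup_{y\in\S^K_+}\{y\cdot x-u^*(y)\}=u^{**}(x)$, which gives the first inequality.

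For the reverse inequality $u^{**}(x)\leq \sup L_{a,\nu}(x)$, I would argue pointwise in the supremum defining $u^{**}$. Pick any $a\in\S^K_+$. If $u^*(a)=+\infty$, then $a\cdot x-u^*(a)=-\infty$ and this $a$ contributes nothing. Otherwise $u^*(a)\in\R$, and setting $\nu:=-u^*(a)$ we get from the definition of $u^*$ that
\begin{equation*}
L_{a,\nu}(z)\ =\ a\cdot z-u^*(a)\ \leq\ u(z),\qquad\forall z\in\S^K_+,
\end{equation*}
so the pair $(a,\nu)$ is admissible. Therefore $a\cdot x-u^*(a)=L_{a,\nu}(x)\leq \sup L_{a,\nu}(x)$, and taking the supremum over $a\in\S^K_+$ completes the proof.

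The main (mild) obstacle to be aware of is the degenerate case in which $u^*\equiv +\infty$ on $\S^K_+$: then the admissible set $\{L_{a,\nu}\leq u\}$ could a priori be empty, and both sides should be $-\infty$. Since $u$ is proper, convex, l.s.c.\ and nondecreasing, the existence of at least one affine minorant with slope in $\S^K_+$ follows from a standard Hahn--Banach separation of the epigraph of $u$ from a point below it, combined with the fact that any supporting hyperplane to the nondecreasing function $u$ must have nonnegative slope (in the sense of Lemma~\ref{lemma:psd}). This ensures that the supremum on the right-hand side is not vacuously $-\infty$ when $u^{**}$ is finite, and the identity $u^{**}=\sup L_{a,\nu}$ holds in $(-\infty,+\infty]$ on all of $\S^K$.
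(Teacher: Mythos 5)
Your proof is correct and follows the route the paper itself indicates: the lemma is verified directly from the definitions of $u^{**}$ in \eqref{eq:def_u*} and of $\sup L_{a,\nu}$, with the two inequalities obtained exactly as you do. The closing Hahn--Banach digression is unnecessary, since whenever $u^{**}(x)>-\infty$ there is some $a\in\S^K_+$ with $u^*(a)<+\infty$ and your own second step already produces the admissible minorant $L_{a,-u^*(a)}$, while if $u^*\equiv+\infty$ on $\S^K_+$ both sides are $-\infty$ and the identity holds trivially.
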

Lemma~\ref{lemma:interior},  \ref{lemma:subdiff}, and \ref{lemma:grad_S^K_+} can be derived from \cite[Theorem 6.1, 25.6, and 23.5 ]{rockafellar1970convex}, respectively. Lemma~\ref{lemma:cty} is borrowed from \cite[Proposition 9.14]{bauschke2011convex}. The density claim in Lemma~\ref{lemma:diff_D} follows from \cite[Theorem 25.5]{rockafellar1970convex}. The idea to verify the boundedness assertion can be seen in the proof of \cite[Proposition 6.2.2 in Chapter D]{hiriart2012fundamentals}. Lemma~\ref{lemma:supporting_hyperplane} can be verified using the definitions of $u^{**}$ and $\sup L_{a,\nu}(\cdot)$.

\smallskip

In Section~\ref{section:nonempty_int}, we prove Lemma~\ref{prop:biconjugate_one_side} under an additional assumption that $\itr\Omega\neq \emptyset$. We consider the case $\itr\Omega=\emptyset$ in Section~\ref{section:empty_int}.

\subsection{Case 1: nonempty interior}\label{section:nonempty_int}
Assuming $\itr\Omega \neq \emptyset$, we want to show that the identity $u^{**}=u$ holds for all $x\in\S^K_+$. We proceed in steps and show this identity holds on $\cl\Omega$ and then on $\S^K_+$.

\subsubsection{Analysis on $\cl \Omega$} 
At every $x\in D$, due to the nondecreasingness of $u$, we have $a\cdot\nabla u(x)\geq 0$ for all $a\in\S^K_+$. Then, Lemma~\ref{lemma:psd} implies $\nabla u(x)\in\S^K_+$ at every $x\in D$. By Lemma~\ref{lemma:grad_S^K_+}, we conclude $u^{**}(x)=u(x)$ for all $x\in D$.

\smallskip

Now for each $x\in\cl\Omega$, since $\itr\Omega$ is convex and nonempty, by Lemma~\ref{lemma:diff_D} and an argument using Fubini's theorem, we can see that there is $x'\in\Omega$ such that $x_\alpha = (1-\alpha)x+\alpha x' \in D $ for every $\alpha\in(0,1)$. Since both $u^{**}$ and $u$ are convex and l.s.c., Lemma~\ref{lemma:cty} implies that $u^{**}(x)=u(x)$ for all $x\in\cl\Omega$.

\subsubsection{Analysis on $\S^K_+$}

Let $x \in \S^K_+\setminus \cl \Omega$. Hence, we have $u(x)=\infty$. Define
\begin{align*}
    \lambda'  = \sup\{\lambda\in[0,+\infty):\ \lambda x \in \cl\Omega\}.
\end{align*}
By $x\not\in\cl\Omega$, $0\in\cl\Omega$ and the convexity of $\cl\Omega$, we must have
\begin{align}\label{eq:lambda'<1}
    \lambda'<1.
\end{align}
Set $x'=\lambda' x$. It is clear that $x'\in\cl\Omega$ and satisfies \eqref{eq:case_2}. The definition of $\lambda'$ also ensures $x'\not\in\itr\Omega$. There are two cases, either $x'\in\Omega$ or not.

\smallskip

When $x'\notin\Omega$, by $u^{**}=u$ on $\cl\Omega$ and Lemma~\ref{lemma:supporting_hyperplane},
there is a sequence of affine functions $(L_{a_n,\nu_n})_{n=1}^\infty$ such that $a_n\in\S^K_+$, $u\geq L_{a_n,\nu_n}$ for all $n$ and $u(x')=\lim_{n\to\infty}L_{a_n,\nu_n}(x')=\infty$. By the definition of $x'$ and \eqref{eq:lambda'<1}, we can see \begin{align*}
    L_{a_n,\nu_n}(x)=L_{a_n,\nu_n}(x')+(1-\lambda')a_n\cdot x\geq L_{a_n,\nu_n}(x').
\end{align*}
Hence, we also have $u(x)=\lim_{n\to\infty}L_{a_n,\nu_n}(x)=\infty$. This together with Lemma~\ref{lemma:supporting_hyperplane} shows $u^{**}=u$ at this $x$.

\smallskip

Now, we turn to the case where $x'\in\Omega$. We need the next lemma.
\begin{Lemma}\label{lemma:outer_normal_x}
For every $x\in\bd\Omega$ satisfying
\begin{align}
    \lambda x \not\in \cl \quad\forall\lambda >1,\label{eq:case_2}
\end{align}
we have $\big(\nn(x)\cap\S^K_+\big)\setminus\{0\}\neq \emptyset$.
\end{Lemma}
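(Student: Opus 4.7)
The strategy is to work with the enlarged convex set $C' := C - \S^K_+$, where $C := \cl\Omega$. Since $C'$ is convex in all of $\S^K$ (not merely in $\S^K_+$) and is downward-closed in the matrix order, a supporting hyperplane to $C'$ at a boundary point will automatically produce a normal vector in the self-dual cone $\S^K_+$.

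The main delicate step is to verify that $C = \cl\Omega$ itself is downward-closed: $y\in C$ and $0\leq b\leq y$ imply $b\in C$. The naive approximation $b_n := b + (y_n - y)$ with $y_n\in\Omega$, $y_n\to y$, can fail to remain in $\S^K_+$ when $b\in\bd\S^K_+$. To circumvent this, I would use the Case~1 hypothesis $\itr\Omega\neq\emptyset$: pick $y^*\in\itr\Omega$, set $y_\delta := (1-\delta)y+\delta y^*$ and $b_\delta := (1-\delta)b$ for $\delta\in(0,1]$, and observe that Lemma~\ref{lemma:interior} gives $y_\delta\in\itr\Omega\subseteq\Omega$, while $y_\delta - b_\delta = (1-\delta)(y-b)+\delta y^*\in\S^K_+$. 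The nondecreasingness of $u$ then places $b_\delta\in\Omega$, and $\delta\to 0$ yields $b\in C$.

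With downward-closure of $C$ in hand, the hypothesis $\lambda x\notin C$ for all $\lambda>1$ transfers to $\lambda x\notin C'$: if $\lambda x = c - b$ with $c\in C$ and $b\in\S^K_+$, then $c\geq \lambda x\geq 0$ would force $\lambda x\in C$ by downward-closure, a contradiction. Since $x\in C\subseteq C'$ and $\itr C'$ is nonempty (it contains the open set $y^* - \S^K_{++}$), one concludes $x\in\bd C'$, and the supporting hyperplane theorem produces a nonzero $z\in\S^K$ with $z\cdot y\leq z\cdot x$ for every $y\in C'$.

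Two specializations of this inequality finish the argument. Taking $y = x - b$ for arbitrary $b\in\S^K_+$ (which lies in $C - \S^K_+ = C'$) yields $z\cdot b\geq 0$, and hence $z\in\S^K_+$ by Lemma~\ref{lemma:psd}. Taking $y\in\Omega\subseteq C\subseteq C'$ yields $z\cdot y\leq z\cdot x$ on $\Omega$, so $z\in\nn(x)$. Combining these, $z\in\big(\nn(x)\cap\S^K_+\big)\setminus\{0\}$, as required.
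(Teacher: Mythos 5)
Your proof is correct, and it takes a genuinely different route from the paper's. The paper's argument is local and approximation-based: it first shows that every ball around $x$ meets $\S^K_{++}\cap\bd\Omega$, then uses a sub-lemma asserting that for $y\in\S^K_{++}\cap\bd\Omega$ the normal cone $\nn(y)$ already lies in $\S^K_+$ (because one can move in all $\S^K_+$-directions from a nearby interior point and stay in $\Omega$), and finally extracts a convergent subsequence of unit normals $z_n\in\nn(x_n)\cap\S^K_+$ with $x_n\to x$. Your argument instead globalizes: you pass to the Minkowski difference $C' = \cl\Omega - \S^K_+$, prove that $\cl\Omega$ is downward-closed in the semidefinite order (the $\delta$-perturbation via Lemma~\ref{lemma:interior} correctly handles the boundary issue), transfer the hypothesis $\lambda x\notin\cl\Omega$ to $\lambda x\notin C'$, and then apply the supporting hyperplane theorem once to $C'$ at $x$. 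The self-duality of $\S^K_+$ then hands you $z\in\S^K_+$ automatically, and restricting to $y\in\Omega\subset C'$ gives $z\in\nn(x)$. This is cleaner: it needs no inner sub-lemma, no sequence, no compactness extraction, and it makes clear that the only structural inputs are the monotonicity of $u$, self-duality of the cone, and $\itr\Omega\neq\emptyset$. One small expository gap: when you assert $x\in\bd C'$, you should say explicitly that $x\notin\itr C'$ \emph{because} $\lambda x\notin C'$ for $\lambda>1$ (otherwise a small dilation of $x$ would remain in $\itr C'$); the ingredient is present but the deduction isn't spelled out.
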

Since $x'$ satisfies \eqref{eq:case_2}, this lemma implies that there is $z\in\nn(x')\cap \S^K_+$ with $z\neq 0$.
The definition \eqref{eq:def_normal_cone} yields
\begin{align}\label{eq:z_n(x')}
    z\cdot(y-x')\leq 0,\quad \forall y\in\Omega.
\end{align}
Since we clearly have $0\in\Omega$, we have $z\cdot x'\geq 0$. We claim that actually
\begin{align}\label{eq:zx'>0}
    z\cdot x'>0.
\end{align}
Otherwise, we have  $z\cdot x'=0$. Since there is $x_0 \in \itr\Omega\subset \S^K_{++} $, we can see that there is $\eps>0$ sufficiently small such that $x_0-\eps z \in \S^K_+$. The nondecreasingness of $u$ yields $\eps z\in\Omega$. Replacing $y$ by $\eps z$ in \eqref{eq:z_n(x')} and using $z\cdot x'=0$, we have $\eps |z|^2\leq 0$, contradicting $z\neq 0$. Hence, we have \eqref{eq:zx'>0}.

\smallskip

By $u^{**}=u$ on $\cl\Omega$ and Lemma~\ref{lemma:supporting_hyperplane}, we can find an affine function $L_{a,\nu}$ with $a\in\S^K_+$ such that $u\geq L_{a,\nu}$. Now, for each $\rho\geq 0$, define
\begin{align*}
    \cL_\rho = L_{a+\rho z,\ \nu-\rho z\cdot x'}.
\end{align*}
Due to \eqref{eq:z_n(x')}, we can see
\begin{align*}
    \cL_\rho(y)= L_{a,\nu}(y)+\rho z\cdot (y-x')\leq L_{a,\nu}(y)\leq u(y),\quad \forall y \in\Omega.
\end{align*}
Since $u=\infty$ outside $\Omega$, we thus have $\cL_\rho\leq u$. On the other hand, we can compute
\begin{align*}
    \cL_\rho(x)=L_{a,\nu}(x)+\rho z\cdot(x-x')= L_{a,\nu}(x) + \rho(\lambda'^{-1}-1)z\cdot x'.
\end{align*}
By \eqref{eq:lambda'<1} and \eqref{eq:zx'>0}, we have $\lim_{\rho\to\infty}\cL_\rho(x)=\infty=u(x)$. By Lemma~\ref{lemma:supporting_hyperplane}, this shows that $u^{**}=u$ holds at $x\in\S^K_+\setminus\cl\Omega$. Together with previous results, we conclude that $u^{**}=u$ holds on $\S^K_+$ under the assumption $\itr\Omega \neq \emptyset$.

\smallskip

To complete the proof of Lemma~\ref{prop:biconjugate_one_side} under the additional assumption $\itr\Omega\neq \emptyset$, it remains to prove Lemma~\ref{lemma:outer_normal_x}.

\begin{proof}[Proof of Lemma~\ref{lemma:outer_normal_x}]

Fix $x\in\Omega\setminus\itr\Omega$ satisfying \eqref{eq:case_2}.

\smallskip

Step 1. We show that for every Euclidean ball $B\subset \S^K$ centered at $x$, there is $\bar x\in \S^K_{++}\cap \bd \Omega$.
By \eqref{eq:case_2}, there is some $\lambda>1$ such that $x' = \lambda x \in B\setminus \cl\Omega$. By $\itr\Omega \neq \emptyset$ and Lemma~\ref{lemma:interior}, there is $x'' \in B\cap \itr \Omega\subset \S^K_{++}$. For $\rho\in[0,1]$, set
\begin{align*}
    x_\rho = \rho x' + (1-\rho)x''\in B.
\end{align*}
Set $\rho_0=\sup \{\rho\in[0,1]:\ x_\rho\in\itr\Omega\}$. We can see $x_{\rho_0}$ lies in the closure but not the interior of $\Omega$, and thus $x_{\rho_0}\in B\cap \bd \Omega $. In addition, since $x'\not \in \cl \Omega$, we must have $\rho_0<1$ and hence $x_{\rho_0}\in\S^K_{++}$ due to $x''\in\S^K_{++}$. We conclude that $x_{\rho_0}\in B\cap\S^K_{++}\cap\bd \Omega$ is the point $\bar x$ we want.

\smallskip

Step 2. By the construction above, we can find a sequence $(x_n)_{n=1}^\infty$ such that $x_n \in \S^K_{++}\cap \bd \Omega$ and $\lim_{n\to\infty}x_n =x$. 
We want to show $\nn(x_n)\subset \S^K_+$ using the following lemma.

\begin{Lemma}
If $y\in  \S^K_{++}\cap \bd \Omega$, then $\nn(y)\subset \S^K_+$.
\end{Lemma}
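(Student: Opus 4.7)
Plan. The target is $z \in \S^K_+$ for every $z \in \nn(y)$. By Lemma~\ref{lemma:psd}, this reduces to showing $z \cdot b \geq 0$ for every $b \in \S^K_+$, so the strategy is to fix such a $b$ and produce the inequality via a perturbation argument that exploits $y \in \S^K_{++}$ together with the nondecreasingness of $u$.

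First I would use $y \in \bd \Omega \subset \cl \Omega$ to select a sequence $(y_n)_{n \in \N} \subset \Omega$ with $y_n \to y$. Since $y \in \S^K_{++}$ and $\S^K_{++}$ is open in $\S^K$, the points $y_n$ lie in $\S^K_{++}$ for all large $n$, with smallest eigenvalue bounded below by a positive constant depending only on $y$ (say $\lambda_{\min}(y_n) \geq \lambda_{\min}(y)/2$). This lets me fix one sufficiently small $\eps > 0$, chosen from $y$ and $b$ alone, such that $y_n - \eps b \in \S^K_+$ holds uniformly for all large $n$. The difference $y_n - (y_n - \eps b) = \eps b$ lies in $\S^K_+$, so the nondecreasingness assumption on $u$ (in the sense of \eqref{eq:nondecreasing}) gives $u(y_n - \eps b) \leq u(y_n) < +\infty$, i.e., $y_n - \eps b \in \Omega$.

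Then, testing the definition \eqref{eq:def_normal_cone} of $\nn(y)$ against the admissible point $y_n - \eps b \in \Omega$ yields $z \cdot (y_n - \eps b - y) \leq 0$, which rearranges to $z \cdot (y_n - y) \leq \eps z \cdot b$. Sending $n \to \infty$ makes the left-hand side vanish, leaving $\eps z \cdot b \geq 0$, hence $z \cdot b \geq 0$. Since $b \in \S^K_+$ was arbitrary, Lemma~\ref{lemma:psd} concludes $z \in \S^K_+$.

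The main subtlety, and the reason care is needed, is that $y$ need not belong to $\Omega$; only $y \in \cl \Omega$ is automatic. This blocks the most direct approach of testing the normal-cone inequality at $y - \eps b$, even though $y \in \S^K_{++}$ would place such a perturbation in $\S^K_+$ for small $\eps$. Approximating by $y_n \in \Omega$ and leaning on nondecreasingness (which transports membership in $\Omega$ downward in the PSD order along $\eps b$) is precisely what allows the argument to go through uniformly in $n$, and it is the only place where the hypothesis $y \in \S^K_{++}$ (rather than merely $y \in \S^K_+$) is essential.
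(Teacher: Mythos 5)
Your proof is correct and takes essentially the same approach as the paper's: approximate $y$ by points of $\Omega$, perturb downward by a small PSD increment (which stays in $\Omega$ by nondecreasingness, uniformly thanks to $y\in\S^K_{++}$), test the normal-cone inequality, and pass to the limit. The only cosmetic difference is that you fix $b$ up front and approximate by a sequence in $\Omega$, whereas the paper approximates by $y_\eps\in\itr\Omega$ via Lemma~\ref{lemma:interior} and varies the perturbation $a$ with $|a|\leq\delta$ before sending $\eps\to 0$; both yield the same conclusion by the same mechanism.
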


\begin{proof}
Since $y\in\bd\Omega$, using $\itr\Omega \neq \emptyset$ and Lemma~\ref{lemma:interior}, we can find $y_\eps \in\itr\Omega$ such that $|y_\eps - y|<\eps$ for each $\eps>0$. By this and $y\in \S^K_{++}$, there are $\eps_0,\delta_0>0$ such that
\begin{align*}
    y_\eps-\delta_0I\in\S^K_+
,\quad \eps\in(0,\eps_0).
\end{align*}
This further implies that there is $\delta>0$ such that
\begin{align*}
    y_\eps -a \in \S^K_+,\quad \forall \eps\in(0,\eps_0)\quad, \forall a\in\S^K_+ \text{ satisfying } |a|\leq \delta.
\end{align*}
Since $u$ is nondecreasing and $y_\eps\in\Omega$, we have $y_\eps -a\in\Omega$ for any $a$ described above. Let $z\in\nn(y)$. The definition \eqref{eq:def_normal_cone} yields $z\cdot(y_\eps -a -y)\leq 0$ and thus
\begin{align*}
    z\cdot a \geq -|z|\eps.
\end{align*}
Sending $\eps\to0$ and varying $a$, we conclude using Lemma~\ref{lemma:psd} that $z\in \S^K_+$.

\end{proof}

This lemma immediately implies that $\nn(x_n)\subset \S^K_+$. For each $n$, pick $z_n \in \nn(x_n)\cap \S^K_+$ with $|z_n|=1$. By extracting a subsequence, we may assume $\lim_{n\to\infty}z_n = z$ for some $z\in\S^K_+$ satisfying $|z|=1$. Since $z_n\in\nn(x_n)$, we have
\begin{align*}
    z_n\cdot(y-x_n)\leq 0,\quad \forall y\in\S^K.
\end{align*}
Set $n\to\infty$, recall that $\lim_{n\to\infty}x_n =x$, and we obtain $z\cdot(y-x)\leq 0$ for all $y\in\S^K$. 
This proves Lemma~\ref{lemma:outer_normal_x}.

\end{proof}

\subsection{Case 2: empty interior}\label{section:empty_int}
To complete the proof of Lemma~\ref{prop:biconjugate_one_side}, let us investigate the situation where $\itr\Omega = \emptyset$.
The case $\Omega = \{0\}$ is easy to handle. So, we assume $\itr\Omega =\emptyset$ and $\Omega\setminus\{0\}\neq \emptyset$. Set
\begin{align}\label{eq:def_J_rank}
    J=\max \{\rank(x):\ x\in\Omega\},
\end{align}
where $\rank(x)$ is the rank of the matrix $x$. By $\Omega\setminus\{0\}\neq \emptyset$, we have $J\geq 1$.

\medskip

Step 1. We show $J<K$. Otherwise, there is $x\in\Omega$ with $\rank(x)=K$. Hence, we have $x\in \S^K_{++}$. Therefore, there is $\delta>0$ such that $x-y\in\S^K_{++}$, for all $y\in\S^K_+$ with $|y|\leq \delta$. This contradicts the assumption that $\itr\Omega = \emptyset$.

\smallskip 

For each $n\in\N$, we denote the $n\times n$ zero matrix by $\zero_n$. Fix any $x\in\Omega$ with $\rank(x)=J$. Without loss of generality, by an orthogonal transformation, we may assume $x = \diag(\lambda_1,\lambda_2,\dots,\lambda_J,\zero_{K-J})$, where $\lambda_j>0$ for all $1\leq j\leq J$.

\medskip

Step 2. We show that for every $y\in\Omega$, there is $y^\circ \in \S^J_+$ such that
\begin{align}\label{eq:y_diag_form}
    y=\diag(y^\circ, \zero_{K-J}).
\end{align}
Otherwise, there is $y\in \Omega$ with $y_{ij}\neq 0$ for some $i>J$ or $j>J$. Since $y\in\S^K_+$ is positive semidefinite, we must have $y_{ii}>0$ for some $i>J$. By reordering, we assume $i=J+1$. Note that this reordering preserves $x$. We want to show $\rank(x+y)>J$. Let $\hat y = (y_{ij})_{1\leq i,j\leq J+1}\in\S^{J+1}_+$ be a portion of $y$, and $\hat x$ be similarly defined. It suffices to show $\rank(\hat x + \hat y) = J+1$. We further reduce this to verifying $\hat x + \hat y\in \S^{J+1}_{++}$ and thus showing
\begin{align}\label{eq:v^T(x+y)v>0}
    v^\intercal (\hat x +\hat y)v >0
\end{align}
for all $v\in\R^{J+1}\setminus \{0\}$.

\smallskip

First, we consider the case where $v_j\neq 0$ for some $1\leq j\leq J$. Since $\hat x = \diag(\lambda_1,\dots,\lambda_J,0)$ and each $\lambda_j$ is positive, we have $v^\intercal \hat x v = \sum_{j=1}^J\lambda_jv_j^2>0$, verifying \eqref{eq:v^T(x+y)v>0}. Now, suppose $v_j=0$ for all $1\leq j\leq J$. Due to $v\in\R^{J+1}\setminus \{0\}$, we must have $v_{J+1}\neq 0$. Since $y_{J+1,J+1}>0$, we obtain $v^\intercal \hat y v= y_{J+1,J+1}v_{J+1}^2>0$. In conclusion, \eqref{eq:v^T(x+y)v>0} holds.

\smallskip

Therefore, $\rank(\hat x +\hat y)=J+1$, and thus $\rank(x+y)>J$. By the convexity of $\Omega$, we see that $\frac{1}{2}(x+y)\in \Omega$. But this contradicts \eqref{eq:def_J_rank}. Hence, by contradiction, $y$ is of the form \eqref{eq:y_diag_form} for all $y\in\Omega$.

\medskip

Step 3. We apply the result in the previous section. Define
\begin{align*}
    \cC = \{\diag(y^\circ,\zero_{K-J}):\ y^\circ \in \S^J_+\}\subset \S^K_+.
\end{align*}
By the result from Step 2, we have $\Omega \subset \cC$. Identifying $\cC$ with $\S^J_+$, we can view $u$ as a map from $\S^J_+$ to $(-\infty,\infty]$. By \eqref{eq:def_J_rank}, the interior of $\Omega$ relative to $\S^J_+$ is nonempty. Hence, applying the result for case with nonempty interior, comparing with $u^{**}=u$, we have
\begin{align}\label{eq:holds_on_cC}
    u(x) = \sup_{z\in\cC}\{z\cdot x - u^*(z)\},\quad \forall x\in\cC.
\end{align}
Since $u\geq u^{**}$, we have $u^{**}=u$ on $\cC$.

\medskip

Step 4. To complete the proof, we show that $u^{**}=u$ holds on $\S^K_+\setminus\cC$. Let us set $z=\diag\{\zero_{J},I_{K-J}\}$ where $I_{K-J}$ is the $(K-J)\times(K-J)$ identity matrix. Fix any $x\in\S^K_+\setminus \cC$. Due to $x\not\in\cC$, there is some $i>J$ or $j>J$ such that $x_{ij}\neq 0$. Since $x$ is positive semidefinite, we must have $x_{ii}>0$ for some $i>J$. Therefore, we get
\begin{align}\label{eq:z_cdot_x>0}
    z\cdot x>0.
\end{align}
By \eqref{eq:holds_on_cC}, there is an affine function $L_{a,\nu}$ with $a\in\cC\subset\S^K_+$ such that $u\geq L_{a,\nu}$ on $\cC$. Now, for every $\rho\geq 0$, we define
\begin{align*}
    \cL_\rho = L_{a+\rho z,\nu}.
\end{align*}
By the definition of $z$, we can compute
\begin{align*}
    \cL_\rho(y) = L_{a,\nu}(y)+z\cdot y=L_{a,\nu}(y)\leq u(y),\quad \forall y\in\cC.
\end{align*}
Since $u=\infty$ outside $\cC$, we then get $\cL_\rho\leq u$. On the other hand, \eqref{eq:z_cdot_x>0} implies that
\begin{align*}
    \cL_\rho(x)= L_{a,\nu}(y)+\rho z\cdot y
\end{align*}
converges to $\infty$ as $\rho\to\infty$. Then, Lemma~\ref{lemma:grad_S^K_+} implies $u^{**}=u$ at $x\in\S^K_+\setminus \cC$.

\section{Concentration in the special case}

In this appendix, we prove a concentration result assuming $X$ has i.i.d.\ and bounded entries. The following lemma works for any fixed interaction matrix $A\in\R^{K^p\times L}$ in \eqref{eq:observ}. Recall the definition of $\cK_{M,N}$ in \eqref{eq:def_K}.

\begin{Lemma}\label{lemma:concentration}
Assume that $X$ consists of i.i.d.\ entries and $|X_{ij}|\leq 1$ for all $i$ and $j$. Then, there is $C>0$ such that the following holds for all $M\geq1$ and $n\in \N$,
\begin{align*}
    \cK_{M,N} \leq CN^{-\frac{1}{2}}\big( M+\sqrt{\log N}\big).
\end{align*}
\end{Lemma}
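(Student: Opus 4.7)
The argument has the standard pointwise-then-uniform structure for concentration of free energies.

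For a fixed $(t,h)\in[0,M]\times\SM$, I would first establish the pointwise estimate
\begin{align*}
\P\bigl(|F_N(t,h) - \bF_N(t,h)|>s\bigr)\leq C\exp(-cNs^2/(1+M^2)),
\end{align*}
by decomposing the randomness into the Gaussian pair $(W,Z)$ and the bounded i.i.d.\ rows of $X$. Conditioning first on $X$, the Hamiltonian \eqref{eq:H_N_enriched} is linear in $(W,Z)$, and using the support bound \eqref{eq:support} one checks that $(W,Z)\mapsto F_N$ is Lipschitz with constant of order $\sqrt{(t+|h|)/N}$; Gaussian concentration handles this piece. Next, $g(X)=\E_{W,Z}[F_N\mid X]$ is a function of the i.i.d.\ bounded rows of $X$; differentiating $g$ under the Gibbs expectation (as in the derivations of \eqref{eq:dtbFN}--\eqref{eq:grad_bFN}) and invoking the support bound once more produces bounded-differences constants under McDiarmid's inequality, which closes the pointwise estimate.

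To upgrade to uniform concentration, I would take a $\delta$-net $\mathcal{N}_\delta\subset[0,M]\times\SM$ with $|\mathcal{N}_\delta|\leq (CM/\delta)^{\gamma+1}$, $\gamma=K(K+1)/2$. A union bound and tail integration then yield a bound on $\bigl(\E\max_{\mathcal{N}_\delta}|F_N-\bF_N|^2\bigr)^{1/2}$ of the correct order $N^{-1/2}(M+\sqrt{\log N})$. The oscillation of $\bF_N$ on any $\delta$-ball is at most $C\delta$ by \eqref{eq:1st_der_bF}. For $F_N$, the oscillation is controlled using \eqref{eq:1st_der_F} after taking $L^2(\P)$-norms and applying standard Gaussian tensor operator norm bounds $\E\|WA^\intercal\|^2\lesssim N^p$ and $\E|Z|^2\lesssim NK$; choosing $\delta$ to be a suitable small polynomial in $N$ makes this oscillation error subdominant.

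\textbf{Main obstacle.} The delicate point is the blow-up of the $h$-Lipschitz constant of $F_N$ in \eqref{eq:1st_der_F} through the factor $|h^{-1}|^{1/2}$ as $h$ approaches $\bd\S^K_+$, which prevents a straightforward Lipschitz-on-the-ball argument on all of $\SM$. I would handle this by splitting $\SM$ into the interior slab $\SM\cap(\eta I+\S^K_+)$ (on which $|h^{-1}|\leq C/\eta$ and the oscillation bound is directly valid) and a thin boundary layer; on the latter, rather than appealing to the divergent Lipschitz constant, I would compare $F_N(t,h)$ to $F_N(t,\hat h)$ at a nearest interior point $\hat h$ using the continuity of $F_N$ down to $h=0$ (via dominated convergence in \eqref{eq:def_F_N}) together with the uniform Lipschitzness of $\bF_N$. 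Optimizing $\delta$ and $\eta$ as small polynomial powers of $N$ then produces the claimed bound $\cK_{M,N}\leq CN^{-1/2}(M+\sqrt{\log N})$.
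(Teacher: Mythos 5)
Your high-level structure---a pointwise sub-Gaussian concentration estimate at each fixed $(t,h)$ followed by a union bound over a polynomial-size net---matches the paper's, and your pointwise decomposition (Gaussian concentration over $(W,Z)$ conditionally on $X$, then bounded-differences concentration over the i.i.d.\ rows of $X$) is a valid variant. The paper simply conditions in the opposite order, concentrating $F_N$ around $\E_X F_N$ over the bounded rows first and then applying Gaussian concentration to $\E_X F_N$ as a function of $(W,Z)$; either order gives the rate $M/\sqrt{N}$.

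The genuine gap is in your boundary-layer step. Bounding $|F_N(t,h)-F_N(t,\hat h)|$ by ``continuity via dominated convergence'' gives no rate, and the Lipschitz constant of $\bF_N$ controls oscillations of $\bF_N$, not of the random quantity $F_N$; so as written you have no quantitative modulus of continuity for $F_N$ on the thin layer, which is exactly what the sup requires. The paper closes this precisely where you flag the obstacle, but by noting that the singular factors $|h^{-1}|^{1/2}$ and $t^{-1/2}$ in \eqref{eq:1st_der_F} are integrable: integrating $|\nabla F_N|$ along the segment from $h+\eta I$ down to $h$ gives $|F_N(t,h)-F_N(t,h+\eta I)|\le C(1+N^{-1/2}|Z|)\eta^{1/2}$, and likewise in $t$, yielding a H\"older-$\tfrac12$ increment bound with a sub-Gaussian random prefactor that is valid all the way up to $\bd\S^K_+$ and $t=0$. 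With that bound, no boundary-layer split is needed at all. Two smaller points: you never address the $t^{-1/2}$ singularity, which needs the same integrability argument; and $\E\|WA^\intercal\|^2\lesssim N^p$ is far weaker than the sharp $\E\|WA^\intercal\|^2\lesssim N$ the paper obtains via a covering-net bound on the Gaussian tensor norm, though since this only enters logarithmically through the net size it would not by itself spoil the final rate.
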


\subsection{Proof of Lemma~\ref{lemma:concentration}}

The plan is to first obtain an  estimate of $\E e^{\lambda^2 N|F_N-\bF_N|^2}$ for small $\lambda>0$ pointwise at each $(t,h)\in [0,M]\times \SM$. Then, we use an $\eps$-net argument to bound $\E \sup_{(t,h)\in[0,M]\times \SM} e^{\lambda^2 N |F_N-\bF_N|}$. The desired result follows from Jensen's inequality.

\subsubsection{Pointwise estimate}

Let $(t,h)\in [0,M]\times \SM$. 
Denote by $G=(W,Z)$ the Gaussian vector consisting of all Gaussian random variables in $F_N$. We also write $\E_G$, $\E_X$ as the expectation integrating over $G$, $X$, respectively. Let $\lambda>0$ be chosen later. Using the Cauchy--Schwarz inequality, we have
\begin{align}
    \E e^{\lambda |F_N-\bF_N|} &\leq \E\Big( e^{\lambda |F_N-\E_XF_N|}e^{\lambda |\E_X F_N-\E_{X,G}F_N|}\Big)\nonumber\\
    &= \Big(\E e^{2\lambda |F_N-\E_XF_N|}\Big)^\frac{1}{2}\Big(\E e^{2\lambda |\E_X F_N-\E_{X,G}F_N|}\Big)^\frac{1}{2}.\label{eq:exp_moment_F_N}
\end{align}

To treat the last term, we will use the Gaussian concentration inequality. 
Let us use the multi-index notation \eqref{eq:multi-index}. By \eqref{eq:H_N_enriched} and \eqref{eq:def_F_N}, we can compute
\begin{align*}
    \partial_{W_{\ii}}F_N = \frac{1}{N}\sqrt{\frac{2t}{N^{p-1}}}\la \tx_\ii\ra ,\quad 
    \partial_{Z_{ij}}F_N = \frac{1}{N}\sum_{k=1}^K \big(\sqrt{2h}\big)_{kj}\la x_{ik}\ra.
\end{align*}
Here $\tx$ is defined in \eqref{eq:tx}. Therefore, by \eqref{eq:support}, we have
\begin{align*}
    |\nabla_G F_N|^2& = \sum_{\ii}|\partial_{W_{\ii}}F_N|^2 + \sum_{i=1}^N\sum_{j=1}^K|\partial_{Z_{ij}}F_N|^2\\
    &=\frac{2t}{N^{p+1}}\la \tx\cdot \tx'\ra  + \frac{2}{N^2}h\cdot \la x^\intercal x'\ra \leq CMN^{-1}.
\end{align*}
Invoking \cite[Theorem 5.5]{boucheron2013concentration}, we obtain
\begin{align}\label{eq:G_exp}
    \E_G e^{\lambda |\E_{X} F_N-\E_{X,G}F_N|} \leq e^{C\lambda^2MN^{-1}}.
\end{align}

Then, we treat the first two terms in \eqref{eq:exp_moment_F_N}. Let us first compute $\partial_{X_{ij}}F_N$. By \eqref{eq:def_F_N}, we can compute
\begin{align*}
    \partial_{X_{ij}}F_N &= \frac{1}{N}\bigg\la \frac{2t}{N^{p}}\partial_{X_{ij}}\big(\tx \cdot \tilde X\big) +2 \partial_{X_{ij}}\Big(h\cdot \big(x^\intercal X\big)\Big) \bigg\ra\nonumber.
\end{align*}
Due to the boundedness assumption $|X_{i,\cdot}|\leq \sqrt{K}$ (and thus $|x_{i,\cdot}|\leq \sqrt{K}$ under the distribution $\la\, \cdot\, \ra$), we can verify 
\begin{align*}
    \big|\nabla_{X_{i,\cdot}}F_N\big| \leq CMN^{-1}.
\end{align*}
Using the boundedness again and \cite[Theorem 6.2]{boucheron2013concentration} (see the penultimate display in its proof), we obtain
\begin{align}\label{eq:X_exp}
\E_{X}e^{\lambda |F_N - \E_{X} F_N|}\leq Ce^{C\lambda^2M^2N^{-1}}.
\end{align}

\smallskip

In conclusion, \eqref{eq:exp_moment_F_N}, \eqref{eq:G_exp} and \eqref{eq:X_exp}, with $\lambda$ replaced by $\lambda \sqrt{N}$, yield
\begin{align*}
    \E e^{\lambda \sqrt{N}|F_N-\bF_N|} \leq Ce^{C\lambda^2M^2}.
\end{align*}
Then, \cite[Proposition 2.5.2]{vershynin2018high} implies that, for $\lambda$ sufficiently small,
\begin{align}\label{eq:F_N_ptw_concentration}
    \E e^{\lambda^2 N|F_N-\bF_N|^2}\leq Ce^{C\lambda^2 M^2}.
\end{align}

\subsubsection{Application of an $\eps$-net argument}

The goal is to upgrade \eqref{eq:F_N_ptw_concentration} to a bound on  $\E\sup_{(t,h)\in[0,M]\times\SM}e^{\lambda^2N|F_N-\bF_N|^2}$. The estimates \eqref{eq:1st_der_bF} and \eqref{eq:1st_der_F} imply that, for $|t-t'|+|h-h'|\leq 1$,
\begin{align*}
    |F_N(t,h)- F_N(t',h')|\leq C\Big(1+N^{-\frac{1}{2}}\big(\|WA^\intercal\|+|Z|\big)\Big)\big(|t-t'|^\frac{1}{2}+|h-h'|^\frac{1}{2}\big).
\end{align*}
For $\eps\in(0,1]$, viewing $\SM$ as a subset of $\R^{K(K+1)/2}$, we introduce the $\eps $-net
\begin{align*}
    A_\eps = \{\eps,2\eps, 3\eps\dots\}^{1+K(K+1)/2}\cap \Big([0,M]\times \SM\Big).
\end{align*}
Hence, for $\lambda$ small, we have
\begin{align}
    &\E\sup_{(t,h)\in[0,M]\times \SM}e^{\lambda^2 N|F_N-\bF_N|^2}\nonumber\\
    &\leq \E \exp \Big( C\lambda^2\eps\big(\sqrt{N}+\|WA^\intercal\|+|Z|\big)^2\Big)\sup_{(t,h)\in A_\eps} e^{\lambda^2N|F_N-\bF_N|^2}\nonumber\\
    &\leq\bigg(\E \exp\Big( C\lambda^2 \eps\big(\sqrt{N}+\|WA^\intercal\|+|Z|\big)^2\Big)\bigg)^\frac{1}{2} \bigg(\E\sup_{(t,h)\in A_\eps}e^{2\lambda^2N|F_N-\bF_N|^2}\bigg)^\frac{1}{2}\label{eq:exp_sup_F_N}
\end{align}
where we used the Cauchy--Schwarz inequality in the second inequality.
Since $|A_\eps|\leq (M/\eps)^{1+K(K+1)/2}$,  using the union bound and \eqref{eq:F_N_ptw_concentration}, we have,
\begin{align}\label{eq:exp_sup_A_eps_F_N}
    \bigg(\E\sup_{(t,h)\in A_\eps}e^{2\lambda^2N|F_N-\bF_N|^2}\bigg)^\frac{1}{2} \leq C(M/\eps)^Ce^{C\lambda^2 M^2}, \quad \lambda \in \R.
\end{align}

Set $\eps = C^{-1}N^{-1}$ in \eqref{eq:exp_sup_F_N} with $C$ therein, and use \eqref{eq:exp_sup_A_eps_F_N} to see
\begin{align*}
\begin{split}
    &\E\sup_{(t,h)\in[0,M]\times\SM}e^{\lambda^2 N|F_N-\bF_N|^2}\\
    &\leq C (MN)^C e^{C\lambda^2 M^2} \bigg[\E \exp\Big(\lambda^2\big(1+N^{-\frac{1}{2}}(\|WA^\intercal\|+|Z|)\big)^2\Big)\bigg]^\frac{1}{2}.
\end{split}
\end{align*}
We claim that, for small $\lambda>0$, 
\begin{align}\label{eq:claim_concentration}
    \E \exp\Big( \lambda^2\big(1+N^{-\frac{1}{2}}(\|WA^\intercal\|+|Z|)\big)^2\Big) \leq C.
\end{align}
This immediately gives
\begin{align*}
    \E\sup_{(t,h)\in[0,M]\times\SM}e^{\lambda^2 n |F_N-\bF_N|^2}\leq C(MN)^Ce^{C\lambda^2 M^2} .
\end{align*}
Finally, using Jensen's inequality, we conclude that
\begin{align*}
    \E\sup_{(t,h)\in[0,M]\times\SM}|F_N-\bF_N|^2&\leq \lambda^{-2}N^{-1}\log\bigg( \E \sup_{(t,h)\in[0,M]\times\SM}e^{\lambda^2N|F_N - \bF_N|^2}\bigg)\\
    &\leq CN^{-1}(M^2 + \log N),
\end{align*}
as desired. The proof will be complete once \eqref{eq:claim_concentration} is verified.

\subsubsection{Proof of \eqref{eq:claim_concentration}}

We want to bound exponential moments of $\|WA^\intercal\|^2$ and $|Z|^2$. Using the fact that $Z$ is standard Gaussian in $\R^N$, we have, for $\lambda$ small,
\begin{align}\label{eq:exp_|Z|}
    \E e^{\lambda^2 N^{-1}|Z|^2} \leq C.
\end{align}

\smallskip

Now, we turn to bound $\E e^{\lambda^2 N^{-1}\|WA^\intercal\|^2}$. 
For each $\eps>0$, there is a finite set $B\subset \mathbb{S}^{NK-1}$ such that for each $y\in \mathbb{S}^{NK-1}$ there is $z\in B$ satisfying $|y-z|\leq \eps$. In addition, the size of $B$ is bounded by $a^{NK}$ for some constant $a>0$ depending only on $\eps$. The construction of $B$ is classical and can be seen, for instance, in \cite[Corollary 4.2.13]{vershynin2018high}. Using the property of $B$, we can see that for each $(y_1,\, y_2,\, \dots\, y_p)\in (\mathbb{S}^{NK-1})^p$ there is $(z_1,\, z_2,\, \dots\, z_p)\in B^p$ such that
\begin{align*}
    \Big|(WA^\intercal)\cdot (y_1\otimes y_2\otimes \cdots \otimes y_p) - (WA^\intercal)\cdot (z_1\otimes z_2\otimes \cdots \otimes z_p)\Big|\leq p\eps \|WA^\intercal\|.
\end{align*}
By this and fixing $\eps = \frac{1}{2p}$, from the definition \eqref{eq:||w||}, we obtain
\begin{align*}
    \|WA^\intercal \|\leq 2\sup_{(z_1,\, z_2,\, \dots\, z_p)\in B^p}(WA^\intercal)\cdot (z_1\otimes z_2\otimes \cdots \otimes z_p).
\end{align*}
Note that $(WA^\intercal)\cdot (z_1\otimes z_2\otimes \cdots \otimes z_p)$ is a centered Gaussian with variance bounded by a constant $C$ depending only on $A$. Therefore, there is $\gamma>0$ such that
\begin{align*}
    \P\Big\{(WA^\intercal)\cdot (z_1\otimes z_2\otimes \cdots \otimes z_p)\geq t\Big\}\leq 2e^{-\gamma t^2 }.
\end{align*}
Combine the above two displays and apply the union bound to see
\begin{align*}
    \P\big\{e^{\lambda^2N^{-1}\|WA^\intercal \|^2}\geq t\big\}\leq 2\bigg(\frac{a^{pK}}{t^c}\bigg)^N
\end{align*}
for some constant $c>0$ that absorbs $\lambda$ and $\gamma$. Writing $b=a^\frac{pK}{c}$, we have, for $N$ large,
\begin{align*}
    \E e^{\lambda^2 N^{-1}\|WA^\intercal\|^2} = \int_0^\infty \P\{e^{\lambda^2 N^{-1}\|WA^\intercal\|^2}\geq t\}\d t\leq b + \int_b^\infty 2\bigg(\frac{b}{t}\bigg)^{cN}\d t=b+\frac{2b}{cN-1},
\end{align*}
which is bounded uniformly for large $N$.
This and \eqref{eq:exp_|Z|} imply \eqref{eq:claim_concentration}.

\newcommand{\noop}[1]{} \def\cprime{$'$}

\bibliographystyle{abbrv}

\end{document}